\newtheorem{theorem}{Theorem}[section]
\newtheorem{proposition}[theorem]{Proposition}
\newtheorem{remark}[theorem]{Remark}
\newtheorem{notation}[theorem]{Notation}
\newcommand{\R}{\mathbb{R}}
 \date{}
\title{Multi-species viscous models for tissue growth: incompressible limit and qualitative behaviour}
\author{
  Pierre Degond \footnote{Université de Toulouse, CNRS, Institut de Mathématiques de Toulouse, UMR 5219, F–31062 Toulouse Cedex 9, France. Email addresses: pierre.degond@math.univ-toulouse.fr, michele.romanos@math.univ-toulouse.fr, ariane.trescases@math.univ-toulouse.fr}
  \and
  Sophie Hecht \footnote{Sorbonne Université, CNRS,  Université Paris Cité, Laboratoire Jacques-Louis Lions, UMR 7598, F-75005 Paris, France. Email address: sophie.hecht@sorbonne-universite.fr}
  \and
  Michèle Romanos \footnotemark[1] 
  \and
  Ariane Trescases \footnotemark[1]
}
\begin{document}
\maketitle

\abstract{We introduce two 2D mechanical models reproducing the evolution of two viscous tissues in contact. Their main property is to model the swirling cell motions while keeping the tissues segregated, as observed during vertebrate embryo elongation. Segregation is encoded differently in the two models: by passive or active segregation (based on a mechanical repulsion pressure). We compute the incompressible limits of the two models, and obtain strictly segregated solutions. The two models thus obtained are compared. A striking feature in the active segregation model is the persistence of the repulsion pressure at the limit: a \emph{ghost effect} is discussed and confronted to the biological data. Thanks to a transmission problem formulation at the incompressible limit, we show a pressure jump at the tissues' boundaries.}\\

\paragraph{Keywords.} Modelling, Tissue growth, Cross diffusion, Brinkman law, Incompressible limit, Free boundary problems, Transmission problems, Developmental biology.
\paragraph{AMS subject classification.} 2020 Mathematics Subject Classification 35Q35, 35Q92, 35R35, 92-10 (primary), 92C15.
\tableofcontents

%%%%%%%%%%%%%%%%%%%%%%%%%%%%%%%%%%%%%%%%%%%%%%%%%%%%%%%%%%%
%%%%%%%%%%%%%%%%%%%%%%%%%%%%%%%%%%%%%%%%%%%%%%%%%%%%%%%%%%%
%%%%%%%%%%%%%%%%%%%%%%%%%%%%%%%%%%%%%%%%%%%%%%%%%%%%%%%%%%%

\section{Introduction}\label{intro}
%%%%%%%%%%%%%%%%%%%%%%%%%%%%%%%%%%%%%%%%%%%%%%%%%%%%%%%%%%%
%%%%%%%%%%%%%%%%%%%%%%%%%%%%%%%%%%%%%%%%%%%%%%%%%%%%%%%%%%%
%%%%%%%%%%%%%%%%%%%%%%%%%%%%%%%%%%%%%%%%%%%%%%%%%%%%%%%%%%%

During morphogenesis, biological shapes emerge as a result of cell and tissue dynamic interactions. For instance, during vertebrate morphogenesis the embryonic body shape is extending along the head-to-tail axis. Posterior tissues are organized in a very specific manner: the axially located neural tube (NT), which will form the future spinal cord, is surrounded by two stripes of presomitic mesoderm (PSM), which will give rise to muscle and vertebrae (Figure~\ref{sketch}\protect \subref{emb}). Thanks to live imaging and microscopy techniques, the cell and tissue dynamics of vertebrate development have been precisely described \citep{B2017, eli}. Two striking phenomena are observed during embryo development.

The first one is the segregation of the NT and the PSM cells all along the axis, with the exception of a localized mixing area close to the progenitor zone (PZ). Cells from the PZ (depicted in yellow in Figure~\ref{sketch}\protect \subref{emb}) migrate into the NT and the PSM, thus contributing to the tissues elongation. The segregation between the NT and the PSM is maintained throughout embryonic growth.

The second interesting observation is the appearance of swirling motions or vortices within the PSM, due to cell movements.  Distinct medio-lateral cell movements are seen in the posterior zone of the embryo corresponding to the exit of progenitor cells into the PSM and the NT. Anteriorly, cell movements exhibit more complicated patterns. Globally we see latero-medial cell vortices forming, but a closer look at the anterior region reveals some ajacent areas displaying medio-lateral and latero-medial vortices \citep{B2017}. The cell rotational movements are schematically represented in Figure \ref{sketch}\protect \subref{sketchrot}.

Based on these observations, our aim is to build and study mathematical models that allow to recover and explain these two phenomena in growing tissues.

\begin{figure}[h]
\centering
\begin{tabular}{cc}
\subfloat[Quail embryo, brightfield image \citep{duval} (left) and sketches of the posterior embryo (right). ]{\includegraphics[scale=0.35]{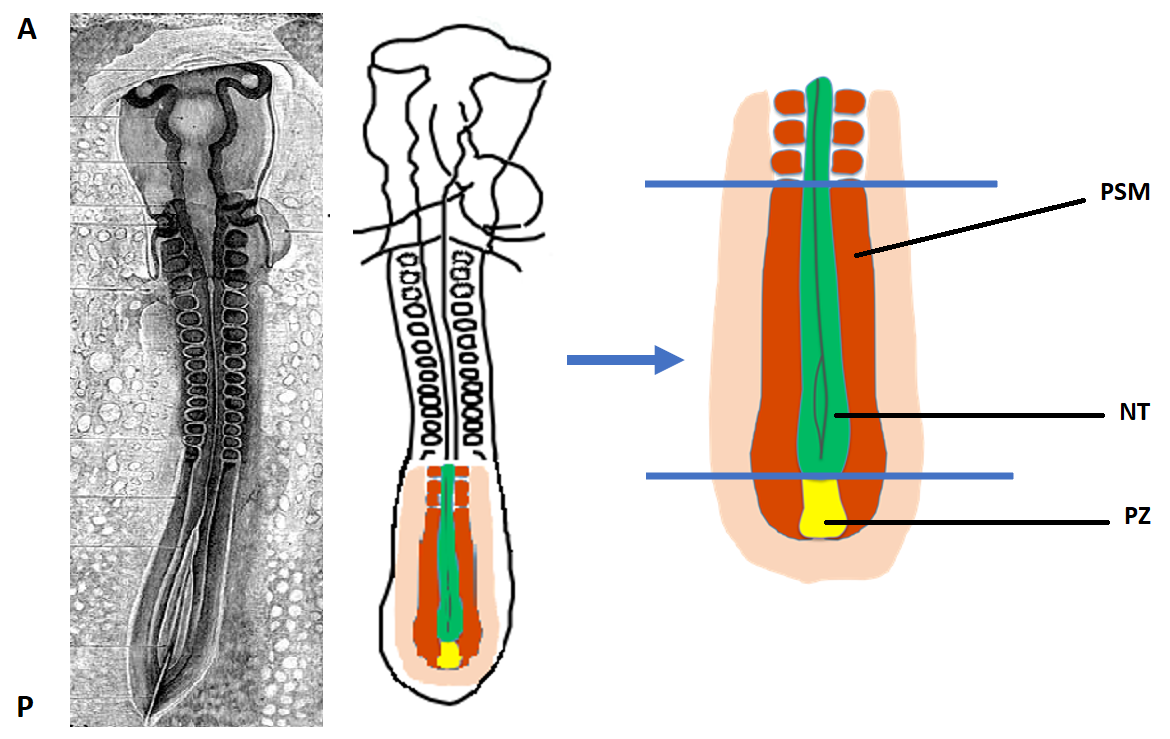}\label{emb}} 
   & \subfloat[Sketch of the cell rotations inside the PSM. As anterior cell movements are less obvious compared to the posterior ones, the arrows representing the global vortices are dashed. Small vortices of opposite directions are also observed in the anterior region.]{\includegraphics[scale=0.4]{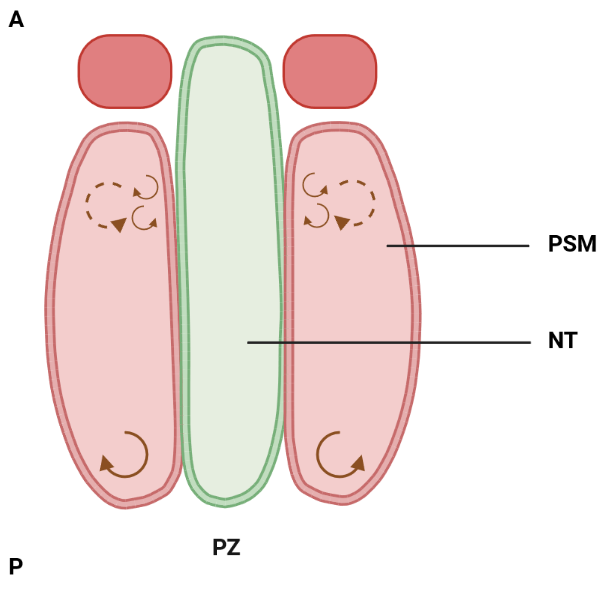}\label{sketchrot}}
   \end{tabular}
\caption{Schematics of the tissues \protect \subref{emb} and the cell movements \protect \subref{sketchrot}. The notations A and P respectively denote the anterior and the posterior parts of the embryo.}\label{sketch}
\end{figure}

In mathematical modelling, when the scale of interest is of the same order as that of a cell (or an individual), which occurs when we are interested in cell trajectories and cell-cell interactions, we turn to microscopic or agent-based models.  When the behavior of interest occurs on a tissue or collective scale, macroscopic models based on PDEs (partial differential equations) are used to describe the dynamics. They describe the evolution of some quantities related to the tissue such as density, velocity, pressure, etc.  Macroscopic models are greatly useful in the context of tissue growth \citep{tis1,ppa,tis2}. They provide a better understanding of biological phenomena  such as cell diffusion and proliferation \citep{kpp}, local and nonlocal cell interactions \citep{lnl}, chemotaxis \citep{ks}, and tumor dynamics \citep{canc7, canc2, canc3, canc4, canc5, canc1}.
In our framework, as we are interested in tissue interplay, we turn to macroscopic models to describe the embryonic growth.

To take into account the observations in the growing embryo, two modelling questions arise. Firstly, how can we model the swirling motions and their origin? And secondly, what kind of mathematical constraints should the tissues's densities obey to segregate and remain segregated? To answer the first question, we consider the tissues as viscous fluids where swirling motions originate from a nontrivial curl of the tissue velocity. Then, our approach consists of introducing tissue friction through viscosity by imposing the Brinkman law for the tissue's velocities. The Brinkman law can be derived from the Navier Stokes equations, which consists of a second order elliptic equation for the tissue velocity $v$. It takes the following form,
\begin{equation}
-\beta \Delta v +v = - \nabla p, \label{inibrink}
\end{equation}
where $p$ is the pressure inside the tissue and $\beta >0$ is the viscosity coefficient of the tissue.
When considered in a 2D (or 3D) bounded domain with Dirichlet boundary conditions, this equation produces a non-zero curl in general.

To tackle the segregation problem, it is first interesting to note that several models in literature exhibit a propagated or passive segregation \citep{B2, seg1, DHV, B1}, that is, if initially the tissues are segregated, they remain segregated for all times. On the other hand, active segregation has also been considered, for example with a chemical interplay leading to tissular separation \cite{chem}, or with a mechanical force promoting segregation \citep{PDS}.
To test the differences between a passive and an active form of segregation and their effects on tissue dynamics, we build two multi-species macroscopic PDE models describing the evolution of the tissue density, as well as its pressure and its velocity. The first model is endowed with passive segregation (VM) and the second with active segregation (ESVM) where we introduce a mechanical pressure denoted by $q$ that will enforce tissue segregation. As we would like to see swirling motions emerge, we consider the Brinkman law for the velocity in the two models.

In literature from Biology and Population dynamics, multi-species models were developed to account for interactions between different populations which affect the dynamics of the system. Early models such as the Lotka-Volterra system \citep{LV1, LV2} were studied and applied to competing/cooperating species. Other models were developed to account for attraction/repulsion between the species and volume-filling constraint, based on non-local \citep{burg1, burg2} or local effects \citep{seg1,seg2}. Emerging segregation has been studied for such models, see \citep{B2,seg1,DHV, CH, B1}.

Segregation between tissues suggests the use of a geometric description of each tissue, where we no longer look at the evolution of the density but rather at that of the tissue shape. Based on the ESVM and the VM, can we derive such a geometric description ? Indeed, geometric models are often obtained as the outcome of fluid-like evolution models by computing an asymptotic limit known as the incompressible limit.

In this paper, we compute the incompressible limit of the ESVM and of the VM and obtain two incompressible models, respectively L-ESVM and L-VM. The aim of this paper is to study these four models and investigate the links between them. 
Namely, we investigate the effects of segregation on tissue dynamics before the incompressible limit and at the incompressible limit. The two ESVM and VM models coincide when the densities are taken initially segregated (that is, active and passive segregations produce the same effect), under some additional but natural assumptions on the parameters. This equivalence between passive and active segregation for initially segregated densities holds at the incompressible limit. We also study comparatively the qualitative behavior of the two limit models.
In fact, we show that the L-ESVM and the L-VM exhibit fully segregated solutions at the limit, and yet, in the L-ESVM the active segregation force still produces a finite effect on the tissue dynamics, which we call a \emph{ghost effect}. These results are summarized in Figure \ref{fig:link}.

\begin{figure}[h!]
\centering
\includegraphics[scale=0.5]{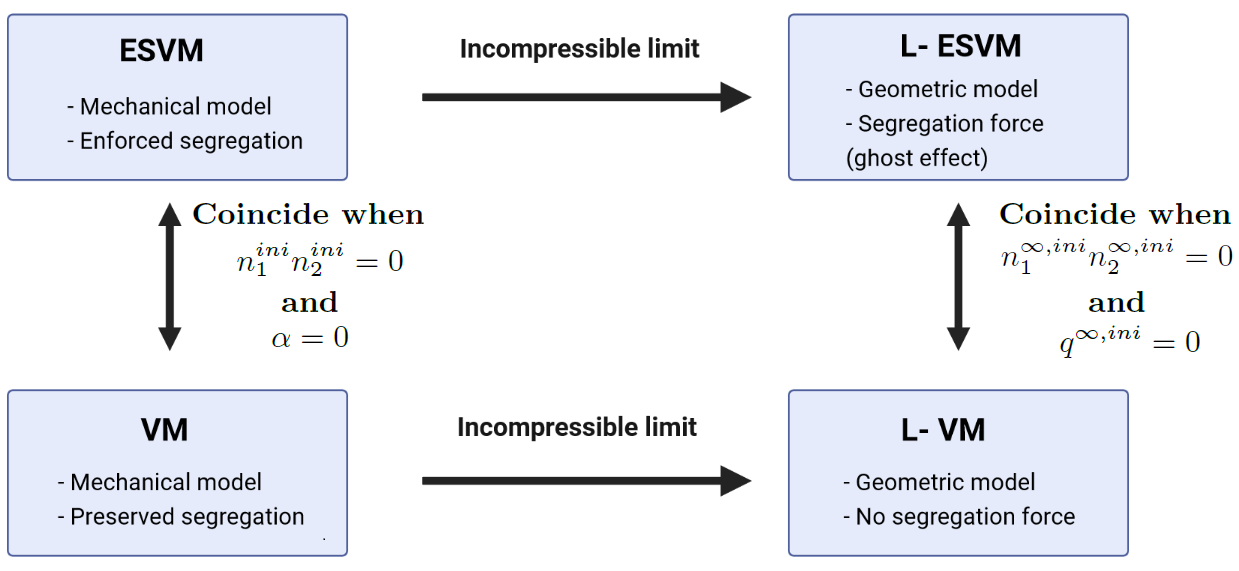}
\caption{Links between the models: ESVM, VM, L-ESVM, and L-VM.}
\label{fig:link}
\end{figure}

More precisely, the incompressible limit consists of considering the evolution model in an asymptotic regime where the pressure becomes (asymptotically) stiff. In the limit, one obtains a moving domain, which corresponds to the domain occupied by the tissue. This description of the tissue amounts to a free boundary problem. It was previously used and proved very useful in the context of tumor growth \citep{fbcan, fb2, PQF}. A free boundary approach was also used to model the elongation of the PSM \citep{bio}. It was shown that free boundary problems can be derived from evolution problems via the incompressible limit in many instances such as the porous media equation in the presence of a reaction term \citep{fb, CF, GQ, pme, fb2,PQF}, with active motion, or nutrients in the case of tumor growth \citep{NP, NS,fb2, PQF, TW}, and for the Navier-Stokes equations with growth terms \citep{EV}.

On multi-species models, the incompressible limit was done in cases which included the Darcy law for the velocity $v$, that is,
\begin{equation}
v=-\nabla p, \label{darcy}
\end{equation}
where $p$ is the pressure of the tissue, and the Brinkman law with the assumption that the velocity is the gradient of some potential $K$, that is,
\begin{equation}\label{inibrinkgrad}
v=-\nabla K \quad \text{ and } \quad -\beta \Delta K + K = p,
\end{equation}
with $\beta$ the viscosity of the tissue \citep{PDS,anyd,Debiec:2020aa,DHV, ol}. Note that when the viscosity $\beta$ is null in \eqref{inibrink}, we recover Darcy’s law \eqref{darcy}. The novelty of our work is computing the incompressible limit with a velocity following the Brinkman law with Dirichlet boundary conditions, so that in general it is not a gradient unlike in~\eqref{inibrinkgrad}.
In fact, in the Darcy case \eqref{darcy} and in the Brinkman case with gradient assumption as in \eqref{inibrinkgrad}, since the velocity takes the form of a gradient, the curl is null, which is not consistent with our observations on the vertebrate embryo.

The modelling setting of the problem is presented in the next section, together with some numerical illustrations. Section \ref{sec:mainres} is dedicated to the presentation of our main results. In Section \ref{sec:proofincomp} we compute the incompressible limit of the ESVM and derive a free boundary problem for the two tissues. The computation of the limit is followed by a discussion on the persistence of the repulsion pressure at the limit (\emph{ghost effect}). In Section \ref{sec:tran_mod1} we formulate the stationary free boundary problem as a transmission problem and prove well-posedness and regularity results on the velocities. In Section \ref{sec:tran_pres} we show the segregation property of the VM. We then prove the existence of a pressure jump across the interfaces. Finally, the results are discussed in Section~\ref{sec:discussion} and the \hyperref[sec:appendix]{Appendix} is devoted to the derivation of the transmission problem.

%%%%%%%%%%%%%%%%%%%%%%%%%%%%
\section{The setting of the problem}
%%%%%%%%%%%%%%%%%%%%%%%%%%%%

%%%%%%%%%%%%%%%%%%%%%%%%%%%%
\subsection{Biophysical properties of the tissues} \label{biop}
%%%%%%%%%%%%%%%%%%%%%%%%%%%%

The quantification of multi-tissue kinetics in \citep{B2017} demonstrates clockwise and counter clockwise vortices within each tissue along the antero-posterior axis. In our model, both tissues (PSM and NT) are endowed with a proliferation rate which we model with a pressure-dependent growth function. This choice of growth function is common in literature, as cells tend to decrease their division rate whenever they are in a high pressured environment. Furthermore,
we use a density-dependent pressure law which accounts for increased pressure in highly dense environments. We use a singular pressure law,
\begin{equation} \label{singpress} p(n)=\epsilon \,\frac{n}{1-n},
\end{equation}
with $n$ the total density for the cell populations, and $\epsilon$ a parameter. Commonly in literature, a power pressure law is used of the form: $ p(n)=n^\gamma$. Here we choose the singular pressure law \eqref{singpress} which prevents cells from overlapping. A pressure law of the form \eqref{singpress} was already used in \cite{VH} in the case of a single species, then in \citep{DHV, PDS} for two species.
The PSM cells are mesenchymal, whereas the NT is an epithelial-like tissue where cells are densely packed. These discrepancies in bio-physical properties induce a difference in the viscosity of these two tissues. The Brinkman law \eqref{inibrink} allows to consider these biophysical differences (by considering two different viscosities inside each tissue), as well as to observe rotational movements within the tissues.

\subsection{The mechanical macroscopic models}
\paragraph{Enforced Segregation Viscous Model (ESVM).} We consider two population densities denoted by $n_{1}$ and $n_{2}$ representing respectively the cell density of the NT and the PSM. We endow each tissue with a viscosity parameter, here denoted by $\beta_{1}>0$ and $\beta_{2}>0$, and use the Brinkman law to govern the velocities $v_{1}$ and $v_{2}$ of each tissue. This law takes into account the effect of the viscosity on the pressure within the tissue and the pressure between the tissues (repulsion), thus linking these variables with an elliptic equation. We introduce the following viscous two-species viscous model, for all $(t,x) \in [0; +\infty) \times \mathbb{R}^{d} :$

\begin{eqnarray}
    && \partial_{t}n_{1}+ \nabla \cdot (n_{1}v_{1}) +\alpha \nabla\cdot (n_{1} \nabla  (\Delta n_{1}))=n_{1}G_{1}(p_{1}),
    \label{n1new} \\
   && \partial_{t}n_{2}+ \nabla \cdot(n_{2}v_{2}) +\alpha \nabla \cdot (n_{2} \nabla (\Delta n_{2}))=n_{2}G_{2}(p_{2}),
\label{n2new}\\
    && -\beta_{1}\Delta v_{1}+v_{1}=-\nabla p_{1}, \label{v1new}\\
      && -\beta_{2}\Delta v_{2}+v_{2}=-\nabla p_{2}, \label{v2new}\\
      && p_{1}=p_{\epsilon}(n_{1}+n_{2})+n_{2}q_m(n_{1}n_{2}), \label{p1new}\\
      && p_{2}=p_{\epsilon}(n_{1}+n_{2})+n_{1}q_m(n_{1}n_{2}), \label{p2new}\\
      && q_m(r)=\frac{m}{m-1}((1+r)^{m-1}-1),\; r=n_{1}n_{2},\label{qmnew}\\
    && p_{\epsilon}(n)=\epsilon \frac{n}{1-n},\; n=n_{1}+n_{2}, \label{pnew}
\end{eqnarray}
with $\alpha>0$ a diffusion parameter linked to the width of the interface between the tissues, $m>0$ the parameter controlling the repulsion pressure $q_m$, and $\epsilon>0$ a parameter controlling the congestion pressure $p_\epsilon$. The total pressures $p_1$ and $p_2$ of each tissue (in (\ref{p1new}), (\ref{p2new})) are the sum of the congestion pressure $p_\epsilon$ and the repulsion pressure $q_m$,  where $p_\epsilon$ is a function of the total density $n=n_1+n_2$, and $q_m$ a function of the product $r=n_1 n_2$ which is null whenever $n_1$ and $n_2$ are not in contact. The functions $G_i$ are the growth functions of tissue $i$. They are typically taken decreasing with a zero-value in some pressure value $p_i^\ast$ to take into account homeostasis.\\
%\begin{eqnarray}
%&& G_{1},G_{2} \in C^{1}(\mathbb{R}), \text{ s.t } G'_{1}(\cdot), \; G'_{2}(\cdot) < 0, \\
%&& G_{1}(p_{1}^{\ast})=0, \quad G_{2}(p_{2}^{\ast})=0, \;\; \text{with} \;\; p_{1}^{\ast}, p_{2}^{\ast} >0.
%\end{eqnarray}
%The homeostatic pressures $p_{1}^{\ast}$ and $p_{2}^{\ast}$ correspond to the maximum pressure reached within each tissue before triggering apoptosis.
We complement the system with initial data,
\begin{eqnarray}
    && \forall x \in \mathbb{R}^{d},\; \;  n_{1}(t=0,x)=n_{1}^{\scriptsize{ini}}(x), \quad  n_{2}(t=0,x)=n_{2}^{\scriptsize{ini}}(x),\label{initdata}
\end{eqnarray}
on which we assume the following conditions,
\begin{eqnarray}
     \forall x \in \mathbb{R}^{d},\; \;  n_{1}^{\scriptsize{ini}}(x) \geq 0, \quad  n_{2}^{\scriptsize{ini}}(x) \geq 0, \label{cond1} \quad \text{and} \quad
     n^{\scriptsize{ini}}(x)\coloneqq   n_{1}^{\scriptsize{ini}}(x)+  n_{2}^{\scriptsize{ini}}(x) < 1,%\label{cond2}
\end{eqnarray}
where 1 stands for the maximum total density allowed by our singular pressure law \eqref{pnew}. Moreover, given initially positive densities, we can check with a standard Stampacchia method that the model preserves  the positivity of the densities for all times.

The model (\ref{n1new})-(\ref{pnew}) was first introduced in \cite{PDS} in the case where $\beta_1=\beta_2=0$, that is, with Darcy's law for the velocity instead of the Brinkman law. The choice of Brinkman law first introduces boundary conditions on the velocity when we consider the model on a bounded domain in $\mathbb{R}^d,$ giving rise to non trivial curl as a result of the boundary conditions. Second, it introduces a new parameter $\beta$, the viscosity coefficient, which in the case of the NT and the PSM has not been measured yet in literature. The model equations are derived from a gradient flow structure associated to a mechanical energy according to the Wasserstein metric. This energy incorporates terms such as the congestion pressure $p_\epsilon$, the repulsion pressure $q_m$, and a term penalizing high gradients of the densities with a coefficient $\alpha$. The fourth order term compensates for the instabilities caused by the repulsion pressure.

Authors in \citep{PDS} show that this model segregates initially mixed densities in finite time, except for a small interface where mixing is allowed. The width of the mixing region depends on the parameters $\alpha$ and $m$, the repulsion parameter. By introducing the viscosities through the Brinkman law as in the equations~(\ref{v1new}) and (\ref{v2new}), we account for the swirling motions and non trivial curl within the tissues. Interestingly, it was shown in \cite{PV, anyd, Debiec:2020aa, ol} that introducing viscosity when the velocity is in a gradient form induces pressure discontinuities on the boundaries. We prove that this discontinuity happens also when taking the velocity law \eqref{v1new}, \eqref{v2new} in a bounded domain with prescribed boundary conditions. One can also notice similarities between our model and the Cahn-Hilliard equation which is also of fourth order and which promotes the segregation of two phases. On the other hand, we use the singular pressure law~(\ref{pnew}) described to prevent cell overlap, which in the two-species case is a function of the total densities.

In Section \ref{limitres} we will compute the incompressible limit of the ESVM and recover a free boundary problem describing the geometric evolution of the tissue domains. We will look at the quantitative behavior of the ESVM at the incompressible limit in Section \ref{sec:tran_mod1}. Furthermore, we will compare the ESVM to a model where there is no pressure enforcing segregation (that is, $q_m=0$, and accordingly we also take $\alpha=0$ in the ESVM), and with initially segregated densities. This model, referred to as the VM model, is endowed with passive segregation. We describe now the VM model.

\paragraph{Viscous model (VM).}
We consider two population densities $n_{1}$ and $n_{2}$ with their respective velocities $v_{1}$ and $v_{2}$. We use again the Brinkman law for the velocities, with viscosity parameters $\beta_{1}>0$ and $\beta_{2}>0$. Here we consider only the congestion pressure $p_\epsilon$ (and no repulsion pressure). Then, the two-species viscous model (VM) is as follows, for all $(t,x) \in [0; +\infty) \times \mathbb{R}^{d}$,

\begin{eqnarray}
    && \partial_{t}n_{1}+ \nabla \cdot (n_{1}v_{1}) =n_{1}G_{1}(p_\epsilon),
    \label{n1v} \\
   && \partial_{t}n_{2}+ \nabla \cdot(n_{2}v_{2}) =n_{2}G_{2}(p_\epsilon),
\label{n2v}\\
    && -\beta_{1}\Delta v_{1}+v_{1}=-\nabla p_{\epsilon}, \label{v1v}\\
      && -\beta_{2}\Delta v_{2}+v_{2}=-\nabla p_{\epsilon}, \label{v2v}\\
    && p_{\epsilon}=\epsilon \frac{n}{1-n},\; \qquad n=n_{1}+n_{2}, \label{pv}
\end{eqnarray}
where the model parameters satisfy (\ref{cond1}).

This model preserves the segregation property, i.e, initially segregated densities remain segregated at all times: see Proposition \ref{segprop}. For this model, we also compute the incompressible limit and show some quantitative features at the limit.

\paragraph{Boundary conditions.}
For the numerical simulations in the next section, we set our problem in the square $[-1,1]^2$ with boundary conditions that best describe the biological setting. In the embryo, the PSM is surrounded by a solid-like structure or wall (called the lateral plate) which plays the role of the lateral boundaries of our bounded domain. The fluid-like PSM ends at its upper boundary with small solid structures called somites (future vertebrae, see the red rounded squares in Figure \ref{sketch}). The cells adhere at the lateral walls and at the somites. Then, homogeneous Dirichlet boundary conditions on the velocity are a natural choice. Note that they imply zero flux boundary conditions on the density.

For our theoretical analysis in Sections \ref{sec:tran_mod1} and \ref{sec:tran_pres} we consider a more general 2D (smooth) domain with homogeneous Dirichlet boundary conditions.

Note that in Section \ref{sec:proofincomp}, the computations of the incompressible limit of the ESVM are formal and hold in any dimension $d$. For simplicity, we consider the case of the whole space, that is $\mathbb{R}^d$, though the results could be adapted easily to the case of a bounded domain.

%%%%%%%%%%%%%%%%%%%%%%%%%%%%%%%%%%%%%%%%%%%%%%%%%%%%%%%%%%
\subsection{Numerical illustrations}
%%%%%%%%%%%%%%%%%%%%%%%%%%%%%%%%%%%%%%%%%%%%%%%%%%%%%%%%%%
In this section we illustrate and compare the models in 2D with numerical simulations in Matlab. The models are considered in a square with zero flux boundary conditions on the densities and homogeneous Dirichlet boundary conditions on the velocities. We use a finite volume semi-implicit scheme on a staggered grid and, for the ESVM, we apply a relaxation method adapted from \cite{PDS} consisting of reducing the order of the equations on the densities from fourth order to second order.

Initially, we consider that part of the NT and of the PSM are already formed. We represent part of the NT (the middle tissue) surrounded by two stripes of PSM (tissues on the left and right of the NT). This initial data is fully segregated with a rather sharp interface between the tissues as it corresponds to the anterior part of the tissues where segregation is the most apparent. More precisely, we take as initial data, ${n_1}^{\scriptsize{ini}}=0.9 \chi_{[-2/3;2/3]\times[-1; 0]}$ and ${n_2}^{\scriptsize{ini}}=0.9\chi_{([-1;-2/3]\cup[[2/3;1])\times [-1;0]} $ with $\chi$ the indicator function (as illustrated in Figure \ref{initialconditionsfigure} \protect \subref{cie}).
 
The PSM being endowed with a higher proliferation rate than the NT \cite{B2017} and with a smaller viscosity coefficient as explained in section \ref{biop}, we choose,
\begin{equation}\label{tissularparam}
G_1(s)=5-s, \quad G_2(s)=10-s, \quad \beta_1=0.5, \quad \beta_2=0.1.
\end{equation}
Finally we pick $\epsilon=0.1$, $m=30$ and $\alpha=0.001$.

We first present a numerical illustration of the ESVM in Figure \ref{sim} Panels \protect \subref{dense}--\protect \subref{curle}.
In Panel \protect \subref{dense} of Figure \ref{sim}, we represent the tissue densities in the ESVM at time $t=0.1$. We see that the NT and the PSM are growing and elongating along the vertical axis which represents the head-to-tail axis of the vertebrate embryo. We clearly see that geometric dynamics emerge, in the sense that the densities remain segregated and each density is either close to zero or to its maximum value (taken equal to 1 as per our pressure law~\eqref{singpress}). This advocates for an incompressible regime.
 As a consequence of its higher proliferation rate and smaller viscosity, the PSM elongates faster than the NT. The tissues co-evolve and remain overall segregated, sharing only a thin interface (where the densities overlay).
\begin{figure}[h]
\centering
\begin{tabular}{cc}
\subfloat[Plot of the densities $n_1(t)$ of the NT (green color bar) and $n_2(t)$ of the PSM (red colorbar) at time $t=0$ in the ESVM.]{\includegraphics[scale=0.35]{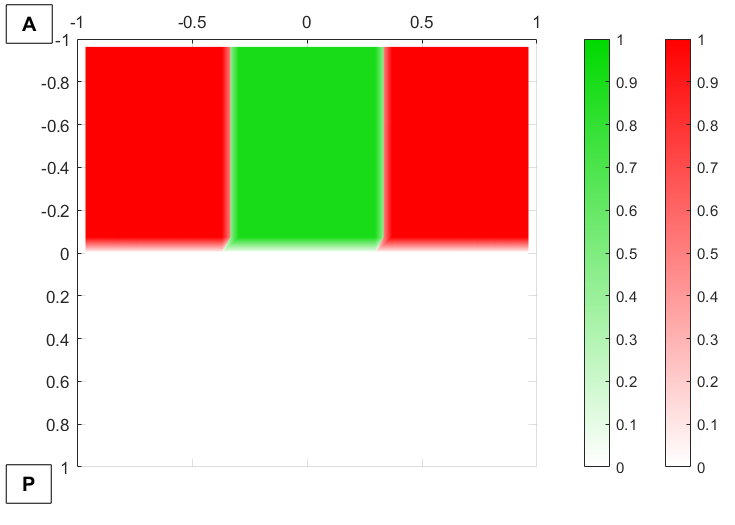} \label{cie} }
   & \subfloat[Plot of the densities $n_1^\infty(t)$ of the NT in green and $n_2^\infty(t)$ of the PSM in red at time $t=0$ in the L-ESVM. The density of each tissue is equal to~1.]{\includegraphics[scale=0.35]{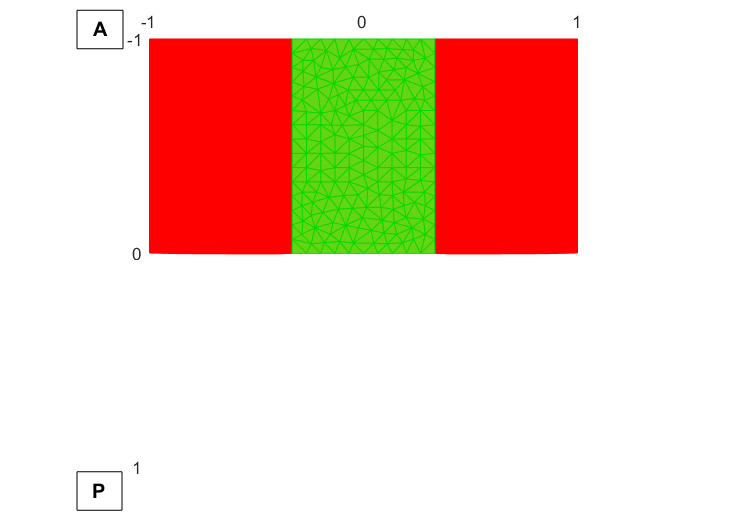}\label{cile}}
   \end{tabular}
      \caption{Initial conditions respectively for the ESVM and the VM (Panel \protect\subref{cie}) and the L-ESVM (Panel \protect \subref{cile}). The notations A and P respectively denote the anterior and the posterior parts of the embryo.}\label{initialconditionsfigure}
   \end{figure}
\begin{figure}[!htbp]
\centering
\begin{tabular}{cc}
\subfloat[PSM and NT densities in the ESVM.
]{\includegraphics[scale=0.35]{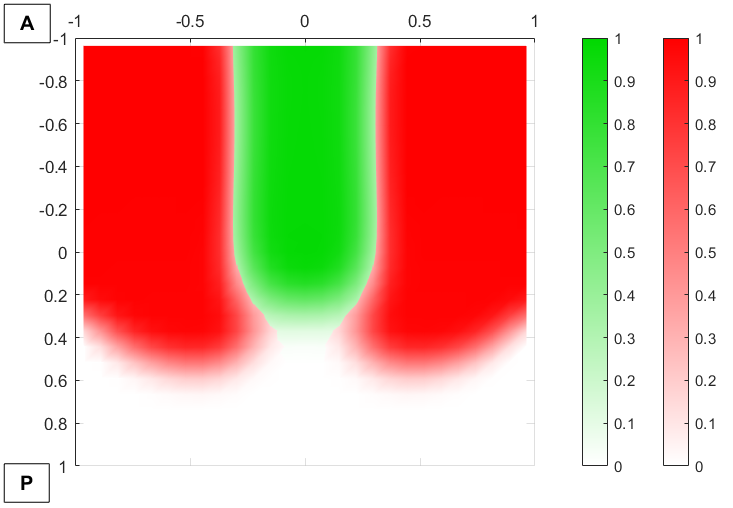}\label{dense}}
   & \subfloat[PSM velocity and curl in the ESVM.
   ]{\includegraphics[scale=0.35]{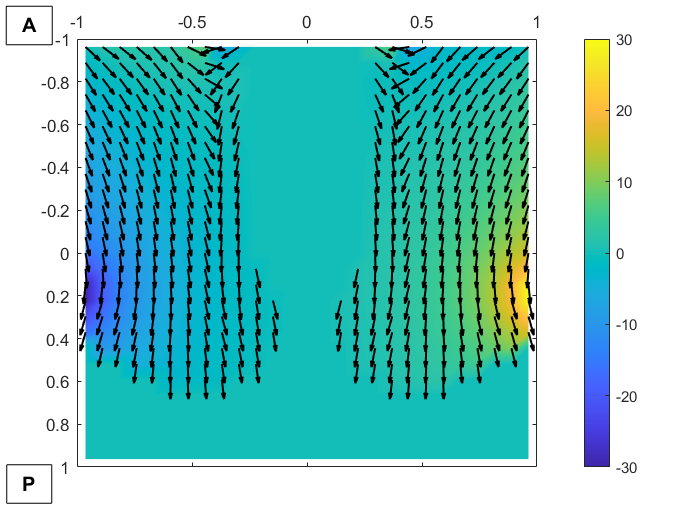}\label{curle}}
   \end{tabular}
    %%brinkgrad
   \begin{tabular}{cc}
   \subfloat[PSM and NT densities in the ESVM using \eqref{bg1}-\eqref{bg2} for the velocities.
   ]{\includegraphics[scale=0.35
   ]{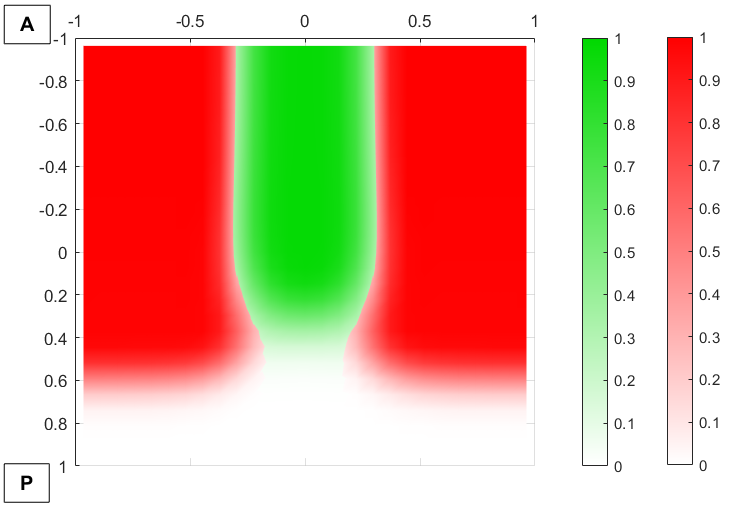}\label{densbg}}
   & \subfloat[PSM velocity and curl in the ESVM using \eqref{bg1}-\eqref{bg2} for the velocities.
   ]{\includegraphics[scale=0.35]{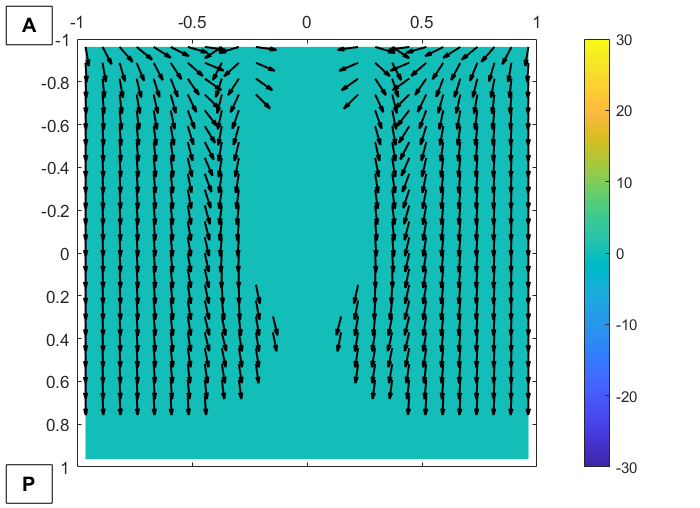}\label{curlbg}} 
   \end{tabular}
 %%VM
   \begin{tabular}{cc}
   \subfloat[PSM and NT densities in the VM.
   %Plot of the densities $n_1(t)$ (green color bar) and $n_2(t)$ (red colorbar) at time $t=~0.1$ in the VM.
   ]{\includegraphics[scale=0.35]{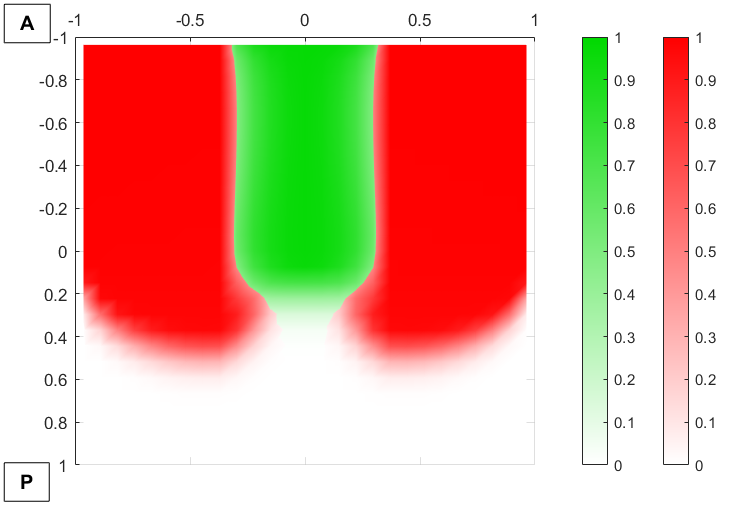}\label{densvm}}
   & \subfloat[PSM velocity and curl in the VM.
   %Plot of curl $v_2$ at time $t=~0.1$ in the VM. The heat map represents the value of the curl and is overlayed by the velocity vector $v_2$ in black.
   ]{\includegraphics[scale=0.35]{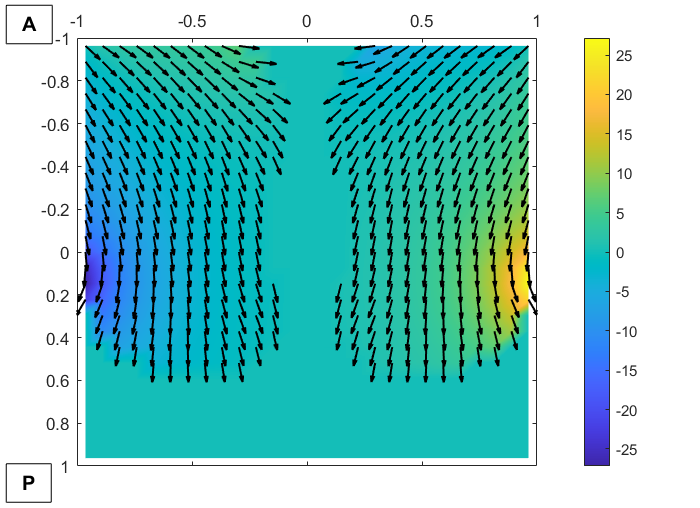}\label{curlvm}} %curlv1ff.png
   \end{tabular}
   %%L-ESVM/L-VM
   \begin{tabular}{cc}
   \subfloat[PSM and NT densities in the L-ESVM.
   %Plot of the densities $n_1^\infty(t)$ in green and $n_2^\infty(t)$ in red at time $t=~0.1$ in the L-ESVM. The density of each tissue is equal to~1.
   ]{\includegraphics[scale=0.35]{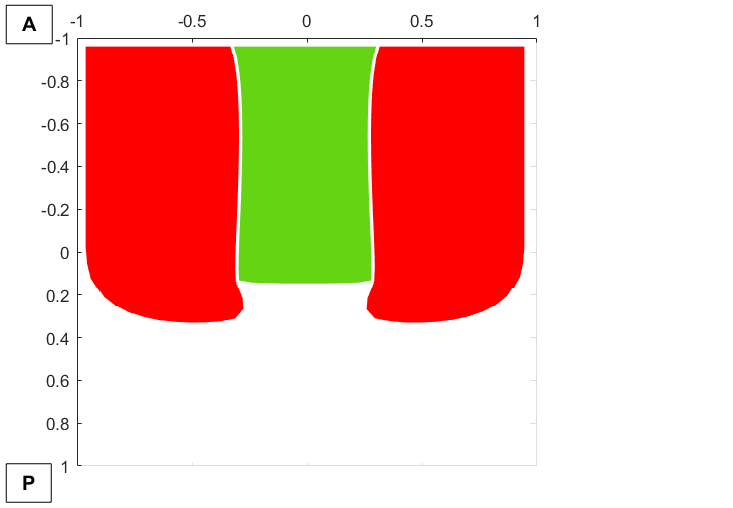}\label{densle}}
   & \subfloat[PSM velocity and curl in the L-ESVM.
   %Plot of curl $v_2^\infty$ at time $t=~0.1$ in the L-ESVM. The heat map represents the value of the curl and is overlayed by the velocity vector $v_2^\infty$ in black.
   ]{\includegraphics[scale=0.35]{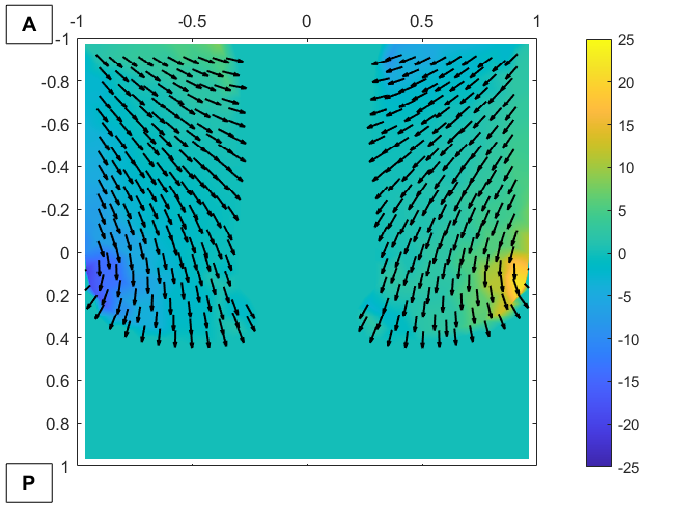}\label{curlle}} %curlv1ff.png
   \end{tabular}
   \caption{Numerical simulations. %The notations A and P respectively denote the anterior and the posterior parts of the embryo. 
   The left panels illustrate the NT density $n_1$ (in green) and the PSM density $n_2$ (in red) and the right panels illustrate curl $v_2$ (heat map) overlayed by the vector $\frac{v_2}{\|v_2\|}$ (black arrows) at time $t=~0.1$ in respectively the ESVM (Panel \protect \subref{dense}--\protect \subref{curle}), the ESVM using \eqref{bg1}-\eqref{bg2} for the velocities (Panel \protect \subref{densbg}--\protect \subref{curlbg}), the VM (Panel \protect \subref{densvm}--\protect \subref{curlvm}) and the L-ESVM (Panel \protect \subref{densle}--\protect \subref{curlle}). The velocity vector $\frac{v_2}{\|v_2\|}$ is represented only in the regions where the density of the PSM is above a threshold equal to 0.1.}\label{sim}
   \end{figure}

In Figure \ref{sim} Panel \protect \subref{curle}, we illustrate that the ESVM produces a non-zero curl of the PSM velocity. Using the solutions $(v_1,v_2)$ of the equations \eqref{v1new}-\eqref{v2new} in the ESVM, we compute the curl inside each tissue, and show the result for the velocity $v_2$ (only in the PSM) in Panel \protect \subref{curle} where the black arrows are the velocity vectors and the heatmap represents the value of the curl. High curls are observed close to the lateral boundaries, especially in the posterior zone. This corresponds to the migration of cells from the PZ into the posterior PSM as described in Section \ref{intro}, with clockwise vortices (negative curl) on the left wall and symmetrically counter-clockwise vortices (positive curl) on the right wall. In the posterior part, close to these lateral walls the velocity vectors clearly display swirling fluid motion. These curls are propagated inside the PSM and along the antero-posterior axis. Anteriorly, we observe some adjacent regions of opposite curls on both sides of the interface. This is in accordance with the observations in the embryo, see Figure \ref{sketch}. Finally, in the anterior region the velocity vectors appear to cross directions, suggesting tissue contractions near the interface. Such contraction is due to the repulsion force $q_m$.
%as some velocity vectors point towards the PSM, representing the repulsion it senses from the NT, while others point towards the NT as the PSM grows and pushes the NT laterally. 

We now present in Figure \ref{sim} Panels \protect \subref{densbg}--\protect \subref{curlbg} a numerical illustration of the ESVM in the case where the velocity is taken of gradient form in the Brinkman law as in \eqref{inibrinkgrad}. That is, we change the velocity laws used in the ESVM \eqref{v1new}-\eqref{v2new} (coupled with Dirichlet boundary conditions) and replace them with the following equations,
\begin{equation}
v_1=-\nabla K_1, \quad \text{ with } -\beta_1\Delta K_1, +K_1=p_1\label{bg1}
\end{equation}
\begin{equation}
v_2=-\nabla K_2, \quad \text{ with } -\beta_2\Delta K_2+K_2=p_2,\label{bg2}
\end{equation}
where $K_1$ and $K_2$ are potentials (coupled with homogeneous Neumann boundary conditions for the potentials $K_i$, $i=1,2$).
%The boundary conditions complementing these equations are the homogeneous Neumann conditions on $K_1$ and $K_2$, as they are compatible with the zero flux conditions on the densities. 
 All the other equations in the ESVM (equations on the densities and the expressions of the pressures) remain unchanged. Our choice of parameters also remains unchanged and is taken as in \eqref{tissularparam}. We solve this new system and compute the curl of the velocities $v_1$ and $v_2$ using the equations \eqref{bg1}-\eqref{bg2}. In Figure \ref{sim}\protect \subref{curlbg} we represent curl $v_2$ by the heatmap overlayed by the velocity vector $v_2$ (the arrows in black).
In contrast with the ESVM, when the velocity is taken as a gradient, we observe a laminar flow: the curl is null everywhere and the vector trajectories are rather straight and pointing towards the posterior. Figures \ref{sim}\protect \subref{dense}-\protect \subref{curle} and \protect \subref{densbg}-\protect \subref{curlbg} show two different dynamics emerging from the two laws considered for the velocity: one is laminar (Figure \ref{sim}\protect \subref{curlbg}) and the other displays swirling motion (Figure \ref{sim}\protect \subref{curle}). Overall, the results of the ESVM describe more accurately the tissue evolution and cell movements observed in the vertebrate embryo, schematically represented in Figure \ref{sketch}\protect \subref{sketchrot}.

We finally present a numerical illustration of the VM in Figure \ref{sim} Panels \protect \subref{densvm}--\protect \subref{curlvm} and compare it with the ESVM.
%That is, we take $q_m=0$ and $\alpha=0$ in the ESVM. 
In the VM, as in the ESVM, we observe high curls in the posterior zone, especially close to the lateral boundaries. However, in contrast to the ESVM, we no longer see in the anterior region adjacent zones of opposite curls but instead we observe unidirectional latero-medial curls (counter clockwise on the left of the NT and clockwise on the right). Both models recover well the swirling motion in the posterior zone. Furthermore, each model, the ESVM and the VM, shows a specific feature observed in the anterior zone of the vertebrate embryo. The ESVM highlights the role of the repulsion pressure in creating alternating zones of opposite curls anteriorly, while the VM displays the global latero-medial curls reported in the anterior region of the PSM, as represented in Figure \ref{sketch}\protect \subref{sketchrot}. A sensitivity analysis on the model parameters (especially those affecting the repulsion force) would allow to obtain a representation of the swirling motions as close as possible to the biological data. A future work will be dedicated to this sensitivity analysis.

 The last set of simulations displayed in Figure \ref{sim}
(Panels \protect \subref{densle} and \protect \subref{curlle}) will be commented in Section~\ref{limitres} below. 

%As $q^{\infty,ini}=0$ and the initially the densities are taken segregated, the simulations of the L-ESVM in fact coincide with those of the L-VM.

%%%%%%%%%%%%%%%%%%%%%%%%%%%%%%%%%%%%%%%%%%%%%%%%%%%%%%%%%%%%%%%%%%%%%%%%%%%%%%%%%%%%%%%
\section{Main results}\label{sec:mainres}
%%%%%%%%%%%%%%%%%%%%%%%%%%%%%%%%%%%%%%%%%%%%%%%%%%%%%%%%%%

\subsection{\hyperref[sec:proofincomp]{Incompressible limit of the ESVM}}\label{limitres}
Our first result is the formal incompressible limit of the two-species viscous model, where we take the parameters $\epsilon \rightarrow 0, \alpha \rightarrow 0$ and $m\rightarrow+\infty$. This allows us to derive a free boundary problem, as detailed in the following theorem.

\begin{theorem}(Formal)\label{formalim}[Incompressible limit of the ESVM]
Let $n^{ini}_{1},\, n^{ini}_{2}$ satisfy the conditions \eqref{cond1} and let $n_{1}$, $n_{2}$, $v_{1}$, $v_{2}$, $p_{1}$, $p_{2}$ solve the viscous two-species model \eqref{n1new}-\eqref{p2new} with \eqref{qmnew}--\eqref{initdata}. Assume that at the incompressible limit, that is, when $\epsilon$ and $\alpha$ go to zero and $m$ goes to infinity, the quantities $n_{1}$, $n_{2}$, $v_{1}$, $v_{2}$, $p_{1}$, $p_{2}$, $q_m(n_1 n_2)$ converge (in a sufficiently strong sense) towards, respectively,  $n_{1}^{\infty}$,  $n_{2}^{\infty}$, $v_{1}^{\infty}$,  $v_{2}^{\infty}$, $p_{1}^{\infty}$, $p_{2}^{\infty}$, $q^{\infty}$. Then, these quantities satisfy the following system of equations for all $(t,x) \in [0; +\infty)\times \mathbb{R}^{d}$,
\begin{eqnarray}
    && \partial_{t}n_{1}^{\infty}+ \nabla \cdot (n_{1}^{\infty}v_{1}^{\infty}) =n_{1}^{\infty}G_{1}(p_{1}^{\infty}),\label{nn1}
     \\ 
   && \partial_{t}n_{2}^{\infty}+ \nabla \cdot (n_{2}^{\infty}v_{2}^{\infty}) =n_{2}^{\infty}G_{2}(p_{2}^{\infty}),\label{nn2}
\\
    && -\beta_{1}\Delta v_{1}^{\infty}+v_{1}^{\infty}=-\nabla p_{1}^{\infty}, \label{vv1}\\
      && -\beta_{2}\Delta v_{2}^{\infty}+v_{2}^{\infty}=-\nabla p_{2}^{\infty}, \label{vv2}\\
      && p_{1}^{\infty}=p^{\infty}+n_{2}^{\infty}q^{\infty}, \\
      && p_{2}^{\infty}=p^{\infty}+n_{1}^{\infty}q^{\infty}, \label{pp1}
\end{eqnarray}
and the following relation holds,
\begin{equation}
    p^{\infty}(1-n^{\infty})=0, \; \text{where} \; n^{\infty}=n_{1}^{\infty}+n_{2}^{\infty}. \label{cong}
\end{equation}
Moreover, we obtain full segregation of the two-species at the limit,
\begin{equation}
    n_{1}^{\infty}n_{2}^{\infty}=0. \label{segr}
\end{equation}
The complementary relation prescribing the dynamics of the pressures inside the two tissues at the limit reads,
\begin{eqnarray}
        p^{\infty^{2}}\bigg(\nabla \cdot \left( n_{1}^{\infty} v_{1}^{\infty}\right)+ \nabla \cdot \left( n_{2}^{\infty}v_{2}^{\infty}\right)\bigg)&=&
        p^{\infty^2}\bigg(n_{1}^{\infty}G_{1}(p_{1}^{\infty})+n_{2}^\infty G_{2}(p_{2}^{\infty})\bigg).
        \label{congpress}
\end{eqnarray}
Finally, defining $K_1^{\infty}=n_1^{\infty}q^{\infty}$ and $K_2^{\infty}=n_2^{\infty}q^{\infty}$, the equations prescribing the dynamics of the pressure due to repulsion at the limit read,
\begin{eqnarray}
\partial_t K_1^{\infty}&+&\nonumber (q^{\infty}+1)\log(q^{\infty}+1)\nabla \cdot (n_1^{\infty}v_1^{\infty})+(q^{\infty}+1)\log(q^{\infty}+1)n_1^{\infty}\nabla \cdot v_2^{\infty}
\\
\nonumber
&+&n_1^{\infty} v_2^{\infty} \cdot   \nabla q^{\infty} - (q^{\infty}+1)\log(q^{\infty}+1)\nabla n_1^{\infty}\cdot v_2^{\infty} +q^{\infty}\nabla \cdot (n_1^{\infty}v_1^{\infty})
\nonumber
\\
&=& (q^{\infty}+1)\log(q^{\infty}+1)n_1^{\infty}(G_1(p_1^{\infty})+G_2(p_2^{\infty}))+q^{\infty}n_1^{\infty}G_1(p_1^{\infty}),\label{u1inf}
\end{eqnarray}

\begin{eqnarray}
\partial_t K_2^{\infty}&+&\nonumber (q^{\infty}+1)\log(q^{\infty}+1)\nabla \cdot (n_2^{\infty}v_2^{\infty})+(q^{\infty}+1)\log(q^{\infty}+1)n_2^{\infty}\nabla \cdot v_1^{\infty} \\\nonumber
&+&n_2^{\infty} v_1^{\infty} \cdot   \nabla q^{\infty} - (q^{\infty}+1)\log(q^{\infty}+1)\nabla n_2^{\infty}\cdot v_1^{\infty} +q^{\infty}\nabla \cdot (n_2^{\infty}v_2^{\infty}) \nonumber\\
&=& (q^{\infty}+1)\log(q^{\infty}+1)n_2^{\infty}(G_1(p_1^{\infty})+G_2(p_2^{\infty}))+q^{\infty}n_2^{\infty}G_2(p_2^{\infty}).\label{u2inf}
\end{eqnarray}
We call L-ESVM the limiting system \eqref{nn1}--\eqref{u2inf} thus obtained.
\end{theorem}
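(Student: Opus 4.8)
The plan is to assume the stated (sufficiently strong) convergences and pass to the limit equation by equation, treating the repulsion-pressure dynamics \eqref{u1inf}--\eqref{u2inf} last, since they are the only genuinely delicate point. I would begin with the ``easy'' block: the Brinkman equations \eqref{vv1}--\eqref{vv2} follow by passing directly to the limit in \eqref{v1new}--\eqref{v2new}, as these are linear in $(v_i,p_i)$, and the pressure decompositions leading to \eqref{pp1} pass to the limit once we set $p^{\infty}=\lim p_{\epsilon}(n)$ and use $q_m(n_1n_2)\to q^{\infty}$, $n_i\to n_i^{\infty}$. For the density equations \eqref{nn1}--\eqref{nn2}, I would argue that the fourth-order terms $\alpha\nabla\cdot(n_i\nabla\Delta n_i)$ vanish as $\alpha\to 0$ under the assumed bounds, leaving \eqref{n1new}--\eqref{n2new} in the limit.

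Next the two structural constraints. For the congestion relation \eqref{cong} I would use the algebraic identity $p_{\epsilon}(n)(1-n)=\epsilon n$ read off directly from \eqref{pnew}; letting $\epsilon\to 0$ with $n$ bounded gives $p^{\infty}(1-n^{\infty})=0$. Segregation \eqref{segr} follows from finiteness of $q^{\infty}$: since $q_m(r)=\frac{m}{m-1}((1+r)^{m-1}-1)$ blows up as $m\to\infty$ whenever $r>0$, the only way $q_m(n_1n_2)$ can converge to a finite limit is $n_1^{\infty}n_2^{\infty}=0$. For the complementary relation \eqref{congpress}, the trick is to add the two density equations and multiply by $p_{\epsilon}^2$: inverting \eqref{pnew} as $n=p_{\epsilon}/(\epsilon+p_{\epsilon})$ yields the identity $p_{\epsilon}^2\,\partial_t n=\epsilon\, n^2\,\partial_t p_{\epsilon}$, whose right-hand side vanishes as $\epsilon\to 0$; the $\alpha$-terms vanish as well, and the surviving flux and growth terms pass to the limit to give \eqref{congpress}.

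The heart of the proof is \eqref{u1inf}--\eqref{u2inf}. Here I would first establish two asymptotics for $r=n_1n_2$: from $(1+r)^{m-1}-1=\frac{m-1}{m}q_m(r)$ one gets $(1+r)^{m-1}\to 1+q^{\infty}$, hence $(m-1)\log(1+r)\to\log(1+q^{\infty})$ and, since $r\to 0$, $mr\to\log(1+q^{\infty})$; combined with $q_m'(r)=m(1+r)^{m-2}$ this yields the crucial limit $n_1n_2\,q_m'(r)=r\,q_m'(r)\to(1+q^{\infty})\log(1+q^{\infty})$, which is exactly the coefficient appearing in \eqref{u1inf}--\eqref{u2inf}. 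Then I would differentiate $K_1=n_1q_m(r)$ in time, $\partial_t K_1=q_m(r)\partial_t n_1+n_1q_m'(r)(n_2\partial_t n_1+n_1\partial_t n_2)$, substitute the density equations, and regroup, using the chain-rule identity $\nabla q_m(r)=q_m'(r)(n_2\nabla n_1+n_1\nabla n_2)$ to recast the awkward pieces before letting $\epsilon,\alpha\to 0$ and $m\to\infty$; the symmetric computation gives \eqref{u2inf}.

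The main obstacle is precisely this last step: the factor $q_m'(r)=m(1+r)^{m-2}$ diverges as $m\to\infty$, so terms such as $n_1^2q_m'(r)$ (arising from $n_1^2q_m'(r)\,\partial_t n_2$) have no finite limit on their own. The resolution is careful bookkeeping: each such term must be paired with a factor $n_2$ (respectively $n_1$) so as to reconstruct the convergent product $r\,q_m'(r)$, or absorbed into $n_1v_2\cdot\nabla q_m(r)$ through the identity above so that it converges to $n_1^{\infty}v_2^{\infty}\cdot\nabla q^{\infty}$. Arranging these cancellations so that the divergent pieces recombine exactly into the finite coefficient $(q^{\infty}+1)\log(q^{\infty}+1)$ is where all the care lies; once this is done, the remaining passages to the limit are routine.
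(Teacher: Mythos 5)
Your proposal is correct and follows essentially the same route as the paper's proof: direct passage to the limit in the linear block, the identity $(1-n)p_\epsilon=\epsilon n$ for \eqref{cong}, finiteness of $q^\infty$ forcing \eqref{segr}, multiplication of the summed density equation by (a multiplier asymptotic to) $p_\epsilon^2$ for \eqref{congpress}, and, for \eqref{u1inf}--\eqref{u2inf}, differentiation of $K_i=n_i q_m$ combined with the chain-rule identity $\nabla q_m=q_m'(n_2\nabla n_1+n_1\nabla n_2)$ and the crucial limit $r\,q_m'(r)\to(q^\infty+1)\log(q^\infty+1)$. The bookkeeping you flag as the main obstacle is exactly how the paper proceeds: each divergent $q_m'$ term is either paired into $r\,q_m'$ or absorbed into $n_1 v_2\cdot\nabla q_m$ via that identity.
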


This formal limit being established, we consider the situation where each tissue $i=1,2$ occupies exactly and fully a specific (moving) domain $\Omega_i(t)$, that is, the density $n_i^\infty(t,\cdot)$ is the indicator function of the domain $\Omega_i(t)$. Note that the congestion relation \eqref{cong} imposes that the pressure $p^\infty$ is null outside $\Omega_1\cup\Omega_2$, and that the full segregation \eqref{segr} imposes that $\Omega_1$ and $\Omega_2$ do not intersect.

From System \eqref{nn1}--\eqref{u2inf}, we can then deduce the evolution of the domains $\Omega_1$ and $\Omega_2$, thus giving rise to a geometrical description of the biological system. Indeed, the velocities of the exterior boundary of the first tissue's domain, the exterior boundary of the second tissue's domain and the interface of the two tissues' domains are given by $(v_1^\infty \cdot \vec{\nu}) \vec{\nu} $, by $(v_2^\infty \cdot \vec{\mu}) \vec{\mu}$, and by $(v_1^\infty \cdot \vec{\nu}) \vec{\nu} =(v_2^\infty \cdot \vec{\mu}) \vec{\mu}$, respectively, where $\vec{\nu}$ is the outward normal vector to $\Omega_1$ and $\vec{\mu}$ is the outward normal vector to $\Omega_2$ (on the interface between $\Omega_1$ and $\Omega_2$ we have $\vec{\nu}=-\vec{\mu}$ ).
Such situation is represented in Figure \ref{fig:schema}.

We perform numerical simulations in Freefem++ (Figure \ref{sim}\protect \subref{densle}-\protect \subref{curlle}) to illustrate  the evolution of the free boundary problem in the case of the embryo, that is, a NT surrounded by two stripes of PSM. The initial conditions were taken as two indicator functions sharing a sharp interface, ${n_1}^{\scriptsize{\infty,ini}}=\chi_{[-2/3;2/3]\times[-1; 0]}$ and ${n_2}^{\scriptsize{\infty,ini}}=\chi_{([-1;-2/3]\cup[[2/3;1])\times [-1;0]} $ with $\chi$ the indicator function, and we take $q^{\infty,ini}=0$ (as illustrated in Figure \ref{initialconditionsfigure}\protect \subref{cile}). Note that, as sketched in Figure \ref{fig:link}, taking the initial densities segregated and the repulsion force $q^{\infty,ini}=0$, the L-ESVM in fact coincides with the L-VM. Therefore, our numerical simulations illustrate both the L-ESVM and the L-VM.
The parameters are taken as in the numerical illustrations of the ESVM, see \eqref{tissularparam}. 
In Figure \ref{sim}\protect \subref{densle} we show the evolution of the free domains of the NT and of the PSM at $t=0.1$. The tissues evolve while remaining completely segregated with a sharp interface throughout the simulation. The PSM, which has a higher proliferation rate and a smaller viscosity coefficient, tends to grow faster than the NT (central tissue) and occupies a wider space. Comparing Figure \ref{sim}\protect \subref{densle} with Figures \ref{sim}\protect \subref{dense} and \ref{sim}\protect \subref{densvm} we see that the density profiles are qualitatively similar. This is in accordance with our choice of parameters $\alpha$, $m$ (in the ESVM) and $\epsilon$ (in the ESVM and VM) taken in their asymptotic ranges.

In Figure \ref{sim}\protect \subref{curlle} we illustrate the curl of the PSM velocity computed with the L-ESVM. High curls are observed in the posterior zone close to the lateral walls, more precisely, negative curls on the left and positive curls on the right. Anteriorly, opposite curls are observed at the interface between the tissues. These profiles match very well those observed in the VM (Figure \ref{sim}\protect \subref{curlvm}) where the repulsion force $q_m$ is absent. This is in accordance with our choice $q^\infty=0$. Finally, we exhibit the velocity vector $v_2^\infty$ in the L-ESVM. In the posterior zone, clear rotating vector trajectories are observed on the lateral walls where curls are high. Overall, our illustrations show that the three models VM, ESVM and L-ESVM reproduce the swirling motions observed in the embryo (Figure \ref{sketch}\protect \subref{sketchrot}). However, only the ESVM (in the asymptotic regime) was able to reproduce the adjacent zones with opposite curls observed anteriorly in the embryo. This suggests that the repulsion pressure does not vanish at the incompressible limit and plays an important biophysical role at the limit as discussed in Section \ref{section2.3}. It also suggests that some active segregation may be at play to maintain the tissues segregated.
Finally our simulation of the L-ESVM displays similar dynamics as that of the ESVM (and the VM) in the asymptotic regime which demonstrates the relevance of the incompressible limit.

\begin{figure}[h!]
  \centering
  \includegraphics[scale=0.35]{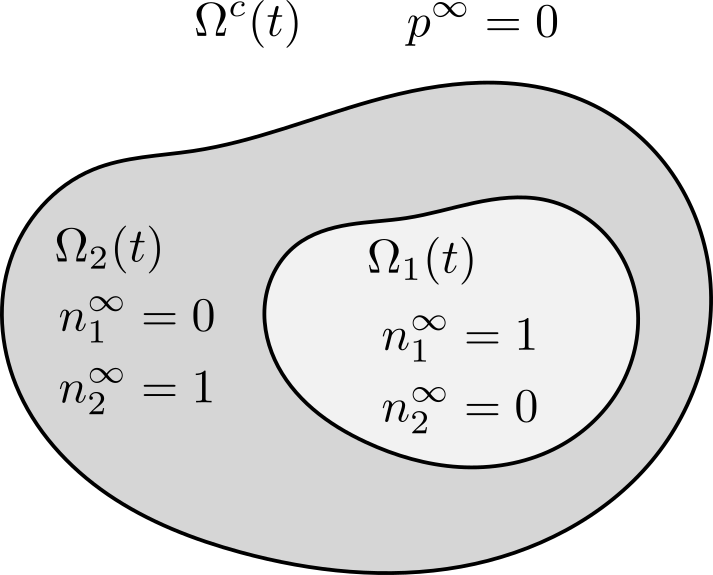}
  \caption{Representation of the subdomains saturated with the two densities in the L-ESVM.}
  \label{fig:schema}
  \end{figure}

%%%%%%%%%%%%%%%%%%%%%%%%%%%%
\subsection{\hyperref[sec:tran_mod1]Study of the stationary L-ESVM ($q$ given)}\label{sec:studystatLESVM}
%%%%%%%%%%%%%%%%%%%%%%%%%%%%

Still in the case when the densities are indicator functions (at the incompressible limit), we are now interested in the study of the stationary velocity-pressure system. For clarity we now remove the superscript "$\infty$" from the variables $n_i,\,v_i,\,q,\,p$ for $i=1\,2$.

We work in a bounded domain $\Theta \subset \mathbb{R}^{2}$, and we consider two stationary subdomains $\Omega_1$ and $\Omega_2$. We suppose that $\Theta$, $\Omega_1$ and $\Omega_2$ are as follows,
\begin{eqnarray}
&&\Theta\text{ a smooth bounded domain in } \mathbb{R}^{2}, \Omega, \, \Omega_1, \, \Omega_2 \text{ subdomains of } \Theta \text{ such that } \label{cc1} \\ 
&& \Omega_1 \cap \Omega_2  \text{ is empty,} \; \Gamma\coloneqq \overline{\Omega_1} \cap \overline{\Omega_2} \text{ is non-empty,} \label{cc1b} \\ 
&& \Omega=\Omega_1\cup \Omega_2 \text{ and } \overline\Omega \subset \Theta, \label{cc2}\\
&& \text{and, defining }\Omega^{c}\coloneqq  \Theta \backslash \overline{\Omega}, \text{ the boundaries } \Gamma,\; \Gamma_1\coloneqq \overline{\Omega_1} \cap \overline{\Omega^{c}}, \; \Gamma_2=\overline{\Omega_2} \cap \overline{\Omega^{c}} \text{ are  } \mathcal{C}^{\infty}.\label{cc3}
\end{eqnarray}

Our analysis is conducted in a simplified framework, where we drop the equations \eqref{u1inf} and \eqref{u2inf} and consider instead that $q$ is a given function on $\Omega$. We assume,
\begin{equation}\label{qL2}
q\in L^2(\Omega).
\end{equation}

Finally, for simplicity, we suppose that the growth function is linear (and decreasing), that is,
\begin{equation}\label{lineargrowth}
G_1(s)=g_1(p_1^{\ast}-s), \quad G_2(s)=g_2(p_2^{\ast}-s),
\end{equation}
for some $g_1, \, g_2, \, p_1^\ast, \, p_2^\ast>0$.

In this framework, the system \eqref{nn1}--\eqref{congpress} at equilibrium and complemented with homogeneous Dirichlet boundary conditions becomes the following elliptic system on $(v_1,v_2)$, for $q$ given as in \eqref{qL2},

\begin{equation}
(S_2) \;\; 
\left\{
\begin{array}{ll}
-\beta_{1}\Delta v_{1}+v_{1}=- \nabla [(p_1^{\ast}-\frac{1}{g_1}\nabla \cdot v_1)\chi_{\Omega_1}+(p_2^{\ast}-\frac{1}{g_2}\nabla \cdot v_2+q)\chi_{\Omega_2}]& \text{\; on \;} \Theta,  \\
-\beta_{2}\Delta v_{2}+v_{2}=- \nabla [(p_2^{\ast}-\frac{1}{g_2}\nabla \cdot v_2)\chi_{\Omega_2}+(p_1^{\ast}-\frac{1}{g_1}\nabla \cdot v_1+q)\chi_{\Omega_1}]& \text{\; on \;} \Theta, \\
    v_1=v_2=0& \text{\; on \;} \partial \Theta, 
    \end{array}
     \right.
\nonumber
\end{equation}
For the existence theory and the elliptic regularity, we assume that the model parameters satisfy the following condition,
\begin{eqnarray}
\beta_1 g_2 >\frac{1}{4}, \qquad \text{and} \qquad \beta_2 g_1 >\frac{1}{4}. \label{cdd1}
\end{eqnarray}
\begin{notation}
Let $U\subset \mathbb{R}^2$ be a bounded domain. For $k\ge 0$, we use the notation $\mathcal{C}^k(\overline{U})$, or equivalently write `` $\mathcal{C}^k$ up to the boundary of $U$", in the classical sense, see \cite{evans}:
\begin{equation}
\mathcal{C}^k(\overline{U})=\{u \in \mathcal{C}^k(U)| D^\alpha u\; \text{ is uniformly continuous on $U$ for all $|\alpha|\leq k$} \}.\nonumber
\end{equation}
Thus if $u \in \mathcal{C}^k(\overline{U})$, then $D^\alpha u$ can be continuously extended to $\overline{U}$ for each multi-index $\alpha$, with $|\alpha|\leq k$.

For $\mu\in(0,1)$, we use the notation $\mathcal{C}^{0,\mu}(\overline{U})$ for (uniformly) H\"older continuous functions on $U$ with exponent $\mu$. Thus, such functions can be extended into a (uniformly) H\"older continuous functions on $\overline{U}$ with the same exponent $\mu$.

Finally, for $k\ge 0$ and $\mu\in(0,1)$, we write,
\begin{equation}
\mathcal{C}^{k,\mu}(\overline{U})=\{u \in \mathcal{C}^k(U)| D^\alpha u\;\in \mathcal{C}^{0,\mu}(\overline{U})  \text{ for all $|\alpha|\leq k$} \}.\nonumber
\end{equation}
\end{notation}
%Note that with these notations, for $\Omega_1$ and $\Omega_2$ disjoints as in \eqref{cc1b}, being in $\mathcal{C}^k(\overline{\Omega_1})\cap \mathcal{C}^k(\overline{\Omega_2})$ does not imply continuity on the frontier $\Gamma$.

\begin{theorem}\label{wellpreg} [Well-posedness and regularity, stationary L-ESVM] Let $\Theta$, $\Omega_1$ and $\Omega_2$ be bounded domains as in (\ref{cc1})--(\ref{cc3}), and let $\beta_1, \beta_2, g_1,g_2>0$ satisfy the condition (\ref{cdd1}). Let $q$ as in \eqref{qL2}. Then, there exists a unique solution $(v_1,v_2) \in H^{1}_0(\Theta)^{2}\times H^{1}_0(\Theta)^2$ to System $(S_2)$. Note in particular that $(v_1,v_2)$ is continuous across the interfaces $\Gamma_1,\Gamma_2,\Gamma$.

Furthermore, if $q\in \mathcal{C}^{0,\mu}(\overline{\Omega_1})\cap \mathcal{C}^{0,\mu}(\overline{\Omega_2})$ for some $\mu\in(0,1)$, then the solution $(v_1,v_2)$ to System $(S_2)$ lies in $\mathcal{C}^{1,\alpha'}(\overline{\Omega_1})\cap\mathcal{C}^{1,\alpha'}( \overline{\Omega_2}) \cap \mathcal{C}^{1,\alpha'}( \overline{\Omega^c})$, with $0<\alpha'\leq\min(\mu,\frac{1}{4})$.
\end{theorem}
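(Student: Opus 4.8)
The plan is to prove well-posedness by a variational argument and then to upgrade regularity by elliptic (Schauder) estimates, treating the problem near the interfaces as a transmission problem. For the first part, I would recast $(S_2)$ in weak form. Abbreviating the scalar pressures by $P_1=(p_1^\ast-\frac{1}{g_1}\nabla\cdot v_1)\chi_{\Omega_1}+(p_2^\ast-\frac{1}{g_2}\nabla\cdot v_2+q)\chi_{\Omega_2}$ and symmetrically $P_2$, I test the two equations against $\phi_1,\phi_2\in H^1_0(\Theta)^2$ and integrate by parts so that each $-\nabla P_i$ becomes $+\int_\Theta P_i\,\nabla\cdot\phi_i$. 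This gives a bilinear form $a$ on $H^1_0(\Theta)^2\times H^1_0(\Theta)^2$ with diagonal part $\sum_{i}\big(\beta_i\int_\Theta\nabla v_i:\nabla\phi_i+\int_\Theta v_i\cdot\phi_i\big)+\frac{1}{g_1}\int_{\Omega_1}(\nabla\cdot v_1)(\nabla\cdot\phi_1)+\frac{1}{g_2}\int_{\Omega_2}(\nabla\cdot v_2)(\nabla\cdot\phi_2)$ and off-diagonal coupling $\frac{1}{g_2}\int_{\Omega_2}(\nabla\cdot v_2)(\nabla\cdot\phi_1)+\frac{1}{g_1}\int_{\Omega_1}(\nabla\cdot v_1)(\nabla\cdot\phi_2)$, together with a load $\ell(\phi_1,\phi_2)=\int_\Theta f_1\,\nabla\cdot\phi_1+\int_\Theta f_2\,\nabla\cdot\phi_2$, where $f_i$ collects the $v$-independent terms of $P_i$ and lies in $L^2(\Theta)$ because $q\in L^2(\Omega)$ and $\chi_{\Omega_i}\in L^\infty$. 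Boundedness of $a$ and $\ell$ is immediate; note that $a$ is not symmetric, but Lax--Milgram does not require symmetry.

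The crux is coercivity. Taking $\phi_i=v_i$, the only terms that can be negative are the divergence cross-terms $\frac{1}{g_1}\int_{\Omega_1}(\nabla\cdot v_1)(\nabla\cdot v_2)+\frac{1}{g_2}\int_{\Omega_2}(\nabla\cdot v_1)(\nabla\cdot v_2)$. I would bound these by Young's inequality, absorbing the $(\nabla\cdot v_1)^2$ part on $\Omega_1$ and the $(\nabla\cdot v_2)^2$ part on $\Omega_2$ into the positive divergence terms already present, and the two remaining pieces into the viscous terms $\beta_1\|\nabla v_1\|_{L^2}^2$ and $\beta_2\|\nabla v_2\|_{L^2}^2$. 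The inequality that makes this close with the sharp constant is $\|\nabla\cdot v\|_{L^2(\Theta)}^2\le\|\nabla v\|_{L^2(\Theta)}^2$ for $v\in H^1_0(\Theta)^2$, obtained by integrating by parts twice and using the Frobenius bound $\operatorname{tr}(A^2)\le|A|^2$ for $A=\nabla v$. Optimising the two Young parameters (both equal to $\tfrac12$), the cross-terms are absorbed exactly under $\beta_1g_2>\tfrac14$ and $\beta_2g_1>\tfrac14$, i.e. hypothesis \eqref{cdd1}, yielding $a((v_1,v_2),(v_1,v_2))\ge c(\|v_1\|_{H^1}^2+\|v_2\|_{H^1}^2)$. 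Lax--Milgram then produces the unique $(v_1,v_2)\in H^1_0(\Theta)^2\times H^1_0(\Theta)^2$, and the continuity across $\Gamma,\Gamma_1,\Gamma_2$ is automatic since an $H^1$ field has matching traces on both sides of a smooth interface.

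For the regularity statement I would first treat the interiors, then bootstrap up to the interfaces. On the interior of $\Omega_1$ one has $\chi_{\Omega_2}=0$, so the first equation decouples into the constant-coefficient, strongly elliptic Lam\'e-type system $-\beta_1\Delta v_1-\frac{1}{g_1}\nabla(\nabla\cdot v_1)+v_1=0$, whence $v_1$ is smooth there; the second equation then reads $-\beta_2\Delta v_2+v_2=\frac{1}{g_1}\nabla(\nabla\cdot v_1)-\nabla q$, a Helmholtz-type equation with a smooth part plus the divergence-form datum $-\nabla q$, so interior Schauder estimates for divergence-form right-hand sides give $v_2\in\mathcal{C}^{1,\mu}$ locally. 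This is already where the cap $\alpha'\le\mu$ enters. The roles of $v_1$ and $v_2$ are reversed on $\Omega_2$, and on $\Omega^c$ both solve $-\beta_i\Delta v_i+v_i=0$ and are smooth.

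The hard part is regularity up to $\Gamma,\Gamma_1,\Gamma_2$, and this is the main obstacle. I would use the transmission formulation (derived in the Appendix): across each interface $v_1$ and $v_2$ are continuous, while the conormal stresses $\beta_i\partial_\nu v_i-P_i\vec{\nu}$ are continuous, so the genuine jump of the pressure $P_i$ forces a jump of $\partial_\nu v_i$ --- hence one can only expect piecewise, not global, $\mathcal{C}^{1,\alpha'}$ regularity. After flattening each smooth interface, I would apply Schauder estimates for elliptic transmission problems to get $\mathcal{C}^{1,\alpha'}$ up to the interface from each side, and bootstrap the coupling (establish $v_1$ up to $\Gamma_1$, feed its divergence into the equation for $v_2$, and symmetrically across $\Gamma_2$ and $\Gamma$). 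The exponent is capped by $\min(\mu,\tfrac14)$: by $\mu$ because of the datum $\nabla q$, and by $\tfrac14$ because the admissible Hölder exponent in the coupled transmission estimate is controlled by the same structural threshold as the coercivity constant in \eqref{cdd1}. The essential difficulty is exactly this interplay: the pressure jump degrades regularity precisely at the interfaces, while the divergence coupling ties the regularity of each velocity on one subdomain to that of the other, so the bootstrap must be arranged to close at $\alpha'\le\min(\mu,\tfrac14)$.
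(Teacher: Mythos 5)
Your well-posedness argument is exactly the paper's: the same weak formulation, the same non-symmetric bilinear form and load functional, continuity by Cauchy--Schwarz, and coercivity obtained by Young's inequality together with $\|\nabla\cdot v\|_{L^2(\Theta)}\le\|\nabla v\|_{L^2(\Theta)}$ for $v\in H^1_0(\Theta)^2$, which produces the constants $\beta_1-\tfrac{1}{4g_2}$ and $\beta_2-\tfrac{1}{4g_1}$ and hence precisely condition \eqref{cdd1}; Lax--Milgram concludes. That part is fine.

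The regularity part is where you diverge from the paper, and it contains a genuine gap. The paper does not do interior Schauder plus a hand-made bootstrap: it applies the composite-material regularity theorem of Li and Nirenberg (Theorem 1.1 and Remark 1.2 of the reference cited as \cite{li}) to the \emph{full coupled system} in the four scalar unknowns $(v_1,v_2)$, verifying its weak ellipticity hypothesis by the very same coercivity computation (with $\lambda=\min(\beta_1-\frac{1}{4g_2},\,\beta_2-\frac{1}{4g_1})>0$), and the exponent $\min(\mu,\frac14)$ is what that theorem delivers --- it is not derived separately. Your sequential bootstrap cannot close at the interface $\Gamma$: there, the jump of $\beta_1\,\partial_{\vec\nu}v_1$ involves the trace $(\nabla\cdot v_2)_{\Omega_2}$, while the jump of $\beta_2\,\partial_{\vec\nu}v_2$ involves $(\nabla\cdot v_1)_{\Omega_1}$. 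So obtaining $v_1\in\mathcal{C}^{1,\alpha'}$ up to $\Gamma$ requires $\mathcal{C}^{0,\alpha'}$ regularity of $(\nabla\cdot v_2)_{\Omega_2}$ on $\Gamma$, which requires $v_2\in\mathcal{C}^{1,\alpha'}$ up to $\Gamma$, which in turn requires the same regularity of $v_1$: the scheme is circular, and a priori these traces are only $H^{1/2}$. (Your order of operations does work at $\Gamma_1$ and $\Gamma_2$, where the jump data of $v_1$, respectively $v_2$, involve only that velocity's own traces; but Theorem \ref{wellpreg} assumes the geometry \eqref{cc1b}, in which $\Gamma$ is nonempty, so this case cannot be avoided.) The repair is exactly to treat $(v_1,v_2)$ simultaneously as a single elliptic system with piecewise-constant coefficients and invoke a systems version of transmission Schauder estimates --- which is the cited Li--Nirenberg theorem, i.e.\ the paper's route. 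Two further points: your justification of the threshold $\frac14$ (``the same structural threshold as the coercivity constant'') is an assertion, not an argument --- in the paper this number comes out of the cited theorem once its ellipticity constant is computed; and you never address regularity up to the outer boundary $\partial\Theta$, which is needed for $\mathcal{C}^{1,\alpha'}(\overline{\Omega^c})$ and which the paper obtains by extending the velocities by zero slightly beyond $\partial\Theta$ using the homogeneous Dirichlet condition.
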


%For the existence theory, we assume that the model parameters satisfy the following condition:
%\begin{eqnarray}
%\left(\frac{\beta_1}{2} +\frac{1}{g_1}\right)\left(\frac{\beta_2}{2}+\frac{1}{g_2}\right)>\left(\frac{1}{4\min(g_1,g_2)}\right)^2. \label{cdd1}
%\end{eqnarray}
%\begin{theorem}\label{wellpreg} [Well-posedness and regularity, two species] Let $\Theta$, $\Omega_1$ and $\Omega_2$ be bounded domains as in (\ref{cc1})--(\ref{cc3}), and let $\beta_1, \beta_2, g_1,g_2>0$ satisfy the condition (\ref{cdd1}). Let $q\in L^2(\Omega)$. Then, there exists a unique solution $(v_1,v_2) \in H^{1}_0(\Theta)^{2}\times H^{1}_0(\Theta)^2$ to problem $(S_2)$. Note in particular that $(v_1,v_2)$ is continuous across the interfaces $\Gamma_1,\Gamma_2,\Gamma$.

%Furthermore, if $q\in C^{k,\mu}(\overline{\Omega_1\cap \Theta})\cap C^{k,\mu}(\overline{\Omega_2\cap \Theta})$ for some $k\geq 0$ and some $0<\mu<1$, and if the parameters satisfy the condition \begin{equation}
%\min(\beta_1,\beta_2)>\frac{1}{\min(g_1,g_2)}, \label{cdt:ellip}
%\end{equation}
 %then the solution $(v_1,v_2)$ to the system $(S_2)$ lies in $C^{k+1,\mu}(\overline{\Omega_1}\cap \Theta)\cap C^{k+1,\mu}( \overline{\Omega_2}\cap \Theta) \cap C^{\infty}( \overline{\Omega^c}\cap \Theta)$.
%\end{theorem}
\begin{remark}\label{rem:q_out_Omega} Note that $q$ is defined only in $\Omega$, so that we do not need any information on $q$ in regions where both densities are simultaneously equal to zero.
\end{remark}
\begin{remark} The result can be extended to the case of (smooth) non-homogeneous Dirichlet boundary conditions.
\end{remark}
\begin{remark} Since $q=0$ is always a particular solution of (the stationary versions of) Eq. \eqref{u1inf} and \eqref{u2inf} independently of $(v_1,v_2)$, System $(S_2)$ with $q=0$ gives a solution to the original system L-ESVM taken at equilibrium. Note that the case where $q=0$ on $\Omega$ is included in the well-posedness and regularity results.\label{qzero}
\end{remark}

We can now rewrite system $(S_2)$ as a transmission problem, giving rise to non-trivial transmission conditions at the interfaces $\Gamma$, $\Gamma_1$ and $\Gamma_2$.
To this end we introduce the following notation: for $D$ a domain of $\mathbb{R}^2$ and $h$ a continuous function on $D$ that can be continously extended on $\overline{D}$, then, for $x\in \partial D$, we note
\begin{equation} (h)_{D}(x)\coloneqq \lim_{\substack{y\in D,\\ y\rightarrow x}} h(y). \label{def_h_D}
\end{equation}

\begin{proposition}[Transmission problem, stationary L-ESVM]\label{prop:TP}
Assume (\ref{cc1})--(\ref{cc3}), (\ref{cdd1}), and $q\in \mathcal{C}^{0,\mu}(\overline{\Omega_1})\cap \mathcal{C}^{0,\mu}(\overline{\Omega_2})$ for some $\mu\in(0,1)$. Then the solution of system $(S_2)$ solves the following transmission problem $(T_2)$ considered on $\Theta$ and coupled with homogeneous Dirichlet boundary conditions on $\partial \Theta$,

\hspace{-0.5cm}
\begin{tabular}{c}
$(T_{2,\Omega_1})
\left\{
  \begin{array}{ll}
  -\beta_1 \Delta v_1+v_1-\frac{1}{g_1}\nabla \nabla \cdot v_1=0 & \text{ in }\Omega_1, \\
  -\beta_2 \Delta v_2+v_2-\frac{1}{g_1}\nabla \nabla \cdot v_1=-\nabla q & \text{ in } \Omega_1.
  \end{array}
\right.$\vspace{0.3cm}\\

$(T_{2,\Omega_2})\left\{
  \begin{array}{ll}
  -\beta_1 \Delta v_1+v_1-\frac{1}{g_2}\nabla \nabla \cdot v_2=-\nabla q & \text{ in } \Omega_2, \\
  -\beta_2 \Delta v_2+v_2-\frac{1}{g_2}\nabla \nabla \cdot v_2=0 & \text{ in } \Omega_2.
  \end{array}
\right.$
\end{tabular}\vspace{0.3cm}

\hspace{-0.5cm}
\begin{tabular}{c}
$(T_{2,\Omega^c}) \left\{
  \begin{array}{ll}
  -\beta_1 \Delta v_1+v_1=0  & \text{ in } \Omega^c,\\
-\beta_2 \Delta v_2+v_2=0  & \text{ in } \Omega^c.\\
  \end{array}
\right.$
\end{tabular}\vspace{0.3cm}

\hspace{-0.5cm}
\begin{tabular}{c}
$(T_{2,\Gamma_1})\left\{
\begin{array}{ll}
\beta_1 [(\nabla v_1)_{\Omega_1}-(\nabla v_1)_{\Omega^c}]\cdot \vec{\nu}=[p_1^{\ast}-\frac{1}{g_1}(\nabla \cdot v_1)_{\Omega_1}] \vec{\nu}  & \text{ on } \Gamma_1,\\
\beta_2 [(\nabla v_2)_{\Omega_1}-(\nabla v_2)_{\Omega^c}]\cdot \vec{\nu}=[p_1^{\ast}+(q)_{\Omega_1}-\frac{1}{g_1}(\nabla \cdot v_1)_{\Omega_1}] \vec{\nu} & \text{ on } \Gamma_1,\\
(v_1)_{\Omega_1}=(v_1)_{\Omega^c}, \quad (v_2)_{\Omega_1}=(v_2)_{\Omega^c}  & \text{ on } \Gamma_1.
\end{array}
\right.$\\
\end{tabular}\vspace{0.3cm}

\hspace{-0.5cm}
\begin{tabular}{c}
$(T_{2,\Gamma_2})\left\{
\begin{array}{ll}
\beta_1 [(\nabla v_1)_{\Omega_2}-(\nabla v_1)_{\Omega^c}]\cdot \vec{\mu}
=[p_2^{\ast}+(q)_{\Omega_2}-\frac{1}{g_2}(\nabla \cdot v_2)_{\Omega_2}]\vec{\mu} & \text{ on } \Gamma_2,\\
\beta_2 [(\nabla v_2)_{\Omega_2}-(\nabla v_2)_{\Omega^c}]\cdot \vec{\mu}
=[p_2^{\ast}-\frac{1}{g_2}(\nabla \cdot v_2)_{\Omega_2}]\vec{\mu} & \text{ on } \Gamma_2,\\
(v_1)_{\Omega_2}=(v_1)_{\Omega^c}, \quad (v_2)_{\Omega_2}=(v_2)_{\Omega^c}  & \text{ on } \Gamma_2.
\end{array}
\right.$\\
 \end{tabular}\vspace{0.3cm}
 
\hspace{-0.5cm}
\begin{tabular}{c}
$(T_{2,\Gamma})\left\{
\begin{array}{ll}
\beta_1 [(\nabla v_1)_{\Omega_1}-(\nabla v_1)_{\Omega_2}]\cdot \vec{\nu}=[(p_1^{\ast}-p_2^{\ast})-(q)_{\Omega_2}+\frac{1}{g_2}(\nabla \cdot v_2)_{\Omega_2}-\frac{1}{g_1}(\nabla \cdot v_1)_{\Omega_1}] \vec{\nu} & \text{ on } \Gamma,\\
\beta_2 [(\nabla v_2)_{\Omega_1}-(\nabla v_2)_{\Omega_2}]\cdot \vec{\nu}=[(p_1^{\ast}-p_2^{\ast})+(q)_{\Omega_1}+\frac{1}{g_2}(\nabla \cdot v_2)_{\Omega_2}-\frac{1}{g_1}(\nabla \cdot v_1)_{\Omega_1}]\vec{\nu} & \text{ on } \Gamma,\\
(v_1)_{\Omega_1}=(v_1)_{\Omega_2}, \quad (v_2)_{\Omega_1}=(v_2)_{\Omega_2},\quad v_1\cdot \vec{\nu}=v_2\cdot \vec{\nu}  & \text{ on } \Gamma.
\end{array}
\right.$
\end{tabular}
\end{proposition}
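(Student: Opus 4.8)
The plan is to recast system $(S_2)$ as an equality of distributions on all of $\Theta$ and, on each side of the two vector equations, to separate the \emph{regular} (piecewise-defined, locally integrable) part from the \emph{singular} part supported on the interfaces $\Gamma,\Gamma_1,\Gamma_2$. The equations hold a priori in $H^{-1}(\Theta)$ since $v_i\in H^1_0$, $\nabla\cdot v_i\in L^2$ and $q\in L^2$; the regularity granted by Theorem~\ref{wellpreg}, namely $v_1,v_2\in\mathcal{C}^{1,\alpha'}(\overline{\Omega_1})\cap\mathcal{C}^{1,\alpha'}(\overline{\Omega_2})\cap\mathcal{C}^{1,\alpha'}(\overline{\Omega^c})$, is what makes the splitting meaningful: it guarantees that the one-sided traces $(\nabla v_i)_{\Omega_1},(\nabla v_i)_{\Omega_2},(\nabla v_i)_{\Omega^c}$ and $(\nabla\cdot v_i)_{\Omega_j},(q)_{\Omega_j}$ (in the sense of \eqref{def_h_D}) exist and are continuous along each interface, so that the singular contributions are well-defined surface measures with continuous densities.

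First I would treat the bulk. In the interior of $\Omega_1$ one has $\chi_{\Omega_1}\equiv 1$, $\chi_{\Omega_2}\equiv 0$, so the right-hand sides of $(S_2)$ reduce (using $\nabla p_1^\ast=0$) to $\frac{1}{g_1}\nabla\nabla\cdot v_1$ and $\frac{1}{g_1}\nabla\nabla\cdot v_1-\nabla q$, which is exactly $(T_{2,\Omega_1})$; the same substitution in $\Omega_2$ and in $\Omega^c$ yields $(T_{2,\Omega_2})$ and $(T_{2,\Omega^c})$. The continuity of $v_1,v_2$ across all three interfaces is simply the statement that functions of $H^1_0(\Theta)$ carry no jump of their traces, giving the lines $(v_i)_{\Omega_1}=(v_i)_{\Omega^c}$, $(v_i)_{\Omega_2}=(v_i)_{\Omega^c}$ and $(v_i)_{\Omega_1}=(v_i)_{\Omega_2}$.

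The core of the argument is the matching of singular parts. I would rely on the two elementary distributional identities, valid across an oriented interface $S$ with unit normal $\vec n$ pointing from side $A$ to side $B$, for a field that is continuous with jumping gradient and for a scalar with jumping trace:
\begin{equation*}
\Delta v=\{\Delta v\}-\big[(\nabla v)_{A}-(\nabla v)_{B}\big]\cdot\vec n\,\delta_S,\qquad \nabla f=\{\nabla f\}-\big[(f)_A-(f)_B\big]\vec n\,\delta_S,
\end{equation*}
where $\{\cdot\}$ denotes the piecewise-classical part. Applying the first identity to $v_1$ and $v_2$ and the second to the bracketed pressure fields of $(S_2)$, then equating the coefficients of $\delta_{\Gamma_1},\delta_{\Gamma_2},\delta_{\Gamma}$ componentwise, reproduces precisely the Neumann-type jump relations of $(T_{2,\Gamma_1}),(T_{2,\Gamma_2}),(T_{2,\Gamma})$. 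The only thing to watch is orientation: on $\Gamma_1,\Gamma_2$ I take $A=\Omega_1,\Omega_2$ with normals $\vec\nu,\vec\mu$ and $B=\Omega^c$, while on $\Gamma$ I take $A=\Omega_1$, $B=\Omega_2$ and use $\vec\mu=-\vec\nu$; carrying the traces $(\nabla\cdot v_i)_{\Omega_j}$ and $(q)_{\Omega_j}$ through the scalar jump then matches the stated right-hand sides term by term (e.g.\ on $\Gamma_1$ one gets $\beta_1[(\nabla v_1)_{\Omega_1}-(\nabla v_1)_{\Omega^c}]\cdot\vec\nu=[p_1^\ast-\tfrac{1}{g_1}(\nabla\cdot v_1)_{\Omega_1}]\vec\nu$).

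It remains to justify the kinematic condition $v_1\cdot\vec\nu=v_2\cdot\vec\nu$ on $\Gamma$, which is the one relation not visible from matching the two elliptic equations and which I expect to be the delicate point. It does not follow from $(S_2)$ alone but from the complementarity relation \eqref{congpress} retained in the derivation of $(S_2)$: in the saturated region $\Omega$ (where $p^\infty>0$) relation \eqref{congpress} forces $\nabla\cdot(v_1\chi_{\Omega_1}+v_2\chi_{\Omega_2})=G_1\chi_{\Omega_1}+G_2\chi_{\Omega_2}$, whose right-hand side is a bounded function. Computing the left-hand divergence distributionally on $\Omega$ produces the singular term $-\big[(v_1)_{\Omega_1}\cdot\vec\nu-(v_2)_{\Omega_2}\cdot\vec\nu\big]\delta_\Gamma$; since no surface measure can appear on the right, its density must vanish, and combined with the already established continuity $(v_1)_{\Omega_1}=(v_1)_{\Omega_2}$ this gives $v_1\cdot\vec\nu=v_2\cdot\vec\nu$ on $\Gamma$. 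This expresses that the shared interface carries no spurious mass source, i.e.\ that $\Omega_1$ and $\Omega_2$ continue to tile $\Omega$ without gap or overlap.
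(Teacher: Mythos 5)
Your treatment of the bulk equations, of the trace continuity coming from $v_i\in H^1_0(\Theta)^2$, and of the Neumann-type jump relations is correct and is, in substance, the paper's own proof: the two distributional identities you invoke (a surface delta carrying the jump of $\nabla v_i\cdot\vec{n}$ on one side, and the jump of the bracketed pressure field times $\vec{n}$ on the other) are exactly what the paper obtains by testing $(S_2)$ against test functions localized around the interfaces, $\tilde{\phi}_n=\phi\,\chi_n$, and passing to the limit (this is carried out in the Appendix for the single-species case and invoked for $(T_2)$). Equating coefficients of $\delta_{\Gamma_1},\delta_{\Gamma_2},\delta_{\Gamma}$ and taking that limit are the same computation in different packaging, and both rely on the same ingredient, namely the piecewise $\mathcal{C}^{1,\alpha'}$ regularity from Theorem \ref{wellpreg} which makes the one-sided traces well defined.

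The genuine difference is your handling of the kinematic condition $v_1\cdot\vec{\nu}=v_2\cdot\vec{\nu}$ on $\Gamma$. The paper's proof does not single this condition out: it asserts that the test-function procedure yields all of $(T_{2,\Gamma})$, whereas you correctly observe that matching singular parts of the two elliptic equations of $(S_2)$ can only relate jumps of $\nabla v_1,\nabla v_2$ to jumps of the pressure fields, never the traces of $v_1$ against those of $v_2$; indeed the condition is trivially true in special cases (for $q=0$, $\beta_1=\beta_2$ the two equations of $(S_2)$ have identical right-hand sides, so $v_1\equiv v_2$ by uniqueness), but in general it is extra information. Your fix, reading the condition off the absence of a surface term in the stationary complementarity relation \eqref{congpress}, is natural, and the surface density you compute, $-\bigl[(v_1)_{\Omega_1}\cdot\vec{\nu}-(v_2)_{\Omega_2}\cdot\vec{\nu}\bigr]\delta_\Gamma$, is the right one. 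Two caveats, though. First, this imports information not contained in $(S_2)$: strictly speaking you prove the statement for stationary solutions of the full L-ESVM rather than for ``the solution of $(S_2)$'' as literally phrased (this is arguably an imprecision of the proposition itself rather than of your argument). Second, to use \eqref{congpress} distributionally across the interface you need the relation, after removing the factor $p^{\infty\,2}$, to hold on an open neighbourhood of $\Gamma$; note that $\Omega=\Omega_1\cup\Omega_2$ as defined in \eqref{cc2} does not contain $\Gamma$, so this requires assuming $p>0$ up to and across the interface, and manipulating the factor $p^{\infty\,2}$ against a measure supported on $\Gamma$ is delicate precisely because $p$ jumps there (Proposition \ref{p1p2jump}). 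You should therefore present this last step as a formal argument under that additional positivity assumption.
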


%\begin{remark} Note that taking $q=0$ and $(v_1,v_2)$ given by Proposition \ref{prop:LaxMilgram2} gives a special solution of this problem.
%\end{remark}

To obtain more regularity on the solution, we assume the following additional condition on the domains:
\begin{eqnarray}
&&\partial \Omega \text{ is smooth } (\mathcal{C}^{\infty}). \label{cc4}
\end{eqnarray}
A typical situation where the domains satisfy the assumption above is the situation where one tissue is encompassed within the other one, such as represented in Fig. \ref{fig:schema}. We obtain the following result.

\begin{theorem}\label{reg+} [Further regularity, two concentric species] Let $\Theta$, $\Omega_1$ and $\Omega_2$ be bounded domains as in (\ref{cc1})--(\ref{cc3}), which satisfy furthermore \eqref{cc4}. Let $\beta_1, \beta_2, g_1,g_2>0$ satisfy the condition (\ref{cdd1}). Let $q\in \mathcal{C}^{\infty}(\overline{\Omega_1})\cap \mathcal{C}^{\infty}(\overline{\Omega_2})$, then the solution $(v_1,v_2)$ to System $(S_2)$ lies in $\mathcal{C}^{\infty}(\overline{\Omega_1})\cap\mathcal{C}^{\infty}( \overline{\Omega_2}) \cap \mathcal{C}^{\infty}( \overline{\Omega^c})$.
\end{theorem}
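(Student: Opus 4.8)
The plan is to bootstrap from the $\mathcal{C}^{1,\alpha'}$ regularity already supplied by Theorem \ref{wellpreg} (applicable here since $\mathcal{C}^\infty(\overline{\Omega_i})\subset\mathcal{C}^{0,\mu}(\overline{\Omega_i})$ for every $\mu\in(0,1)$), upgrading it to $\mathcal{C}^\infty$ up to each interface by iterating elliptic regularity for the transmission problem $(T_2)$ of Proposition \ref{prop:TP}. The starting observation is that each subdomain operator $v\mapsto -\beta_i\Delta v-\frac{1}{g_j}\nabla(\nabla\cdot v)$ is strongly elliptic: its principal symbol $\beta_i|\xi|^2\mathrm{Id}+\frac{1}{g_j}\,\xi\otimes\xi$ is positive definite for $\xi\neq 0$. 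Consequently, on any open set compactly contained in $\Omega_1$, $\Omega_2$ or $\Omega^c$, the interior theory for strongly elliptic constant-coefficient systems with smooth right-hand side ($-\nabla q$, resp. $0$) gives $v_1,v_2\in\mathcal{C}^\infty$ in the interior. Near $\partial\Theta$, which lies in $\Omega^c$ where the two equations decouple into Brinkman equations $-\beta_i\Delta v_i+v_i=0$ with homogeneous Dirichlet data on the smooth boundary, classical up-to-the-boundary regularity applies directly.

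The heart of the matter is regularity up to the interfaces. The role of the new hypothesis \eqref{cc4} is precisely to remove the geometric obstruction responsible for the $\tfrac14$ ceiling in Theorem \ref{wellpreg}: in the concentric configuration of Fig.~\ref{fig:schema}, ``$\partial\Omega$ smooth'' forces the triple junctions where $\Gamma$, $\Gamma_1$ and $\Gamma_2$ would otherwise meet to be absent, so that the interfaces across which we must pass reduce to disjoint smooth closed curves (in the model case $\Gamma_1$ is empty, $\Gamma=\partial\Omega_1$ and $\partial\Omega=\Gamma_2$). I would then work locally: fix a point on an interface, flatten it by a smooth diffeomorphism, and treat $(T_2)$ as a two-sided elliptic transmission problem carrying the two natural Neumann-type jump conditions and the Dirichlet continuity conditions recorded in $(T_{2,\Gamma})$ and $(T_{2,\Gamma_2})$, whose data now involve only the smooth function $q$ and traces of $\nabla\cdot v_i$.

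To gain regularity I would use the tangential difference-quotient method: tangential translates of the solution solve the same transmission problem with right-hand side and transmission data that are one degree smoother (using smoothness of $q$ and of the flattening map), so uniform bounds on difference quotients yield one extra tangential derivative, which re-injected into the Schauder estimate for the transmission problem gives the corresponding Hölder gain. The missing normal derivatives are then recovered algebraically from the equations: by strong ellipticity the coefficient matrix of the top-order pure normal derivative, namely $\beta_i\mathrm{Id}+\frac{1}{g_j}\,\vec\nu\otimes\vec\nu$, is invertible, so each highest normal derivative is expressed through already-controlled tangential and lower-order terms. Iterating raises the regularity by one order on each side of each interface, and the conclusion follows by induction.

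The main obstacle I anticipate is establishing the basic Schauder estimate for the transmission problem with the precise mixed data of $(T_{2,\Gamma})$–$(T_{2,\Gamma_2})$, i.e.\ verifying that these transmission conditions satisfy the Lopatinski–Shapiro (complementing) condition, so that Agmon–Douglis–Nirenberg-type estimates hold and the difference-quotient bootstrap closes. This is where condition \eqref{cdd1} should intervene, guaranteeing both that the coupled system stays elliptic and that the algebraic system determining the jumps of the normal derivatives across $\Gamma$ is nonsingular. Checking this complementing condition for the full vector-valued, $v_1$–$v_2$-coupled system, rather than for a single scalar Brinkman equation, while carefully tracking the coupling through the $\nabla(\nabla\cdot)$ terms in the normal-derivative reconstruction, is the delicate part of the argument.
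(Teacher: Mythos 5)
Your route is genuinely different from the paper's, and it is worth saying how. The paper's entire proof is a one-line application of Proposition 1.4 of \cite{li}: the hypotheses of that black-box result are exactly the items already secured, namely the integrated coercivity (``weak ellipticity'') of the bilinear form $B$, verified in Proposition \ref{prop:ellip} with $\lambda=\min\left(\beta_1-\frac{1}{4g_2},\beta_2-\frac{1}{4g_1}\right)>0$ thanks to \eqref{cdd1}, and the geometric requirement that no point belongs to the closures of more than two subdomains, which is what \eqref{cc4} encodes (you read the role of \eqref{cc4} correctly, although attributing the $\tfrac14$ ceiling of Theorem \ref{wellpreg} to triple junctions is not exact: that cap is a feature of the $\mathcal{C}^{1,\alpha'}$ estimate of \cite{li} even in junction-free geometries). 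Your proposal instead reconstructs, from scratch, the proof of such a result: interior ellipticity, boundary regularity near $\partial\Theta$, flattening plus tangential difference quotients at the interfaces, algebraic recovery of normal derivatives. That program is the classical one and can be made to work. But the step you explicitly defer --- the Schauder/ADN estimate for the transmission problem, i.e.\ the complementing condition for the interface conditions of $(T_2)$ --- is precisely the content of the cited Proposition 1.4, and it is the heart of the matter: the transmission conditions in $(T_2)$ are the natural (conormal) conditions of the coercive form $B$, and it is this variational structure under \eqref{cdd1} that guarantees the complementing condition holds. As written, your argument is therefore a plan with its core step missing; closing it means either carrying out that verification for the full $4\times4$ system or invoking a Li--Nirenberg-type theorem, which collapses your proof to the paper's.

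There is also a concrete structural error you should fix before any of the bootstrap can be run. The system in each tissue domain is \emph{not} a pair of decoupled operators $v\mapsto-\beta_i\Delta v-\frac{1}{g_j}\nabla(\nabla\cdot v)$: by $(T_{2,\Omega_1})$, both equations in $\Omega_1$ carry the same term $\frac{1}{g_1}\nabla(\nabla\cdot v_1)$, acting on $v_1$ in the equation for $v_2$ as well. The principal symbol on $\Omega_1$ is the block-triangular matrix
\begin{equation*}
A(\xi)=\begin{pmatrix} \beta_1|\xi|^2\,\mathrm{Id}+\frac{1}{g_1}\,\xi\otimes\xi & 0\\[2pt] \frac{1}{g_1}\,\xi\otimes\xi & \beta_2|\xi|^2\,\mathrm{Id}\end{pmatrix},
\end{equation*}
whose Legendre positivity is \emph{not} automatic: one has $\langle A(\xi)\eta,\eta\rangle\ge \beta_1|\xi|^2|\eta_1|^2+\left(\beta_2-\frac{1}{4g_1}\right)|\xi|^2|\eta_2|^2$, and positivity genuinely fails when $\beta_2 g_1$ is small (take $\eta_1,\eta_2$ parallel to $\xi$ with opposite signs). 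So \eqref{cdd1} is exactly what makes the coupled system strongly elliptic; your opening claim that the subdomain symbols are positive definite for all positive parameters would render \eqref{cdd1} irrelevant, and it contradicts your own (correct) closing remark that \eqref{cdd1} must guarantee ellipticity of the coupled system. Two repairs follow the same pattern: the matrix in your normal-derivative reconstruction is the block-triangular one above with $\xi$ replaced by $\vec\nu$ (still invertible, so that step survives), and your interior step actually becomes easier once the triangular structure is acknowledged --- in $\Omega_1$, $v_1$ solves a closed, unconditionally strongly elliptic equation, after which $v_2$ solves a Brinkman equation with smooth right-hand side, and symmetrically in $\Omega_2$.
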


%%%%%%%%%%%%%%%%%%%%%%%%%%%%
\subsection{\hyperref[sec:tran_pres]{Enforced segregation and propagated segregation.}}
%%%%%%%%%%%%%%%%%%%%%%%%%%%%

We recall that in the VM, defined in \eqref{n1v}--\eqref{pv}, there is no enforced segregation, contrarily to the ESVM. However, the VM is endowed with the segregation property, i.e, initially segregated densities remain segregated at all times. This is stated in the following proposition.
 
\begin{proposition}\label{segprop}(Formal) [Segregation property, VM]
Let $ n_{1},\, n_{2}$ solve \eqref{n1v}--\eqref{pv}. If the initial densities are fully segregated, that is, $ n_{1}^{ini}n_{2}^{ini}=0$ on $\mathbb{R}^d$, then the densities remain segregated for all times: 
\begin{equation}
n_{1}n_{2}(t,x)=0, \; \forall (t,x) \in [0; +\infty) \times \mathbb{R}^d.\label{fullsegeps}
\end{equation}

\end{proposition}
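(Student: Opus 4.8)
The plan is to prove that the product $n_1 n_2$ satisfies a transport equation with no source term capable of creating positive mass where it was initially zero. First I would write evolution equations for $n_1$ and $n_2$ in non-conservative (advective) form: expanding the divergences in \eqref{n1v}--\eqref{n2v} gives
\begin{equation}
\partial_t n_i + v_i \cdot \nabla n_i = n_i\big(G_i(p_\epsilon) - \nabla \cdot v_i\big), \quad i=1,2. \nonumber
\end{equation}
Multiplying the first equation by $n_2$ and the second by $n_1$ and adding, I obtain an equation for the product $w \coloneqq n_1 n_2$:
\begin{equation}
\partial_t w + n_2\, v_1 \cdot \nabla n_1 + n_1\, v_2 \cdot \nabla n_2 = w\big(G_1(p_\epsilon)+G_2(p_\epsilon) - \nabla \cdot v_1 - \nabla \cdot v_2\big). \nonumber
\end{equation}

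The difficulty with this expression is that the advective terms $n_2 v_1 \cdot \nabla n_1 + n_1 v_2 \cdot \nabla n_2$ do not immediately combine into $v \cdot \nabla w$ for a single velocity field, because the two tissues move with different velocities $v_1 \neq v_2$. This is precisely where the segregation structure must be exploited: where $w=0$ one of the two densities vanishes, and I expect the key observation to be that on the (open) support of $n_1$ one has $n_2=0$ and \emph{vice versa}, so that the two advective terms are never simultaneously active against a nonzero gradient of the other density. The cleanest formal route is to interpret the equation for $w$ as a linear transport-reaction equation: I would bound the troublesome terms by $\nabla n_1$, $\nabla n_2$ and the (smooth, bounded) velocities, and argue that $w \equiv 0$ is the unique solution emanating from the initial datum $w^{ini}=0$ by a Gronwall/uniqueness argument along characteristics. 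Since the equation for $w$ is linear and homogeneous in $w$ once we are on the set where segregation holds, the zero solution propagates.

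Concretely, the step I would carry out is to recast the problem as: assuming sufficient regularity so that characteristics are well defined, the trajectories $X_i(t,x)$ associated to the velocity fields $v_i$ transport each density, and full segregation at $t=0$ means the supports of $n_1^{ini}$ and $n_2^{ini}$ are disjoint. I would then show these supports remain disjoint because each is transported by its own flow and the congestion constraint $n<1$ together with the common pressure $p_\epsilon(n)$ prevents the supports from overlapping; more carefully, I would test the equation for $w$ against a suitable function (or simply invoke the structure $\partial_t w + \text{(transport)} = w\,(\text{bounded})$) to get $\frac{d}{dt}\|w(t)\|_{L^1} \le C\|w(t)\|_{L^1}$, whence $w(t)\equiv 0$ by Gronwall. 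The main obstacle, which I would flag as the delicate formal point, is the handling of the mixed advective terms: unlike the single-velocity case where $w$ solves a genuine transport equation, here the distinct velocities mean one cannot write a closed transport equation for $w$ without first using that $n_1$ and $n_2$ have disjoint supports. I expect the proof to remain formal (as the proposition is labelled) precisely because making this rigorous would require controlling the interface and the regularity of the supports under two different flows — the crux is arguing that the gradient terms $v_1\cdot\nabla n_1$ and $v_2\cdot\nabla n_2$ are multiplied by $n_2$ and $n_1$ respectively, which vanish on the support of the conflicting gradient, so no new overlap can be generated.
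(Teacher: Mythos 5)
Your algebra for the product $w\coloneqq n_1n_2$ is correct, and you have correctly located the crux: with two distinct velocities the equation for $w$ is not a closed transport--reaction equation. But the way you propose to close the argument does not work, and this is a genuine gap rather than a technicality. Writing your mixed terms as
\begin{equation}
n_2\,v_1\cdot\nabla n_1+n_1\,v_2\cdot\nabla n_2 \;=\; v_1\cdot\nabla w \;+\; n_1\,(v_2-v_1)\cdot\nabla n_2,\nonumber
\end{equation}
the leftover source $n_1(v_2-v_1)\cdot\nabla n_2$ is \emph{not} $O(w)$, neither pointwise nor in $L^1$: at a point where $n_2=0$ and $n_1>0$ one only knows (for smooth $n_2\ge 0$) that $\nabla n_2=0$ there, and near such a point the best generic bound is $|\nabla n_2|\lesssim \sqrt{n_2}$ (think of $n_2(x)=x^2$), so the source is at best $O(\sqrt{w})$. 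A differential inequality $\frac{d}{dt}\|w\|_{L^1}\le C\|w\|_{L^1}^{1/2}$ with $\|w(0)\|_{L^1}=0$ does \emph{not} imply $w\equiv0$ (compare $\dot y=C\sqrt{y}$, $y(0)=0$, which has nonzero solutions), so your Gronwall step collapses exactly where it is needed. The alternative you sketch --- ``the factors $n_2$ and $n_1$ vanish on the support of the conflicting gradient'' --- is circular: it assumes the supports are disjoint at time $t$, which is the conclusion. A first-breakdown-time/continuity argument does not rescue this, because at the critical time you still lack the linear-in-$w$ bound required to propagate.

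The paper avoids this obstruction by a different choice of unknown: the population fraction $c\coloneqq n_1/n$ (on $\{n>0\}$), with segregation encoded as $c(1-c)=0$. Using \eqref{n1v}--\eqref{n2v}, the terms that in your variable are unmanageable combine, in the variable $c$, into a genuine transport term plus sources all carrying the factor $c(1-c)$:
\begin{equation}
\frac{1-c}{n}\nabla n_1\cdot v_1-\frac{c}{n}\nabla n_2\cdot v_2
=\bigl[(1-c)v_1+c\,v_2\bigr]\cdot\nabla c+\frac{c(1-c)}{n}\nabla n\cdot(v_1-v_2),\nonumber
\end{equation}
so that $c$ solves $\partial_t c+\tilde v\cdot\nabla c+c(1-c)R=0$ with $\tilde v=(1-c)v_1+c\,v_2$ and $R$ a (formally bounded) zeroth-order coefficient. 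Hence $u\coloneqq c(1-c)$ solves a \emph{linear, homogeneous} transport equation along the single averaged field $\tilde v$, and $u^{ini}=0$ propagates by Gronwall along characteristics --- precisely the structure your product variable fails to provide. If you want to keep a product-type argument, you must first perform this (or an equivalent) change of variables; as it stands, your proof has a gap at its central step.
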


The incompressible limit of the VM is formally obtained by taking $\epsilon \rightarrow{}0$. Then we obtain the following system L-VM, for all $(t,x) \in [0; +\infty)\times \mathbb{R}^{d} : $
\begin{eqnarray}
    && \partial_{t}n_{1}^{\infty}+ \nabla \cdot (n_{1}^{\infty}v_{1}^{\infty}) =n_{1}^{\infty}G_{1}(p^{\infty}),
     \label{n1lim}\\
   && \partial_{t}n_{2}^{\infty}+ \nabla \cdot (n_{2}^{\infty}v_{2}^{\infty}) =n_{2}^{\infty}G_{2}(p^{\infty}),\label{n2lim}
\\
    && -\beta_{1}\Delta v_{1}^{\infty}+v_{1}^{\infty}=-\nabla p^{\infty}, \\
      && -\beta_{2}\Delta v_{2}^{\infty}+v_{2}^{\infty}=-\nabla p^{\infty}, 
\end{eqnarray}
and the following relation holds
\begin{equation}
    p^{\infty}(1-n^{\infty})=0, \; \text{where} \; n^{\infty}=n_{1}^{\infty}+n_{2}^{\infty}. \label{relationpn}
\end{equation}
The complementary relation prescribing the dynamics of the pressure due to congestion at the limit reads
\begin{eqnarray}
        p^{\infty^{2}}\bigg(n_{1}^{\infty} \nabla \cdot v_{1}^{\infty}+n_{2}^{\infty} \nabla \cdot v_{2}^{\infty}\bigg)&=&
        p^{\infty^2}\bigg(n_{1}^{\infty}G_{1}(p^{\infty})+n_{2}^\infty G_{2}(p^{\infty})\bigg).\label{plim}
\end{eqnarray} 

\begin{remark}
Using the same method as for the VM, we can show the segregation property holds for the L-VM, that is, when initially segregated, the tissues remain segregated for all times:
\begin{equation}
n_1^{\infty}n_2^{\infty}=0, \; \forall (t,x) \in [0; +\infty) \times \mathbb{R}^d. \label{seg}
\end{equation}
Such property can also be directly inherited from the segregation property of the VM when passing to the limit in $\epsilon$.
\end{remark}

The main difference between the ESVM and the VM lies in the dynamics of the segregation. In fact, even for initially mixed densities, the ESVM system will lead at the limit to the relation $n_1^\infty n_2^\infty =0$. This is not the case for the VM, which does not enforce segregation at the limit.
However, taking initially segregated densities for the ESVM and for the VM, we get that $q^{m}(n_1^{\scriptsize{ini}}n_2^{\scriptsize{ini}})=0$ for the ESVM, and taking also $\alpha=0$ we find that both systems coincide. As a consequence their limits will also coincide in this case. The equivalence of the systems L-ESVM and L-VM when the initial densities $n_1^{\infty,ini}, n_2^{\infty,ini}$ are segregated can be obtained directly, taking furthermore $q^{\infty,ini}=q^{\infty}(t=0,\cdot)=0$ in the L-ESVM (Figure \ref{fig:link}). The transmission problem for the stationary L-VM is obtained by taking $q=0$ in $(T_2)$. Well-posedness and regularity for the stationary L-VM are then straightforward from Theorems \ref{wellpreg} and \ref{reg+}, when the assumptions (\ref{cc1})-(\ref{cc3}), and (\ref{cdd1})-\eqref{cc4} are still assumed.

\subsection{On the pressure jump in the stationary L-VM}

Finally, we show a striking feature: a pressure jump at the boundaries and interfaces in the stationary L-VM (or equivalently, the stationary L-ESVM when $q=0$). In the following we consider linear growth functions as in \eqref{lineargrowth} with $g_1,g_2,p_1^\ast,p_2^\ast>0$. Then using \eqref{relationpn} and \eqref{plim} we can express the pressure on $\Omega_1$, $\Omega_2$ and $\Omega^c$ as a function of the velocities $v_1$ and $v_2$ as: 

\begin{eqnarray}
p = \left\{
    \begin{array}{ll}
        p_1^{\ast}-\frac{1}{g_1}(\nabla \cdot v_1)_{\Omega_1}  & \mbox{ on } \Omega_1, \vspace{0.2cm}\\
        p_2^{\ast}-\frac{1}{g_2}(\nabla \cdot v_2)_{\Omega_2}  & \mbox{ on }  \Omega_2,\vspace{0.2cm}\\
        0  & \mbox{ on }  \Omega^c.
    \end{array}\label{expP}
\right.
\end{eqnarray}

\begin{proposition}[Pressure jump, stationary L-VM]\label{p1p2jump} Suppose (\ref{cc1})-(\ref{cc3}), (\ref{cdd1}) and \eqref{cc4}. Let $q=0$ and let $(v_1,v_2)\in H^{1}_0(\Theta)^{2}\times H^{1}_0(\Theta)^2$ be the solution of ($S_2$) given by Theorem \ref{wellpreg}. Then, the pressure $p$ of the L-VM given by \eqref{expP} is discontinuous across the interfaces $\Gamma_1 \cup \Gamma_2\cup \Gamma$. 
\end{proposition}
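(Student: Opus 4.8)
The plan is to argue by contradiction: I assume that the pressure $p$ defined by \eqref{expP} extends continuously across $\Gamma_1\cup\Gamma_2\cup\Gamma$, and I aim to contradict $p_1^\ast,p_2^\ast>0$. The first step reduces continuity of $p$ to $\mathcal C^1$ regularity of the velocities. Since $q=0$, both $v_1$ and $v_2$ obey the Brinkman law with the \emph{same} forcing $-\nabla p$ (Eqs. \eqref{vv1}--\eqref{vv2}); writing this identity in the sense of distributions across an interface with unit normal $\vec\nu$ yields, for $i=1,2$, the jump relation $\beta_i\,[\partial_{\vec\nu} v_i]=[p]\,\vec\nu$, which is precisely the content of the transmission conditions of Proposition \ref{prop:TP} with $q=0$. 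Continuity of $p$ means $[p]=0$ on each interface, hence $[\partial_{\vec\nu} v_1]=[\partial_{\vec\nu}v_2]=0$; combined with the continuity of $v_1,v_2$ (Theorem \ref{wellpreg}) and the one-sided smoothness of Theorem \ref{reg+} (applicable since $q=0\in\mathcal C^\infty$), this makes $v_1,v_2$ globally $\mathcal C^1$ across $\Gamma_1,\Gamma_2,\Gamma$. Equivalently, $\nabla p\in L^2(\Theta)$ and $v_1,v_2\in H^2(\Theta)$, with $p\in\mathcal C^0(\overline\Theta)$, $p=0$ on $\overline{\Omega^c}$, so $p=0$ on $\Gamma_1\cup\Gamma_2$ and the two traces of $p$ agree on $\Gamma$.

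Next I extract the scalar equation for the pressure. Taking the divergence of \eqref{vv1} (resp. \eqref{vv2}) and inserting $\nabla\cdot v_i=g_i(p_i^\ast-p)$ from \eqref{expP}, I obtain on each tissue
\[
-\Delta p+\lambda_i\,p=\lambda_i\,p_i^\ast\quad\text{in }\Omega_i,\qquad \lambda_i=\tfrac{g_i}{1+\beta_i g_i}>0 .
\]
The source is strictly positive and $p\ge0$ on $\partial\Omega_i$ (it vanishes on $\Gamma_1,\Gamma_2$ by the previous step and is a limit of nonnegative interior values on $\Gamma$), so the strong maximum principle forces $p>0$ inside each $\Omega_i$, and Hopf's lemma gives a strict sign for the inward normal derivative at the free boundary, $(\partial_{\vec\nu}p)_{\Omega_i}<0$ on $\Gamma_i$. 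This pins down the expected sign of the jump (the viscous normal stress $\beta_i\partial_{\vec\nu}v_i$ absorbing the positive interior pressure), and it is the structural input I would carry into the final step.

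The last step is where I expect the main obstacle. The facts above are, by themselves, \emph{consistent} with continuity: one checks that $p>0$ inside, $p=0$ outside, and the one-sided relations $(\partial_{\vec\nu}p)_{\Omega_i}<0=(\partial_{\vec\nu}p)_{\Omega^c}$ do not forbid $v_i\in H^2(\Theta)$ (the curl of $v_i$ is automatically smooth, and $\nabla\cdot v_i$ may sit in $H^1$ with a jumping normal derivative). Hence the contradiction must be genuinely global. The route I would pursue is to feed the $\mathcal C^1$ Cauchy data of $v_i$ on $\Gamma_i$ into the exterior homogeneous problem $-\beta_i\Delta v_i+v_i=0$ on $\Omega^c$ with $v_i=0$ on $\partial\Theta$, which over-determines $v_i$ there, and to propagate this rigidity by unique continuation for the constant-coefficient (analytic-hypoelliptic) Brinkman operator, feeding it back into the coupled interior system $(T_{2,\Omega_i})$ through the shared forcing and the relation $\Delta(\beta_1v_1-\beta_2v_2)=v_1-v_2$ (here $\beta_1\ne\beta_2$ and condition \eqref{cdd1} are expected to enter). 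Making this over-determination/unique-continuation argument on the non-characteristic interfaces rigorous is the crux. As a parallel strategy I would test the scalar pressure equation, via Green's identity, against a positive solution of $-\Delta\phi+\lambda_i\phi=0$ to show $\int_{\Gamma_i}(p)_{\Omega_i}\,dS>0$ directly, which already rules out continuity on all of $\Gamma_1\cup\Gamma_2\cup\Gamma$; the same scheme applies at the internal interface $\Gamma$, where $p_1^\ast\ne p_2^\ast$ (or $g_1\ne g_2$) obstructs the simultaneous matching of the two pressure traces.
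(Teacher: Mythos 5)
Your opening reduction coincides with the paper's: if $p$ were continuous across $\Gamma_1\cup\Gamma_2\cup\Gamma$, the transmission conditions \eqref{jumpyjumps} annihilate the jumps of the normal derivatives of $v_1,v_2$, and together with the piecewise regularity this upgrades $(v_1,v_2)$ to $H^2(\Theta)^2$ with the interior trace relations $(\nabla \cdot v_1)_{\Omega_1}=g_1p_1^{\ast}$ on $\Gamma_1$, $(\nabla \cdot v_2)_{\Omega_2}=g_2p_2^{\ast}$ on $\Gamma_2$ and the matching relation on $\Gamma$ (these are exactly \eqref{nojumpG1}--\eqref{nojumpG}). Your scalar pressure equation $-\Delta p+\lambda_i p=\lambda_i p_i^{\ast}$ with $\lambda_i=g_i/(1+\beta_i g_i)$ and the maximum-principle/Hopf consequences are also correct. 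But, as you yourself concede, none of this produces the contradiction, and the two routes you sketch do not close the gap. Testing against a solution of $-\Delta\phi+\lambda_i\phi=0$ only gives $-\int_{\partial\Omega_i}(\partial_{\vec\nu}p)\,\phi+\int_{\partial\Omega_i}p\,\partial_{\vec\nu}\phi=\lambda_i p_i^{\ast}\int_{\Omega_i}\phi$; since Hopf makes $-\int_{\Gamma_i}(\partial_{\vec\nu}p)\,\phi$ positive, and since on $\Gamma$ the common trace of $p$ need not vanish, this identity is perfectly consistent with continuity — it does not show $\int_{\Gamma_i}(p)_{\Omega_i}\,dS>0$. The unique-continuation/over-determination route is not carried out, and both sketches lean on non-degeneracies ($\beta_1\neq\beta_2$, $p_1^{\ast}\neq p_2^{\ast}$ or $g_1\neq g_2$) that the proposition does not assume: the jump must exist for \emph{all} parameters satisfying \eqref{cdd1}, including $\beta_1=\beta_2$, $g_1=g_2$, $p_1^{\ast}=p_2^{\ast}$, so any argument requiring such asymmetry cannot be the right one.

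What is missing is precisely the paper's core mechanism (Proposition \ref{v1v2h2}): a global energy argument showing that $(v_1,v_2)\in H^2(\Theta)^2$ is impossible. One multiplies the piecewise equations of $(T_{VM})$ by $-\Delta v_1$ (resp.\ $-\Delta v_2$), integrates over $\Omega_1$, $\Omega_2$, $\Omega^c$ and sums. After two Green's identities, the interface integrals carry the densities $\frac{1}{g_1}\nabla\cdot v_1$ and $\frac{1}{g_2}\nabla\cdot v_2$, which the trace relations \eqref{nojumpG1}--\eqref{nojumpG} replace by the \emph{constants} $p_1^{\ast}$, $p_2^{\ast}$; the surviving boundary terms then take the form $p_j^{\ast}\int_{\partial\Omega_j}\left[\nabla(\nabla\cdot v_1)-\Delta v_1\right]\cdot\vec\nu$, and these vanish identically by the divergence theorem because $\nabla\cdot\left[\nabla(\nabla\cdot v_1)-\Delta v_1\right]=0$. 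One is left with a quadratic form that is coercive under \eqref{cdd1} (after Young's inequality) and equal to zero, forcing $\nabla v_1\equiv 0$ on $\Theta$, which is incompatible with $(\nabla\cdot v_1)_{\Omega_1}=g_1p_1^{\ast}>0$ on $\Gamma_1$. This step — valid for every admissible choice of parameters — is the crux your proposal lacks; without it, or a genuine substitute of the same generality, the proof is incomplete.
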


\begin{remark}
For the case of the L-ESVM, it is clear from $(T_{2,\Gamma_1})$, $(T_{2,\Gamma_2})$, and $(T_{2,\Gamma})$ that for the pressure to be continuous on the interfaces, it is required for $q$ to be identically zero on all interfaces $\Gamma_1 \cup \Gamma_2\cup \Gamma$. 
The pressure jump of the L-ESVM is then obvious for any non-vanishing $q$ at any point of the interfaces $\Gamma$, $\Gamma_1$ and $\Gamma_2$. Further work is needed in the general case.
\end{remark}

%%%%%%%%%%%%%%%%%%%%%%%%%%%%%%%%%%%%%%%%%%%%%%%%%%%%%%%%%%%
%%%%%%%%%%%%%%%%%%%%%%%%%%%%%%%%%%%%%%%%%%%%%%%%%%%%%%%%%%%
%%%%%%%%%%%%%%%%%%%%%%%%%%%%%%%%%%%%%%%%%%%%%%%%%%%%%%%%%%%
\section{Incompressible limit of the two species viscous model}\label{sec:proofincomp}
%%%%%%%%%%%%%%%%%%%%%%%%%%%%%%%%%%%%%%%%%%%%%%%%%%%%%%%%%%%
%%%%%%%%%%%%%%%%%%%%%%%%%%%%%%%%%%%%%%%%%%%%%%%%%%%%%%%%%%%
%%%%%%%%%%%%%%%%%%%%%%%%%%%%%%%%%%%%%%%%%%%%%%%%%%%%%%%%%%%

%%%%%%%%%%%%%%%%%%%%%%%%%%%%%%%%%%%%%%%%%%%%%%%%%%%%%%%%%%%
\subsection{Computation of the incompressible limit}
 %%%%%%%%%%%%%%%%%%%%%%%%%%%%%%%%%%%%%%%%%%%%%%%%%%%%%%%%%%%

In this section we perform the proof of the formal limit of the ESVM defined by the system \eqref{n1new}-\eqref{pnew}.

\begin{proof}[Proof of Theorem \ref{formalim}] The incompressible limit of the system defined by the equations \eqref{n1new}-\eqref{pnew}, is obtained by taking $\epsilon \rightarrow{0}$, then  $m \rightarrow{\infty}$ and $\alpha \rightarrow{0}$.

First, the equation \eqref{pnew} can be written as,
$(1-n)p_{\epsilon}=\epsilon n$. Taking the limit as $\epsilon \rightarrow{0}$ we get,
\begin{equation}
    p^{\infty}(1-n^{\infty})=0.\label{pnl}
\end{equation}
Then, straightforward computations show that the expression of $q_m$ in Eq. \eqref{qmnew} gives,
\begin{equation*} \bigg(\frac{m-1}{m}q_m+1\bigg)^{\dfrac{m}{m-1}}=(1+r)^{m}=\bigg(\frac{m-1}{m}q_m+1\bigg)(1+r).
\end{equation*}
Passing to the limit when $m \rightarrow +\infty $, and using that we assume $q_m \xrightarrow{m\rightarrow{}\infty}q^\infty$, we obtain full segregation of the species at the limit,
\begin{equation}
    r^{\infty}=n_{1}^{\infty}n_{2}^{\infty}=0.\label{segq}
\end{equation}

Now, taking the limit in \eqref{n1new}--\eqref{p2new}, when $\epsilon \rightarrow 0, m\rightarrow + \infty$ and $\alpha \rightarrow 0$, under the assumption that all quantities converge, we get the system \eqref{nn1}-\eqref{pp1} for all $(t,x) \in [0; + \infty) \times \mathbb{R}^{d}$.

It now remains to obtain an equation to characterize the evolution of $p^\infty$ and $q^\infty$.

%%%%%%%%%%%%%%%%%%%%%%%%%%%%%%%%%%%%%%%%%%%%%%%%%%%%%%%%%%%
\paragraph{Equation for the congestion pressure.}
%%%%%%%%%%%%%%%%%%%%%%%%%%%%%%%%%%%%%%%%%%%%%%%%%%%%%%%%%%%

We obtain the equation satisfied by the total density $n=n_{1}+n_{2}$ by summing \eqref{n1new} and \eqref{n2new}:
\begin{equation}
    \partial_{t}n+ \nabla\cdot(n_{1}v_{1}+n_{2}v_{2}) +\alpha \nabla \cdot (n_{1} \nabla (\Delta n_{1})+n_{2} \nabla (\Delta n_{2}))=n_{1}G_{1}(p_{1})+n_{2}G_{2}(p_{2}).\label{n}
\end{equation}
To recover the equation that controls the pressure inside the domain $\Omega (t)$, we multiply \eqref{n} by $p'_{\epsilon}(n)=\frac{1}{\epsilon}(p_{\epsilon}+\epsilon)^2$, and obtain the following equation satisfied by $p_{\epsilon}$:
\begin{eqnarray}
       \epsilon \partial_{t}p_{\epsilon}+(p_{\epsilon}+\epsilon)^2\nabla \cdot (n_{1}v_{1}+n_{2}v_{2})&=&-(p_{\epsilon}+\epsilon)^2 \alpha \nabla \cdot(n_{1} \nabla (\Delta n_{1})+n_{2} \nabla (\Delta n_{2}))\nonumber \\
       &&+(p_{\epsilon}+\epsilon)^2(n_{1}G_{1}(p_{1})+n_{2}G_{2}(p_{2})), \nonumber
\end{eqnarray}
We pass to the limit $\epsilon \rightarrow {0}$ in the latter equation, then  $m \rightarrow {\infty}$, and $\alpha \rightarrow {0}$, we get at the limit Eq. \eqref{congpress}.
   
%%%%%%%%%%%%%%%%%%%%%%%%%%%%%%%%%%%%%%%%%%%%%%%%%%%%%%%%%%%
\paragraph{Equation for the repulsion pressure.}
%%%%%%%%%%%%%%%%%%%%%%%%%%%%%%%%%%%%%%%%%%%%%%%%%%%%%%%%%%%

Let $K_1=n_1q_m$, then the equation on $n_1 \partial_t q_m$ is as follows:
\begin{eqnarray}
n_1\partial_t q_m&+& r(q_m)'\nabla \cdot (n_1v_1)+n_1^2(q_m)'\nabla \cdot (n_2v_2) \nonumber \\
&=&n_1r(q_m)'(G_1(p_1)+G_2(p_2))
-\alpha r (q_m)'\nabla\cdot (n_{1} \nabla  (\Delta n_{1})) -\alpha n_1^2 (q_m)'\nabla\cdot (n_{2} \nabla  (\Delta n_{2})).\nonumber
\end{eqnarray}
Recalling that $q_m(r)=\frac{m}{m-1}[(1+r)^{m-1}-1],$ we compute $q'(r)=m(1+r)^{m-2}$, and we use the following expressions
$$(1+r)^{m-1}=\frac{m-1}{m}q_m+1 \quad \text{and \;}  \log(\frac{m-1}{m}q_m+1)=(m-1)\log(1+r).$$
Then we can write 
$$ r(q_m)'=\frac{r}{\log(1+r)}\frac{m}{m-1}(\frac{m-1}{m}q_m+1)^{\frac{m-2}{m-1}}\log(\frac{m-1}{m}q_m+1).$$
Replacing the expression of $r(q_m)'$ in the above equation we obtain:

\begin{eqnarray}
n_1\partial_t q_m&+&\nonumber \frac{r}{\log(1+r)}\frac{m}{m-1}(\frac{m-1}{m}q_m+1)^{\frac{m-2}{m-1}}\log(\frac{m-1}{m}q_m+1)\nabla \cdot (n_1v_1)\\\nonumber
&+&\frac{r}{\log(1+r)}\frac{m}{m-1}(\frac{m-1}{m}q_m+1)^{\frac{m-2}{m-1}}\log(\frac{m-1}{m}q_m+1)n_1\nabla \cdot v_2 \\\nonumber
&+&n_1^2v_2 \cdot\nabla n_2 (q_m)'\\\nonumber
&=& \frac{r}{\log(1+r)}\frac{m}{m-1}(\frac{m-1}{m}q_m+1)^{\frac{m-2}{m-1}}\log(\frac{m-1}{m}q_m+1)n_1(G_1+G_2)\nonumber\\
&&-\alpha \frac{r}{\log(1+r)}\frac{m}{m-1}(\frac{m-1}{m}q_m+1)^{\frac{m-2}{m-1}}\log(\frac{m-1}{m}q_m+1)  \nabla\cdot (n_{1} \nabla  (\Delta n_{1}))\nonumber \\
&&-\alpha n_1^2 (q_m)'\nabla\cdot (n_{2} \nabla  (\Delta n_{2})). \nonumber
\end{eqnarray}
The equation on $K_1$ becomes: 
\begin{eqnarray}
\partial_t K_1&+&\nonumber \frac{r}{\log(1+r)}\frac{m}{m-1}(\frac{m-1}{m}q_m+1)^{\frac{m-2}{m-1}}\log(\frac{m-1}{m}q_m+1)\nabla \cdot (n_1v_1)\\\nonumber
&+&\frac{r}{\log(1+r)}\frac{m}{m-1}(\frac{m-1}{m}q_m+1)^{\frac{m-2}{m-1}}\log(\frac{m-1}{m}q_m+1)n_1\nabla \cdot v_2 \\\nonumber
&+&n_1^2\nabla n_2 \cdot v_2 (q_m)'+q_m\nabla \cdot (n_1v_1)+q_m \alpha \nabla\cdot (n_{1} \nabla  (\Delta n_{1})) \\\nonumber
&=& \frac{r}{\log(1+r)}\frac{m}{m-1}(\frac{m-1}{m}q_m+1)^{\frac{m-2}{m-1}}\log(\frac{m-1}{m}q_m+1)n_1(G_1+G_2) \nonumber\\
&&-\alpha \frac{r}{\log(1+r)}\frac{m}{m-1}(\frac{m-1}{m}q_m+1)^{\frac{m-2}{m-1}}\log(\frac{m-1}{m}q_m+1)  \nabla\cdot (n_{1} \nabla  (\Delta n_{1}))\nonumber \\
&&-\alpha n_1^2 (q_m)'\nabla\cdot (n_{2} \nabla  (\Delta n_{2}))+q_mn_1G_1.\nonumber
\end{eqnarray}
Moreover we have the following equalities 
$$ n_1^2\nabla n_2 \cdot v_2 (q_m)'=n_1v_2\cdot \nabla q_m  - r(q_m)' \nabla n_1\cdot v_2,$$ $$ \alpha n_1^2 (q_m)'\nabla\cdot (n_{2} \nabla  (\Delta n_{2}))=\alpha [n_1 \nabla q_m -r(q_m)' \nabla n_1]\cdot \nabla \Delta n_2+\alpha n_1 r(q_m)' \Delta ^2 n_2.$$
Then the equation on $K_1$ becomes :
\begin{eqnarray}
\partial_t K_1&+&\nonumber \frac{r}{\log(1+r)}\frac{m}{m-1}(\frac{m-1}{m}q_m+1)^{\frac{m-2}{m-1}}\log(\frac{m-1}{m}q_m+1)\nabla \cdot (n_1v_1)\\\nonumber
&+&\frac{r}{\log(1+r)}\frac{m}{m-1}(\frac{m-1}{m}q_m+1)^{\frac{m-2}{m-1}}\log(\frac{m-1}{m}q_m+1)n_1\nabla \cdot v_2 \\\nonumber
&+&n_1 v_2 \cdot   \nabla q_m - \frac{r}{\log(1+r)}\frac{m}{m-1}(\frac{m-1}{m}q_m+1)^{\frac{m-2}{m-1}}\log(\frac{m-1}{m}q_m+1) \nabla n_1\cdot v_2  \nonumber \\
&&+q_m\nabla \cdot (n_1v_1)+q_m \alpha \nabla\cdot (n_{1} \nabla  (\Delta n_{1})) \nonumber\\
&=& \frac{r}{\log(1+r)}\frac{m}{m-1}(\frac{m-1}{m}q_m+1)^{\frac{m-2}{m-1}}\log(\frac{m-1}{m}q_m+1)n_1(G_1+G_2) \nonumber\\
&&-\alpha \frac{r}{\log(1+r)}\frac{m}{m-1}(\frac{m-1}{m}q_m+1)^{\frac{m-2}{m-1}}\log(\frac{m-1}{m}q_m+1)  \nabla\cdot (n_{1} \nabla  (\Delta n_{1}))\nonumber \\
&&-\alpha n_1 \nabla q_m \cdot \nabla \Delta n_2+\alpha \frac{r}{\log(1+r)}\frac{m}{m-1}(\frac{m-1}{m}q_m+1)^{\frac{m-2}{m-1}}\log(\frac{m-1}{m}q_m+1) \nabla n_1 \cdot \nabla \Delta n_2\nonumber \\
&&-\alpha n_1 \frac{r}{\log(1+r)}\frac{m}{m-1}(\frac{m-1}{m}q_m+1)^{\frac{m-2}{m-1}}\log(\frac{m-1}{m}q_m+1) \Delta ^2 n_2+q_m n_1G_1.\nonumber
\end{eqnarray}
Finally, assuming that the limit to all the quantities exists, and remembering \eqref{segq}, we formally pass to the limit in the above equation when $m$ goes to infinity and $\alpha, \epsilon$ go to zero and obtain on $\Theta$,

\begin{eqnarray}
\partial_t K_1^{\infty}&+&\nonumber (q^{\infty}+1)\log(q^{\infty}+1)\nabla \cdot (n_1^{\infty}v_1^{\infty})+(q^{\infty}+1)\log(q^{\infty}+1)n_1^{\infty}\nabla \cdot v_2^{\infty} \\\nonumber
&+&n_1^{\infty} v_2^{\infty} \cdot   \nabla q^{\infty} - (q^{\infty}+1)\log(q^{\infty}+1)\nabla n_1^{\infty}\cdot v_2^{\infty} +q^{\infty}\nabla \cdot (n_1^{\infty}v_1^{\infty}) \nonumber\\
&=& (q^{\infty}+1)\log(q^{\infty}+1)n_1^{\infty}(G_1(p_1^{\infty})+G_2(p_2^{\infty}))+q^{\infty}n_1^{\infty}G_1(p_1^{\infty}).\nonumber %\label{k1}
\end{eqnarray}
An analogous computation holds for $K_2^{\infty}$ on $\Theta$, leading to Eq. \eqref{u2inf}. This ends the proof of the theorem.
\end{proof}

%%%%%%%%%%%%%%%%%%%%%%%%%%%%%%%%%%%%%%%%%%%%%%%%%%%%%%%%%%%
\subsection{Free boundary problem}
% in the case $n_{1}^{\infty}=0$ in $\Omega_{1}^{c}(t)$ and $n_{2}^{\infty}=0$ in $\Omega_{2}^{c}(t)$.} %%%%%%%%%%%%%%%%%%%%%%%%%%%%%%%%%%%%%%%%%%%%%%%%%%%%%%%%%%%

We now explain how this system generates a free boundary problem. Motivated by the relation \eqref{pnl}, we define the domain,
\begin{eqnarray}
\Omega(t)\coloneqq \{x | \;p^{\infty}( \cdot, t)>0\}, \label{defOmega}
\end{eqnarray}
and we note that
\begin{eqnarray}
\Omega(t)=\{x | \;p^{\infty}( \cdot, t)>0\} \subset \{x |\;n^{\infty}( \cdot, t)=1\}, \; a.e. \label{inclus}
\end{eqnarray}

Using Eq. \eqref{segq} and the inclusion \eqref{inclus}, we can decompose $\Omega(t)$ into two subdomains $\Omega_1(t)$ and $\Omega_2(t)$ such that
\begin{equation}\Omega_{1}(t)=\{x |\; n_{1}^{\infty}=1\}\cap \Omega(t)\; \text{ and } \; \Omega_{2}(t)=\{x |\; n_{2}^{\infty}=1\} \cap \Omega(t), \nonumber
\end{equation}
with
\begin{equation}\Omega_{1}(t) \cap \Omega_{2}(t)=\varnothing \; \text{and} \; \Omega_{1}(t) \cup \Omega_{2}(t)=\Omega(t). \nonumber
\end{equation}

In fact, in (\ref{inclus}), the two domains coincide almost everywhere, for almost all times. Indeed, let us assume that on some time-space domain $U\in \mathbb{R}^{d+1}$, we have simultaneously $p^\infty=0$ and $n^\infty=1$ (with for example $n_1^\infty=1$ and $n_2^\infty=0$).
 Then by equation (\ref{nn1}) we have $0=\partial_t n_1^\infty=n_1^\infty(G_1(0)-\nabla  \cdot v_1^\infty )$ in $U$, so that $\nabla \cdot v_1=G_1(0)=g_1p_1^\ast >0$ in $U$. On the other hand, applying the divergence operator to (\ref{vv1}), one gets $\nabla \cdot v_1=0,$ which is absurd. Therefore, $U$ is of measure zero, and finally,
\begin{eqnarray}
\Omega(t)=\{x | \;p^{\infty}( \cdot, t)>0\} = \{x |\;n^{\infty}( \cdot, t)=1\}, \; \text{a.e, a.e }t\ge0. 
\nonumber
%\label{inclusion}
\end{eqnarray}
Furthermore, the complementary relations \eqref{congpress}, \eqref{u1inf} and \eqref{u2inf} can be rewritten on each subdomain. First, Eq. (\ref{congpress}) does not give additional information outside $\Omega(t)$, but on $\Omega(t)$ it can be rewritten as,
 \begin{eqnarray}
        \nabla \cdot v_{1}^{\infty}= G_{1}(p_{1}^{\infty}) \quad \text { on } \Omega_1 (t), \qquad \text{and} \qquad
        \nabla \cdot v_{2}^{\infty}= G_{2}(p_{2}^{\infty}) \quad \text { on } \Omega_2 (t). \nonumber
        \end{eqnarray}

Then, from \eqref{u1inf}, the equation of $q^{\infty}$ on $\Omega_1(t)$ becomes,
\begin{eqnarray}
\partial_t q^{\infty}+ (q^{\infty}+1)\nabla \cdot (\log(q^{\infty}+1)v_2^{\infty})=(q^{\infty}+1)\log(q^{\infty}+1)G_2(p_2^{\infty}). \label{q1inf}
\end{eqnarray}
Similarly in $\Omega_2(t)$ we obtain from \eqref{u2inf},
\begin{eqnarray}
\partial_t q^{\infty}+ (q^{\infty}+1)\nabla \cdot (\log(q^{\infty}+1)v_1^{\infty})=(q^{\infty}+1)\log(q^{\infty}+1)G_1(p_1^{\infty}).\label{q2inf}
\end{eqnarray}
Finally, we compute $K_1^{\infty}$ and $K_2^{\infty}$ in $\Theta \backslash \Omega(t)$ where $0\leq n_1^{\infty}<1$ and $ 0 \leq n_2^{\infty}<1$.  Taking $p_1^{\infty}=0$ in \eqref{u1inf} in areas where $n_2^{\infty}=0$ and $0<n_1^{\infty}<1$ we derive the following equation,
\begin{eqnarray}
\partial_t K_1^{\infty}&+&\nonumber (q^{\infty}+1)\log(q^{\infty}+1)\nabla \cdot (n_1^{\infty}v_1^{\infty})+(q^{\infty}+1)\log(q^{\infty}+1)n_1^{\infty}\nabla \cdot v_2^{\infty} \\\nonumber
&+&n_1^{\infty} v_2^{\infty} \cdot   \nabla q^{\infty} - (q^{\infty}+1)\log(q^{\infty}+1)\nabla n_1^{\infty}\cdot v_2^{\infty} +q^{\infty}\nabla \cdot (n_1^{\infty}v_1^{\infty}) \nonumber\\
&=& (q^{\infty}+1)\log(q^{\infty}+1)n_1^{\infty}(G_1(0)+G_2(p_2^{\infty}))+q^{\infty}n_1^{\infty}G_1(0),
\nonumber
%\label{q1out}
\end{eqnarray}
And to compute $K_2^{\infty}$ in areas where $n_1^{\infty}=0$ and $0<n_2^{\infty}<1$, we take $p_2^{\infty}=0$ in \eqref{u2inf} and derive the following equation: 

\begin{eqnarray}
\partial_t K_2^{\infty}&+&\nonumber (q^{\infty}+1)\log(q^{\infty}+1)\nabla \cdot (n_2^{\infty}v_2^{\infty})+(q^{\infty}+1)\log(q^{\infty}+1)n_2^{\infty}\nabla \cdot v_1^{\infty} \\\nonumber
&+&n_2^{\infty} v_1^{\infty} \cdot   \nabla q^{\infty} - (q^{\infty}+1)\log(q^{\infty}+1)\nabla n_2^{\infty}\cdot v_1^{\infty} +q^{\infty}\nabla \cdot (n_2^{\infty}v_2^{\infty}) \nonumber\\
&=& (q^{\infty}+1)\log(q^{\infty}+1)n_2^{\infty}(G_1(p_1^{\infty})+G_2(0))+q^{\infty}n_2^{\infty}G_2(0).
\nonumber
%\label{q2out}
\end{eqnarray}

Finally, note that the equations \eqref{u1inf} and \eqref{u2inf} are very related to the choice of the repulsion pressure $q_m$. In particular, the presence of the terms $\log(q^\infty+1)$ comes from the power law in Eq. \eqref{qmnew}.

%%%%%%%%%%%%%%%%%%%%%%%%%%%%%%%%%%%%%%%%%%%%%%%%%%%%%%%%%%%
\subsection{Velocities of the interfaces}
% in the case $n_{1}^{\infty}=0$ in $\Omega_{1}^{c}(t)$ and $n_{2}^{\infty}=0$ in $\Omega_{2}^{c}(t)$.} %%%%%%%%%%%%%%%%%%%%%%%%%%%%%%%%%%%%%%%%%%%%%%%%%%%%%%%%%%%

Finally, we complete here the derivation of the free boundary problem entailed by the incompressible limit by computing the velocities of the interfaces. To this end, we consider the case where each tissue $i$ occupies fully its domain $\Omega_i$, that is, we assume that $n_{1}^{\infty}=\chi_{\Omega_1}(t)$ and $n_{2}^{\infty}=\chi_{\Omega_1}(t)$. One can verify that in this case $(n_1^\infty,n_2^\infty)$ solve the equations \eqref{nn1} and \eqref{nn2} on each subdomain $\Omega_i(t)$, $i=1,\,2$ and $\Omega^c(t)$.

 We then compute the velocity of $\partial \Omega(t)=(\partial \Omega_{1}(t)\cap \partial \Omega(t)) \cup (\partial \Omega_{2}(t)\cap \partial \Omega(t))$ and of the interface between the two tissues $\Gamma(t)=\partial \Omega_{1}(t)\cap \partial \Omega_{2}(t)$. 

 We test \eqref{nn1} with some $\phi \in \mathcal{C}^{\infty}_{c}(\mathbb{R}^d)$,
 \begin{eqnarray}
  \partial_{t} \int_{\R^{d}}n_{1}^{\infty}\phi
  = \int_{\R^{d}}\partial_{t}n_{1}^{\infty}\phi
  \nonumber
  =\int_{\R^{d}} n_{1}^{\infty}v_{1}^{\infty}\cdot\nabla \phi+ \int_{\R^{d}}n_{1}^{\infty}G_{1}(p_{1}^{\infty})\phi.
  \nonumber
 \end{eqnarray}
 Hence using Green's formula, and that $n_{1}^{\infty}=0$ on $\Omega_{1}^{c}(t),$ we can write,
  \begin{eqnarray}
        \partial_{t} \int_{\Omega_{1}(t)}\phi=%&=&
        -\int_{\Omega_{1}(t)} \nabla \cdot v_{1}^{\infty} \phi+\int_{\partial\Omega_{1}(t)}v_{1}^{\infty}\cdot\vec{\nu}\phi % \nonumber \\ &&
        + \int_{\Omega_{1}(t)}G_{1}(p_{1}^{\infty})\phi.
        \nonumber
 \end{eqnarray}
 with $\vec{\nu}$ the outward normal vector to $\Omega_{1}(t)$.
Using the complementary relation \eqref{congpress} on the subdomain $\Omega_{1}(t)$ we obtain
\begin{eqnarray}
        \partial_{t} \int_{\Omega_{1}(t)}\phi&=&
        \int_{\partial\Omega_{1}(t)\cap \partial\Omega(t)}v_{1}^{\infty}\cdot\vec{\nu} \phi +\int_{\Gamma(t)}v_{1}^{\infty}\cdot\vec{\nu}\phi.\label{veln}
 \end{eqnarray}
recalling that $\Gamma(t)=\partial\Omega_{1}(t)\cap\partial \Omega_{2}(t)$ is the interface between the two densities, and $\partial \Omega_{1}(t)\cap \partial \Omega(t)$ the exterior boundary of $\Omega_{1}(t)$.

We now introduce $V_{\partial \Omega_{1}(t)\cap \partial \Omega(t)}$ the velocity of the exterior boundary of $\Omega_{1}(t)$ along the outward normal vector $\vec{\nu}$, and $V_{\Gamma(t)}$ the velocity of the interface $\Gamma(t)$ along the vector $\vec{\nu}$.
Using Reynolds transport theorem on moving domains, the LHS gives
 \begin{eqnarray}
       \partial_{t}  \int_{\Omega_{1}(t)} \phi&=&\int_{\partial \Omega_{1}(t)}V_{\partial \Omega_{1}(t)}\cdot  \vec{\nu}\phi \nonumber \\
       &=& \int_{\partial\Omega_{1}(t)\cap \partial\Omega(t)}V_{\partial \Omega_{1}(t)}\cdot  \vec{\nu}\phi +\int_{\Gamma(t)}V_{\partial \Omega_{1}(t)}\cdot  \vec{\nu}\phi \label{leib1}.
 \end{eqnarray}
Since \eqref{veln} and \eqref{leib1} are satisfied for all $\phi \in \mathcal{C}^{\infty}_{c}(\R^{d})$, particularly by extension of $\phi \in \mathcal{C}^{\infty}_{c}(\partial \Omega_{1}(t)\cap \partial \Omega(t))$ we deduce the following relation,
\begin{equation}
    V_{\partial \Omega_{1}(t)\cap \partial \Omega(t)}=v_{1}^{\infty}\cdot\vec{\nu}, \nonumber
\end{equation}
 similarly by extension of $\phi \in \mathcal{C}^{\infty}_{c}(\Gamma(t))$ we deduce,
 \begin{equation}
    V_{\Gamma(t)}=v_{1}^{\infty}\cdot\vec{\nu}. \nonumber
\end{equation}
Finally, by a symmetric reasoning, defining $\vec{\mu}$ the outward normal vector to $\Omega_{2}(t)$, we obtain the velocity of the exterior boundary of $\Omega_{2}(t)$,
\begin{equation}
    V_{\partial \Omega_{2}(t)\cap \partial \Omega(t)}=v_{2}^{\infty}\cdot\vec{\mu}, \nonumber
\end{equation}
as well as the continuity of the velocity along the interface $\Gamma(t)$,
 \begin{equation}
   v_{1}^{\infty}\cdot\vec{\nu}= v_{2}^{\infty}\cdot\vec{\nu}\quad \text{on\;} \Gamma(t). \nonumber
\end{equation}

%%%%%%%%%%%%%%%%%%%%%%%%%%%%%%%%%%%%%%%%%%%%%%%%%%%%%%%%%%%
\subsection{On the limit of the repulsion pressure} \label{section2.3}
%%%%%%%%%%%%%%%%%%%%%%%%%%%%%%%%%%%%%%%%%%%%%%%%%%%%%%%%%%%

In this section we study the behavior of the repulsion pressure $q_m$ in the mechanical model at the incompressible limit. We notice that $q^\infty=0$ gives an admissible solution for the problem \eqref{nn1}--\eqref{u2inf}. Indeed, since the system at the incompressible limit is fully segregated \eqref{segr}, it might be relevant to consider that at the limit the repulsion pressure vanishes. The question we want to address is the following: is it relevant to consider cases where the repulsion pressure persists at the incompressible limit, that is, $q^\infty$ is not identically null ? Our numerical simulations suggest that $q^\infty$ does not vanish at the incompressible limit, and they highlight its role in creating adjacent zones of opposite curls in the anterior part of the PSM. To explore further the role of this repulsion force, we proceed with the following formal analysis.

Here we are interested in the case where densities at the incompressible limit are indicator functions (which is a case where the free boundary model is particularly relevant). We then suppose that $n_1^m$ and $n_2^m$ converge respectively towards $\chi_{\Omega_1}$ and $\chi_{\Omega_2}$ in a sufficiently strong sense, locally uniformly in time, and we write the Taylor expansion of $n_1^m$ and $n_2^m$ with respect to the parameter $m$ (assuming that such expansion exists),
\begin{eqnarray} \label{eq:DLn1}
n_1^m = \chi_{\Omega_1} + \frac1m h_1 + o (m^{-1}),\\ \label{eq:DLn2}
n_2^m = \chi_{\Omega_2} + \frac1m h_2 + o(m^{-1}).
\end{eqnarray}
We can then compute the product,
\begin{eqnarray}
n_1^m n_2^m = \frac1m \left( h_2 \chi_{\Omega_1} +  h_1 \chi_{\Omega_2} \right) + o (m^{-1}), \nonumber
\end{eqnarray}
so that the repulsion pressure becomes at the limit,
\begin{eqnarray}
q_m &=&\frac{m}{m-1}[(1+n_1^mn_2^m)^{m-1}-1]\nonumber \\
&=&\frac{m}{m-1}\left(\exp\left\{(m-1)\log\left(1+ \frac1m ( h_2 \chi_{\Omega_1} +  h_1 \chi_{\Omega_2} ) + o(m^{-1}) \right)\right\}-1\right)\nonumber\\
&\xrightarrow{m\rightarrow +\infty}& \exp\left\{ h_2 \chi_{\Omega_1} +  h_1 \chi_{\Omega_2} \right\}-1. \label{lim:DLqm}
\end{eqnarray}
Finally, rewriting this last expression, we have shown that,
\begin{eqnarray}
q^\infty = \chi_{\Omega_1} \left(\exp( h_2 )-1\right) + \chi_{\Omega_2} \left(\exp( h_1 )-1\right). \label{eq:DLqm}
\end{eqnarray}

\paragraph{Interpretation.} In the Taylor expansion \eqref{eq:DLn1} expressed in terms of small values of the parameter $1/m$, the term of order zero gives the asymptotic behavior of the density $n_1^m$. We see that the coefficient of first order in $1/m$ in \eqref{eq:DLn1} (i.e the quantity $h_1$) appears in the final expression of $q^\infty$ at the incompressible limit under the form $h_1 \chi_{\Omega_2}$. It is remarkable that the repulsion force persists \emph{a priori} everywhere on the domain, and not only at the interface. One could interpret this phenomenon as a repulsion force emerging from microscopic residuals of the tissue $1$ inside tissue $2$ (and \emph{vice versa}). The situation where the repulsion force vanishes at the limit then corresponds to a modelling situation where such microscopic effects are neglected (fast segregation regime). Note that $q^\infty=0$ gives a particular solution of \eqref{q1inf}--\eqref{q2inf} (see remark \ref{qzero}).

In fact, $q^\infty$ cannot be neglected in general at the limit as it produces an effect on the dynamics. We note that in \eqref{eq:DLqm} the repulsion only persists in the domain $\Omega_1 \cup \Omega_2$ (see Remark \ref{rem:q_out_Omega}), that is, the repulsion force produces a finite effect only in regions fully occupied by the densities.

Moreover, the persistence of the repulsion pressure $q^\infty$ at the limit can be seen as a \emph{ghost effect}. This terminology originates from the framework of rarefied gas dynamics. For instance, in rarefied gases, steady flows can be induced by temperature fields. When a continuum limit is performed on the Boltzmann equation on the basis of kinetic theory, in the sense that the Knudsen number of the system tends to zero, these flows vanish. This means that no condensation nor evaporation occurs, and the components are at rest. However, it was shown that these vanishing flows produce a finite effect on the gas behavior at the limit. This is known as the \emph{ghost effect}, originally discovered by Sone et al \citep{Sone1, Sone2, Sone4, Sone3, Sone5}. The \emph{ghost effect} has been described in different physical settings (\citep{Sone6, Sone7, Sone8, Sone1}. These works show that the Navier-Stokes equation (or the heat-conduction equation) must be coupled with the \emph{ghost effect} to fully describe the gas dynamics at the continuum limit \citep{Sone4}. This was shown by doing an asymptotic analysis on the Boltzmann equation, by expansion in the Knudsen number, which allowed the derivation of the fluid-dynamics equations at the limit and the \emph{ghost effect}. In our case, if we assume that the Taylor expansion in $1/m$ holds, the \emph{ghost effect} appears in terms of first order in $1/m$ similarly to \citep{Sone6, Sone4, Sone5}.

%%%%%%%%%%%%%%%%%%%%%%%%%%%%%%%%%%%%%%%%%%%%%%%%%
%%%%%%%%%%
%%%%%%%%%%%%%%%%%%%%%%%%%%%%%%%%%%%%%%%%%%%%%%%%%%%%%%%%%%%
%%%%%%%%%%%%%%%%%%%%%%%%%%%%%%%%%%%%%%%%%%%%%%%%%%%%%%%%%%%
\section{Analysis of the stationary L-ESVM ($q$ given)}\label{sec:tran_mod1}
%%%%%%%%%%%%%%%%%%%%%%%%%%%%%%%%%%%%%%%%%%%%%%%%%%%%%%%%%%%
%%%%%%%%%%%%%%%%%%%%%%%%%%%%%%%%%%%%%%%%%%%%%%%%%%%%%%%%%%%
%%%%%%%%%%%%%%%%%%%%%%%%%%%%%%%%%%%%%%%%%%%%%%%%%%%%%%%%%%%

In this section, we study the stationary L-ESVM in the framework described in section \ref{sec:studystatLESVM}. We recall that the system is considered in a bounded domain $\Theta \subset \mathbb{R}^{2}$ ($d=2$) with homogeneous Dirichlet boundary conditions on the velocity. We consider three subdomains $\Omega,\,\Omega_1$ and $\Omega_2$ which
%$\Omega  \coloneqq  \{ x \in \Theta | p(x)>0\}\subset \Theta$ and the two subdomains $\Omega_{1}=\{x |\; n_{1}=1\}\cap \Omega $ and $\Omega_{2}=\{x |\; n_{2}=1\} \cap \Omega$. 
satisfy the assumptions \eqref{cc1}--\eqref{cc3}.
In the simplified framework considered here, the equations \eqref{u1inf} and \eqref{u2inf} are removed and instead, $q$ is given as in \eqref{qL2}. We also choose the growth functions $G_1$ and $G_2$ to be linear as in \eqref{lineargrowth}. For clarity, we remove the superscripts $"\infty"$ from all the variables in this section.

The resulting problem is the following system on $(v_1,v_2)$,
$$
(S_2) \;\; 
\left\{
\begin{array}{ll}
-\beta_{1}\Delta v_{1}+v_{1}=- \nabla [(p_1^{\ast}-\frac{1}{g_1}\nabla \cdot v_1)\chi_{\Omega_1}+(p_2^{\ast}-\frac{1}{g_2}\nabla \cdot v_2+q)\chi_{\Omega_2}]& \text{\; on \;} \Theta,  \\
-\beta_{2}\Delta v_{2}+v_{2}=- \nabla [(p_2^{\ast}-\frac{1}{g_2}\nabla \cdot v_2)\chi_{\Omega_2}+(p_1^{\ast}-\frac{1}{g_1}\nabla \cdot v_1+q)\chi_{\Omega_1}]& \text{\; on \;} \Theta, \\
    v_1=v_2=0& \text{\; on \;} \partial \Theta, 
    \end{array}
     \right.
$$
with $\chi_{\Omega_1},\chi_{\Omega_2}$ the indicator functions of the domains $\Omega_1,\Omega_2$ respectively, and where $g_1,\,g_2,\,p_1^\ast,\, p_2^\ast>0$ and $q$ is a given function on $\Omega$.

%%%%%%%%%%%%%%%%%%%%%%%%%%%%%%%%%%%%%%%%%%%%%%%%%%%%%%%%%%%
\subsection{Well-posedness and regularity of the stationary linear system.}

We prove here Theorem \ref{wellpreg}. We first start with the well-posedness.

\begin{proposition}\label{prop:LaxMilgram2} For $q\in L^2(\Omega)$, there exists a unique solution $(v_1,v_2) \in H^{1}_0(\Theta)^{2}\times H^{1}_0(\Theta)^2$ to problem $(S_2)$ under the condition \eqref{cdd1} on the model parameters.
\end{proposition}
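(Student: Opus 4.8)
The plan is to recast $(S_2)$ in weak form and apply the Lax--Milgram theorem on the Hilbert space $H\coloneqq H^1_0(\Theta)^2\times H^1_0(\Theta)^2$ with norm $\|(v_1,v_2)\|_H^2=\|v_1\|_{H^1}^2+\|v_2\|_{H^1}^2$. First I would test the first equation of $(S_2)$ against $w_1\in H^1_0(\Theta)^2$ and the second against $w_2\in H^1_0(\Theta)^2$, integrate by parts (the boundary terms vanishing because $w_1,w_2\in H^1_0$), and move the unknown divergence contributions to the left-hand side. Writing $\|\cdot\|$ for the $L^2(\Theta)$ norm, this produces the bilinear form
\begin{align*}
a((v_1,v_2),(w_1,w_2)) &= \beta_1\!\int_\Theta \nabla v_1:\nabla w_1 + \beta_2\!\int_\Theta \nabla v_2:\nabla w_2 + \int_\Theta v_1\cdot w_1 + \int_\Theta v_2\cdot w_2\\
&\quad + \tfrac{1}{g_1}\!\int_{\Omega_1}(\nabla\cdot v_1)(\nabla\cdot w_1) + \tfrac{1}{g_2}\!\int_{\Omega_2}(\nabla\cdot v_2)(\nabla\cdot w_1)\\
&\quad + \tfrac{1}{g_2}\!\int_{\Omega_2}(\nabla\cdot v_2)(\nabla\cdot w_2) + \tfrac{1}{g_1}\!\int_{\Omega_1}(\nabla\cdot v_1)(\nabla\cdot w_2),
\end{align*}
together with the linear functional $L(w_1,w_2)=\int_{\Omega_1}p_1^\ast\nabla\cdot w_1+\int_{\Omega_2}(p_2^\ast+q)\nabla\cdot w_1+\int_{\Omega_2}p_2^\ast\nabla\cdot w_2+\int_{\Omega_1}(p_1^\ast+q)\nabla\cdot w_2$, which is continuous on $H$ since $q\in L^2(\Omega)$ and $\nabla\cdot w_i\in L^2(\Theta)$. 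Continuity of $a$ is immediate from Cauchy--Schwarz.

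The heart of the proof, and the step I expect to be the main obstacle, is coercivity, because the terms coupling $\nabla\cdot v_1$ and $\nabla\cdot v_2$ are sign-indefinite. Evaluating $a$ on the diagonal gives
\begin{align*}
a((v_1,v_2),(v_1,v_2)) &= \beta_1\|\nabla v_1\|^2 + \beta_2\|\nabla v_2\|^2 + \|v_1\|^2 + \|v_2\|^2\\
&\quad + \tfrac{1}{g_1}\!\int_{\Omega_1}\!\big[(\nabla\cdot v_1)^2 + (\nabla\cdot v_1)(\nabla\cdot v_2)\big] + \tfrac{1}{g_2}\!\int_{\Omega_2}\!\big[(\nabla\cdot v_2)^2 + (\nabla\cdot v_1)(\nabla\cdot v_2)\big].
\end{align*}
I would complete the square on each subdomain: on $\Omega_1$, $(\nabla\cdot v_1)^2+(\nabla\cdot v_1)(\nabla\cdot v_2)\ge-\tfrac14(\nabla\cdot v_2)^2$, and symmetrically on $\Omega_2$, $(\nabla\cdot v_2)^2+(\nabla\cdot v_1)(\nabla\cdot v_2)\ge-\tfrac14(\nabla\cdot v_1)^2$. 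Bounding the resulting negative terms by integrals over all of $\Theta$ yields
\begin{equation*}
a((v_1,v_2),(v_1,v_2)) \ge \beta_1\|\nabla v_1\|^2 + \beta_2\|\nabla v_2\|^2 + \|v_1\|^2 + \|v_2\|^2 - \tfrac{1}{4g_2}\|\nabla\cdot v_1\|^2 - \tfrac{1}{4g_1}\|\nabla\cdot v_2\|^2.
\end{equation*}

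The decisive ingredient is then the sharp inequality $\|\nabla\cdot v\|^2\le\|\nabla v\|^2$ valid for every $v\in H^1_0(\Theta)^2$, which I would establish by integrating by parts twice to rewrite $\int_\Theta(\nabla\cdot v)^2=\int_\Theta\sum_{i,j}\partial_i v_j\,\partial_j v_i$ and then observing that this contraction of $\nabla v$ with its transpose equals $\|S\|^2-\|W\|^2$, where $S$ and $W$ are the symmetric and antisymmetric parts of $\nabla v$, hence is dominated by $\|\nabla v\|^2$. Applying this to $v_1$ and $v_2$ gives
\begin{equation*}
a((v_1,v_2),(v_1,v_2)) \ge \Big(\beta_1-\tfrac{1}{4g_2}\Big)\|\nabla v_1\|^2 + \Big(\beta_2-\tfrac{1}{4g_1}\Big)\|\nabla v_2\|^2 + \|v_1\|^2 + \|v_2\|^2,
\end{equation*}
and condition \eqref{cdd1}, namely $\beta_1 g_2>\tfrac14$ and $\beta_2 g_1>\tfrac14$, makes both bracketed coefficients strictly positive, so that $a$ is coercive with constant $c=\min(\beta_1-\tfrac{1}{4g_2},\,\beta_2-\tfrac{1}{4g_1},\,1)>0$. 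Lax--Milgram then yields the unique $(v_1,v_2)\in H$ solving the weak formulation, and continuity across the interfaces $\Gamma_1,\Gamma_2,\Gamma$ follows from $v_1,v_2\in H^1(\Theta)$ admitting no jump. I expect the only delicate points to be the careful bookkeeping of the indicator functions during the integrations by parts and the justification of the sharp divergence inequality, which is precisely where the constant $\tfrac14$ in \eqref{cdd1} comes from.
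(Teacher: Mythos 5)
Your proposal is correct and follows essentially the same route as the paper: a Lax--Milgram argument with the identical bilinear form, coercivity obtained by completing the square (equivalently, Young's inequality) on the cross terms $\int_{\Omega_i}(\nabla\cdot v_1)(\nabla\cdot v_2)$, and the inequality $\|\nabla\cdot v\|_{L^2}^2\le\|\nabla v\|_{L^2}^2$ turning condition \eqref{cdd1} into strict positivity of the coefficients. The only differences are cosmetic: you supply a proof of the divergence--gradient inequality (which the paper merely states as \eqref{divandgrad}), and your right-hand-side functional is the one read off directly from $(S_2)$, whereas the paper's $l$ carries two extra (harmless for continuity) $q$-terms on the diagonal.
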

\begin{proof} The proof is done using the Lax-Milgram theorem. We can write the weak formulation of $(S_2)$ as
\begin{equation}
 \;\; 
 (\tilde{V})
\left\{
\begin{array}{ll}
\text{Find $(v_1,v_2) \in H^{1}_0(\Theta)^{2}\times H^1_0(\Theta)^2$, such that }\\
       B((v_1,v_2),(\phi_1,\phi_2))=l((\phi_1,\phi_2)), \quad \forall (\phi_1,\phi_2) \in H^{1}_{0}(\Theta)^{2}\times H^{1}_{0}(\Theta)^{2},\nonumber
    \end{array}
     \right.
\end{equation}
where we define the bilinear map $B$ as
\begin{eqnarray}
B((v_1,v_2),(\phi_1,\phi_2))&=&\beta_1 \int_{\Theta}\nabla v_1 : \nabla \phi_1+\beta_2 \int_{\Theta}\nabla v_2 : \nabla \phi_2 +\int_{\Theta} v_1\cdot \phi_1+\int_{\Theta} v_2\cdot \phi_2 \nonumber \\
&&+\frac{1}{g_1} \int_{\Theta} (\nabla \cdot v_1 )(\nabla \cdot \phi_1 ) \chi_{\Omega_1} +\frac{1}{g_2} \int_{\Theta} (\nabla \cdot v_2 )(\nabla \cdot \phi_2 ) \chi_{\Omega_2}\nonumber \\
&&+\frac{1}{g_1} \int_{\Theta} (\nabla \cdot v_1 )(\nabla \cdot \phi_2 ) \chi_{\Omega_1}+\frac{1}{g_2} \int_{\Theta} (\nabla \cdot v_2 )(\nabla \cdot \phi_1 ) \chi_{\Omega_2}, \nonumber
\end{eqnarray}
and the linear application $l$ as
\begin{eqnarray}
l((\phi_1,\phi_2))&=& \int_{\Omega_1}(p_1^{\ast}+q)(\nabla \cdot \phi_1+\nabla \cdot \phi_2)  + \int_{\Omega_2}(p_2^{\ast}+q)(\nabla \cdot \phi_2+\nabla \cdot \phi_1). \nonumber
\end{eqnarray}
To prove the continuity of the bilinear application we do the following: 
\begin{eqnarray}
    |B((v_1,v_2),(\phi_1,\phi_2))| & \leq& \beta_1 \|\nabla v_1\|_{L^{2}(\Theta)} \|\nabla \phi_1\|_{L^{2}(\Theta)} + \|v_1\|_{L^{2}(\Theta)}\|\phi_1\|_{L^{2}(\Theta)}\nonumber \\
    &&+ \frac{1}{g_1} \|\nabla \cdot v_1\|_{L^{2}(\Theta)} \|\nabla \cdot \phi_1\|_{L^{2}(\Theta)}+\beta_2 \|\nabla v_2\|_{L^{2}(\Theta)} \|\nabla \phi_2\|_{L^{2}(\Theta)} \nonumber \\
    &&+ \|v_2\|_{L^{2}(\Theta)}\|\phi_2\|_{L^{2}(\Theta)}+ \frac{1}{g_2} \|\nabla \cdot v_2\|_{L^{2}(\Theta)} \|\nabla \cdot \phi_2\|_{L^{2}(\Theta)}\nonumber \\
    &&+\frac{1}{g_1} \|\nabla \cdot v_1\|_{L^{2}(\Theta)} \|\nabla \cdot \phi_2\|_{L^{2}(\Theta)}+\frac{1}{g_2} \|\nabla \cdot v_2\|_{L^{2}(\Theta)} \|\nabla \cdot \phi_1\|_{L^{2}(\Theta)} \nonumber \\
     & \leq& (1+\min (\beta_1 +\frac{2}{g_1},\beta_2+\frac{2}{g_2}))\|(v_1,v_2)\|_{H^{1}_{0}(\Theta)^{2}\times H^{1}_{0}(\Theta)^{2}} \|(\phi_1,\phi_2)\|_{H^{1}_{0}(\Theta)^{2}\times H^{1}_{0}(\Theta)^{2}},\nonumber
\end{eqnarray}
where we used Cauchy-Schwarz inequality and that
\begin{equation}\|\nabla \cdot v \|^{2}_{L^{2}(\Theta)} \leq \|\nabla v \|^{2}_{L^{2}(\Theta)}.\label{divandgrad}
\end{equation}
For the coercivity of the bilinear application we use again Cauchy-Schwarz's inequality and the inequality \eqref{divandgrad},
\begin{eqnarray}
    B((v_1,v_2),(v_1,v_2)) &=& \beta_1 \|\nabla v_1\|^2_{L^{2}(\Theta)} + \|v_1\|^2_{L^{2}(\Theta)}+ \frac{1}{g_1} \|\nabla \cdot v_1\|^2_{L^{2}(\Omega_1)}
    \nonumber \\
    && +\beta_2 \|\nabla v_2\|^2_{L^{2}(\Theta)} + \|v_2\|^2_{L^{2}(\Theta)}+ \frac{1}{g_2} \|\nabla \cdot v_2\|^2_{L^{2}(\Omega_2)}
    \nonumber \\
    & &+ \frac{1}{g_1} \int_{\Omega_1}\nabla \cdot v_1 \nabla \cdot v_2
    +\frac{1}{g_2} \int_{\Omega_2}\nabla \cdot v_2\nabla \cdot v_1   
    \nonumber \\
    &\geq& \beta_1 \|\nabla v_1\|^2_{L^{2}(\Theta)}
    + \|v_1\|^2_{L^{2}(\Theta)}
    +\beta_2 \|\nabla v_2\|^2_{L^{2}(\Theta)} + \|v_2\|^2_{L^{2}(\Theta)}
    \nonumber \\
    & &- \frac{1}{4 g_1} \|\nabla \cdot v_2\|^2_{L^{2}(\Omega_1)}
    - \frac{1}{4 g_2} \|\nabla \cdot v_2\|^2_{L^{2}(\Omega_1)} \nonumber \\
    &\geq& \left( \beta_1 - \frac{1}{4 g_2} \right) \|\nabla v_1\|^2_{L^{2}(\Theta)}
    + \|v_1\|^2_{L^{2}(\Theta)}
    +\left( \beta_2 - \frac{1}{4 g_1} \right) \|\nabla v_2\|^2_{L^{2}(\Theta)} + \|v_2\|^2_{L^{2}(\Theta)}, \nonumber
\end{eqnarray}
which gives the coercivity of $B$ under the condition (\ref{cdd1}).

Finally we obtain the continuity of the linear application $l$ by :
\begin{eqnarray}
   |l((\phi_1,\phi_2))| & \leq & C (p_1^{\ast}+p_2^{\ast}+\|q\|_{L^{2}(\Omega)}) \|(\phi_1,\phi_2)\|_{H^{1}_{0}(\Theta)^{2}\times H^{1}_{0}(\Theta)^{2}}. \nonumber
\end{eqnarray}
Then by the Lax-Milgram theorem there exists a unique solution $(v_1,v_2) \in H^{1}_{0}(\Theta)^{2}\times H^{1}_{0}(\Theta)^{2}$ to $(S_2).$
\end{proof}

Whenever the repulsion pressure $q$ is assumed to be smooth on each subdomain, we can obtain more regularity on the velocities by applying the results of elliptic regularity for transmission problems from \cite{li}:
\begin{proposition}[Elliptic regularity]\label{prop:ellip} For a given $q\in \mathcal{C}^{0,\mu}(\overline{\Omega_1})\cap \mathcal{C}^{0,\mu}(\overline{\Omega_2})$ for some $0<\mu<1$, and if the domains satisfy the conditions (\ref{cc1})-(\ref{cc3}) and the condition on the model parameters \eqref{cdd1}, then the solution $(v_1,v_2)$ to the system $(S_2)$ is $\mathcal{C}^{1,\alpha'}(\overline{\Omega_1})\cap\mathcal{C}^{1,\alpha'}( \overline{\Omega_2}) \cap \mathcal{C}^{1,\alpha'}( \overline{\Omega^c})$, with $0<\alpha'\leq\min(\mu,\frac{1}{4})$.
\end{proposition}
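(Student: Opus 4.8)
The plan is to recast system $(S_2)$ as a single divergence-form elliptic system for the combined unknown $U=(v_1,v_2)$ (a field valued in $\mathbb{R}^4$ on $\Theta$), whose principal coefficients are \emph{piecewise constant}, jumping across the smooth interfaces $\Gamma$, $\Gamma_1$, $\Gamma_2$, and whose source is the distributional divergence of a field that is H\"older continuous on each subdomain. This is precisely the setting of elliptic regularity for transmission (composite-material) problems, for which I would invoke the estimates of \cite{li}. Concretely, the bilinear form $B$ built in the proof of Proposition \ref{prop:LaxMilgram2} is of the form $B(U,\Phi)=\int_\Theta A^{\alpha\beta}(x)\,\partial_\beta U\cdot\partial_\alpha\Phi+\int_\Theta U\cdot\Phi$, with $A$ constant on each of $\Omega_1$, $\Omega_2$, $\Omega^c$ (its entries built from $\beta_1,\beta_2,1/g_1,1/g_2$ and the indicator functions), while the linear form $l$ rewrites as $\int_\Theta F:\nabla\Phi$ with $F=(p_1^\ast+q)\,\mathrm{Id}\,\chi_{\Omega_1}+(p_2^\ast+q)\,\mathrm{Id}\,\chi_{\Omega_2}$. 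Since $q$ is H\"older on each subdomain and $p_1^\ast,p_2^\ast$ are constants, $F\in\mathcal{C}^{0,\mu}(\overline{\Omega_1})\cap\mathcal{C}^{0,\mu}(\overline{\Omega_2})\cap\mathcal{C}^{0,\mu}(\overline{\Omega^c})$; in particular the apparently singular datum $-\nabla q$ is admissible as a ``divergence of a H\"older field'', which is exactly the data class producing $\mathcal{C}^{1,\alpha}$ regularity.

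Next I would verify the structural hypotheses required by \cite{li}. First, the interfaces $\Gamma,\Gamma_1,\Gamma_2$ are $\mathcal{C}^\infty$ by \eqref{cc3} and the subdomains partition $\Theta$; second, the coefficients $A$ are piecewise constant, hence trivially piecewise H\"older; third, the principal part is strongly elliptic, and this is where condition \eqref{cdd1} enters. The coercivity computation already carried out in the proof of Proposition \ref{prop:LaxMilgram2}, which uses Cauchy--Schwarz and Young's inequalities together with \eqref{divandgrad} to absorb the off-diagonal divergence couplings, shows that $B$ satisfies a G\r{a}rding inequality with strictly positive constant $\min\!\big(\beta_1-\tfrac{1}{4g_2},\,\beta_2-\tfrac{1}{4g_1}\big)>0$; frozen on each subdomain this furnishes the Legendre--Hadamard ellipticity of the principal symbol of the coupled system. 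Granting these, the transmission regularity theorem of \cite{li} upgrades the weak solution $(v_1,v_2)\in H^1_0(\Theta)^2\times H^1_0(\Theta)^2$ of Proposition \ref{prop:LaxMilgram2} to $\mathcal{C}^{1,\alpha'}$ up to the boundary of each of $\Omega_1,\Omega_2,\Omega^c$. The H\"older exponent $\mu$ is inherited from the datum $F$ (equivalently from $q$), while the cap $\tfrac14$ is the intrinsic exponent permitted by the transmission across the interfaces, governed by the ellipticity contrast of the piecewise-constant coefficients under \eqref{cdd1}; one may thus take any $\alpha'\le\min(\mu,\tfrac14)$. Globally across the interfaces $U$ remains only continuous, consistent with a gradient jump.

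The hard part will be matching the precise algebraic hypotheses of \cite{li} to our \emph{coupled, non-symmetric} vector system: unlike the scalar composite-material problem, the $v_1$ and $v_2$ equations are linked through the cross terms $\tfrac{1}{g_j}\nabla\nabla\cdot v_j$, so the principal symbol is a $4\times4$ matrix and one must check proper ellipticity (and, at the interfaces, the complementing/transmission condition) rather than mere scalar uniform ellipticity. The key observation is that condition \eqref{cdd1} is exactly what keeps this symbol invertible once the divergence couplings are absorbed, so that the same constant $\tfrac14$ guaranteeing coercivity in Proposition \ref{prop:LaxMilgram2} reappears as the admissible regularity exponent. I would therefore first reduce, on each subdomain separately, to a constant-coefficient strongly elliptic system, confirm its ellipticity via the G\r{a}rding constant above, and only then appeal to the interface estimate of \cite{li} to conclude.
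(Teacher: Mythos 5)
Your core strategy is the same as the paper's: rewrite $(S_2)$ as a divergence-form system whose principal coefficients are piecewise constant across the smooth interfaces and whose datum is the divergence of a field that is $\mathcal{C}^{0,\mu}$ on each subdomain, then invoke the transmission regularity theory of \cite{li}, verifying its ellipticity hypothesis through the coercivity estimate of Proposition \ref{prop:LaxMilgram2}, with the constant $\lambda=\min\left(\beta_1-\frac{1}{4g_2},\,\beta_2-\frac{1}{4g_1}\right)>0$ supplied by \eqref{cdd1}. Moreover, the ``hard part'' you anticipate in your last paragraph is not actually there: Theorem 1.1 of \cite{li} is formulated for systems and requires only the weak ellipticity condition $B(\phi,\phi)\geq\lambda\|\nabla\phi\|^2_{L^2(\Theta)}$ for all $\phi\in H^1_0(\Theta)^4$; no proper-ellipticity or complementing/transmission conditions need to be verified at the interfaces. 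Once the G\aa rding-type inequality is recorded, the hypotheses of \cite{li} are met, and this is exactly how the paper proceeds.

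Two points need fixing. First, a genuine gap: Theorem 1.1 of \cite{li} is an \emph{interior} estimate, valid away from the boundary of the ambient domain. By \eqref{cc2} the sets $\overline{\Omega_1}$ and $\overline{\Omega_2}$ stay away from $\partial\Theta$, so the interior statement does give $\mathcal{C}^{1,\alpha'}(\overline{\Omega_1})\cap\mathcal{C}^{1,\alpha'}(\overline{\Omega_2})$; but $\overline{\Omega^c}$ touches $\partial\Theta$, and your appeal to \cite{li} does not, by itself, yield regularity up to $\partial\Theta$, which the claimed membership in $\mathcal{C}^{1,\alpha'}(\overline{\Omega^c})$ requires. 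The paper closes this using the homogeneous Dirichlet condition: extend $(v_1,v_2)$ by zero to a slightly enlarged domain, so that $\partial\Theta$ becomes an interior interface and the interior estimate applies there as well; you should add this step (or invoke a boundary version of the estimate). Second, your reading of the cap $\frac{1}{4}$ is incorrect: it is not ``governed by the ellipticity contrast'' under \eqref{cdd1}, nor is it the same $\frac14$ that appears in the coercivity computation (that one comes from Young's inequality when absorbing the cross divergence terms). The exponent $\min(\mu,\frac14)$ is simply the one provided by the statement of Theorem 1.1 and Remark 1.2 of \cite{li}; in particular it does not improve when $\beta_1 g_2$ and $\beta_2 g_1$ are taken large. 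This second point does not invalidate the argument, since the exponent is inherited from \cite{li} in any case, but the justification as written is misleading.
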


\begin{proof} This is a direct application of Theorem 1.1 and Remark 1.2 in \cite{li}. To apply this theorem, we first check that the domains satisfy the appropriate requirements. In fact, we assume \eqref{cc1b} and \eqref{cc3}.
Then, it remains to check that the second order differential operator of $(S_2)$ satisfies the weak ellipticity condition, that is
\begin{eqnarray}
 B((\phi^1,\phi^2),(\phi^1,\phi^2))\geq \lambda \|\nabla \phi\|^2_{L^2(\Theta)},\qquad  \forall \phi=(\phi^1,\phi^2) \in H^1_0(\Theta)^4, \nonumber
\end{eqnarray}
for some $\lambda>0$.
This condition coincides with the coercivity condition of the bilinear application $B$. It is satisfied with $\lambda=\min\left(\beta_1-\frac{1}{4 g_2},\beta_2-\frac{1}{4 g_1}\right)$, which is positive thanks to \eqref{cdd1}.

Finally, note that Theorem 1.1 in \cite{li} gives the regularity result away from $\partial\Omega$. Thanks to the assumption in \eqref{cc2}, this already gives the regularity on $\overline{\Omega_i}$ for $i=1,\, 2$. Thanks to the homogeneous Dirichlet boundary conditions, to obtain the regularity up to $\partial \Theta$, it suffices to slightly move away the boundary $\partial \Theta$, extending the velocities by zero on the new domain.
\end{proof}

%\begin{remark} In the case of two-species, more than two subdomains may be in contact (then the boundaries are not smooth). In the latter case it suffices to move them slightly to obtain the desired regularity as the required estimates to achieve regularity do not depend on the distance between the subdomains, which may even vanish (see Remark 1.2 in \cite{li}), and we recover the regularity theorem stated above.\end{remark}

\begin{proof}[Proof of Theorem \ref{wellpreg}] Finally, Theorem \ref{wellpreg} is a consequence of Propositions \ref{prop:LaxMilgram2} and \ref{prop:ellip}.
\end{proof}

%%%%%%%%%%%%%%%%%%%%%%%%%%%%%%%%%%%%%%%%%%%%%%%%%%%%%%%%%%%
\subsection{Transmission problem of the stationary linear system}
%%%%%%%%%%%%%%%%%%%%%%%%%%%%%%%%%%%%%%%%%%%%%%%%%%%%%%%%%%%
We show here that System $(S_2)$ can be rewritten as the transmission problem $(T_2)$.

\begin{proof}[Proof of Proposition \ref{prop:TP}] Recall that the conditions (\ref{cc1})-(\ref{cc3}) and (\ref{cdd1}) are still assumed and that $q\in \mathcal{C}^{0,\mu}(\overline{\Omega_1})\cap \mathcal{C}^{0,\mu}(\overline{\Omega_2})$, for some $\mu\in(0,1)$.

First, by testing System $(S_2)$ with a test function compactly supported in $\Omega_1$, it is clear that the system $(T_{2,\Omega_1})$ is satisfied in a distributional sense (actually, against test functions in $H^1_0(\Omega_1)$ thanks to the regularity obtained from Theorem \ref{wellpreg}, and even in a classical sense thanks to interior elliptic regularity). The same applies in the other subdomains, so that $(T_{2,\Omega_1})$, $(T_{2,\Omega_2})$ and $(T_{2,\Omega^c})$ are satisfied on their respective domains.

To obtain the transmission conditions, we consider a test function $\phi \in \mathcal{C}^\infty_c(\Theta)^2$, and thanks to $\chi_{\Omega_1^n},\, \chi_{\Omega_2^n},$ and $ \chi_{\Omega_c^n}$ smooth approximations of respectively $\chi_{\Omega_1},\,\chi_{\Omega_2},$ and $\chi_{\Omega^c}$, we construct a new test function $\tilde{\phi}_{n} \in \mathcal{C}^\infty_c(\Theta)^2$ as
$\tilde{\phi_n}=\phi[1-\chi_{\Omega_1^n}-\chi_{\Omega_2^n}-\chi_{\Omega_c^n}]$.
Then we multiply (dot product) each one of the equations of $(S_2)$ with the test function $\tilde{\phi}_{n}$ and integrate, and we pass to the limit when $n$ tends to infinity. By identifying the quantities thus obtained on each section of the boundaries, one gets the transmission conditions $(T_{2,\Gamma_1})$, $(T_{2,\Gamma_2})$ and $(T_{2,\Gamma})$ (see Section \ref{sec:appendix} for details on the derivation of the transmission problem in the simple case of one species which can be similarly extended to the case of two species).
\end{proof}

%%%%%%%%%%%%%%%%%%%%%%%%%%%%%%%%%%%%%%
\subsection{Further regularity for the stationary linear system}
%linear system for the stationary L-ESVM
%%%%%%%%%%%%%%%%%%%%%%%%%%%%%%%%%%%%%%

In this section we show the proof of Theorem \ref{reg+}. We recall that the conditions (\ref{cc1})-(\ref{cc3}) and (\ref{cdd1})-(\ref{cc4}) are assumed and that $q\in \mathcal{C}^{\infty}(\overline{\Omega_1})\cap \mathcal{C}^{\infty}(\overline{\Omega_2})$.
\begin{proof}[Proof of Theorem \ref{reg+}] The regularity result is a direct consequence of Proposition 1.4 in \cite{li}. In fact, the well-posedness of the stationary L-ESVM for a given $q\in \mathcal{C}^{\infty}(\overline{\Omega_1})\cap \mathcal{C}^{\infty}(\overline{\Omega_2})$ is ensured by Theorem \ref{wellpreg}. Moreover, under (\ref{cc4}), we do not allow more than two subdomains to be in contact, thus all the boundary domains $\partial\Omega_1, \partial\Omega_2,$ and $\partial\Omega$ are smooth, and we gain the desired regularity.
\end{proof}

\section{Analysis of the L-VM}\label{sec:tran_pres}
 In this section we first show a formal proof of the segregation property of the VM.  We are then interested in the quantitative behavior of the stationary L-VM. We show the existence of a pressure jump using the formalism of a transmission problem.
\subsection{Proof of the segregation property}
This section is dedicated to the formal proof of Proposition \ref{segprop}. It is based on the study of the evolution of the population fraction.

\begin{proof}[Proof of Proposition \ref{segprop}]
We introduce the population fraction, on the set where $n>0$,
\begin{equation*}
c\coloneqq  \frac{n_{1}}{n}.
\end{equation*}
We note that the full segregation \eqref{fullsegeps} can be written as,
\begin{equation*}
c(1-c)=0.
\end{equation*}

We use equations \eqref{n1v} and \eqref{n2v} to derive an equation for $c$,
\begin{eqnarray*}
\partial_t c+\frac{(1-c)}{n}\nabla n_{1}\cdot v_{1}-\frac{c}{n}\nabla n_{2}\cdot v_{2}+c(1-c)(\nabla \cdot v_{1}-\nabla \cdot v_{2})\\
=c(1-c)(G_1(p_\epsilon)-G_2(p_\epsilon)).
\end{eqnarray*}
Then we can see that if $c^{\scriptsize{ini}}(1-c^{\scriptsize{ini}})=0$, then we get
\begin{equation*}
c(1-c)=0, \qquad \forall (t,x) \in [0;+\infty)\times \mathbb{R}^d .\label{sisi} 
\end{equation*}
\end{proof}

\subsection{Pressure jump of the stationary L-VM}
The corresponding stationary transmission problem for the L-VM is a special case of the transmission problem for the L-ESVM in the case where $q=0$. From proposition \ref{prop:TP}, we obtain the following transmission problem by taking $q=0$ in $(T_2)$,
\begin{equation*}
(T_{VM})
\left\{
\begin{array}{ll}
-\beta_1 \Delta v_1+v_1=0 & \text{\; in \;} \Omega^c,\\
-\beta_1 \Delta v_1+v_1-\frac{1}{g_1}\nabla \nabla \cdot v_1=0 & \text{\; in \;} \Omega_1 ,\\
-\beta_1 \Delta v_1+v_1-\frac{1}{g_2}\nabla \nabla \cdot v_2=0 & \text{\; in \;} \Omega_2 ,\\
-\beta_2 \Delta v_2+v_2=0 & \text{\; in \;} \Omega^c,\\
-\beta_2 \Delta v_2+v_2-\frac{1}{g_2}\nabla \nabla \cdot v_2=0 & \text{\; in \;} \Omega_2 ,\\
-\beta_2 \Delta v_2+v_2-\frac{1}{g_1}\nabla \nabla \cdot v_1=0 & \text{\; in \;} \Omega_1,
\\
\beta_1 [(\nabla v_1)_{\Omega_1}-(\nabla v_1)_{\Omega^c}]\cdot \vec{\nu}=[p_1^{\ast}-\frac{1}{g_1}(\nabla \cdot v_1)_{\Omega_1})] \vec{\nu}& \text{\; on \;} \Gamma_1,
\\
\beta_2 [(\nabla v_2)_{\Omega_1}-(\nabla v_2)_{\Omega^c}]\cdot \vec{\nu}=[p_1^{\ast}-\frac{1}{g_1}(\nabla \cdot v_1)_{\Omega_1})] \vec{\nu}& \text{\; on \;} \Gamma_1,
\\
\beta_1 [(\nabla v_1)_{\Omega_2}-(\nabla v_1)_{\Omega^c}]\cdot \vec{\mu}=[p_2^{\ast}-\frac{1}{g_2}(\nabla \cdot v_2)_{\Omega_2})]\vec{\mu}& \text{\; on \;} \Gamma_2,
\\
\beta_2 [(\nabla v_2)_{\Omega_2}-(\nabla v_2)_{\Omega^c}]\cdot \vec{\mu}=[p_2^{\ast}-\frac{1}{g_2}(\nabla \cdot v_2)_{\Omega_2})]\vec{\mu}& \text{\; on \;} \Gamma_2,
\\
\beta_1 [(\nabla v_1)_{\Omega_1}-(\nabla v_1)_{\Omega_2}]\cdot \vec{\nu}=[(p_1^{\ast}-p_2^{\ast})+\frac{1}{g_2}(\nabla \cdot v_2)_{\Omega_2})-\frac{1}{g_1}(\nabla \cdot v_1)_{\Omega_1}] \vec{\nu}& \text{\; on \;} \Gamma,
\\
\beta_2 [(\nabla v_2)_{\Omega_1}-(\nabla v_2)_{\Omega_2}]\cdot \vec{\nu}=[(p_1^{\ast}-p_2^{\ast})+\frac{1}{g_2}(\nabla \cdot v_2)_{\Omega_2})-\frac{1}{g_1}(\nabla \cdot v_1)_{\Omega_1}]\vec{\nu}& \text{\; on \;} \Gamma,
\\
(v_1)_{\Omega_1}=(v_1)_{\Omega_2}, \quad (v_2)_{\Omega_1}=(v_2)_{\Omega_2},\quad v_1\cdot \vec{\nu}=v_2\cdot \vec{\nu}, & \text{\; on \;} \Gamma,
\\
(v_1)_{\Omega_1}=(v_1)_{\Omega^c}, \quad (v_2)_{\Omega_1}=(v_2)_{\Omega^c}, & \text{\; on \;} \Gamma_1,
\\
(v_1)_{\Omega_2}=(v_1)_{\Omega^c}, \quad (v_2)_{\Omega_2}=(v_2)_{\Omega^c}, & \text{\; on \;} \Gamma _2,
\\
v_1=v_2=0 & \text{\; on \;} \partial \Theta.
\end{array}
\right.
\end{equation*}
We recall that with our choice of linear growth functions $G_1(s)=g_1(p_1^\ast-s)$, and $G_2(s)=g_2(p_2^\ast-s)$ with $g_1,g_2,p_1^\ast,p_2^\ast>0$, the expression of the pressure $p$ on the interfaces $\Gamma_1, \Gamma_2$ and $\Gamma$ reduces to \eqref{expP}, so that the transmission conditions on the interfaces become,
\begin{equation}
\label{jumpyjumps} \left\{\begin{array}{ll}
\beta_1 [(\nabla v_1)_{\Omega_1}-(\nabla v_1)_{\Omega^c}]\cdot \vec{\nu}=
\beta_2 [(\nabla v_2)_{\Omega_1}-(\nabla v_2)_{\Omega^c}]\cdot \vec{\nu}=[(p)_{\Omega_1}-(p)_{\Omega^c}] \vec{\nu}& \text{\; on \;} \Gamma_1,
\\
\beta_1 [(\nabla v_1)_{\Omega_2}-(\nabla v_1)_{\Omega^c}]\cdot \vec{\mu}
= \beta_2 [(\nabla v_2)_{\Omega_2}-(\nabla v_2)_{\Omega^c}]\cdot \vec{\mu}=[(p)_{\Omega_2}-(p)_{\Omega^c}]\vec{\mu}& \text{\; on \;} \Gamma_2,
\\
\beta_1 [(\nabla v_1)_{\Omega_1}-(\nabla v_1)_{\Omega_2}]\cdot \vec{\nu}
= \beta_2 [(\nabla v_2)_{\Omega_1}-(\nabla v_2)_{\Omega_2}]\cdot \vec{\nu}
=[(p)_{\Omega_1}-(p)_{\Omega_2}]\vec{\nu}& \text{\; on \;} \Gamma.
\end{array}
\right.
\end{equation}

In models incorporating surface tension, the pressure jump depends on the free boundary curvature (see \cite{surf} for example in the case of a Cahn-Hilliard equation). In our case, the presence of the viscosity parameters $\beta_i>0$ is behind the discontinuity of the pressure as can be seen in \eqref{jumpyjumps}. This is in accordance with previous results on Darcy's law \cite{PQF}. In the case where the velocity follows the Brinkman law and taken of gradient form, which is the framework of \citep{PV}, a pressure jump is also shown (see also \citep{velcomp} for explicit computations of the pressure jump in multi-dimensions).

We now prove the existence of a pressure jump. Since, from \eqref{jumpyjumps}, the jumps of the pressure and the jumps of the gradients of the velocities are very related, we first prove that the gradients of the velocities have jumps.

\begin{proposition} Under the assumptions (\ref{cc1})-(\ref{cc3}) and (\ref{cdd1}), the solution $(v_1,v_2) \notin H^2(\Theta)^2,$ with $(v_1,v_2)$ the solution of the system $(T_{VM})$.  \label{v1v2h2}
\end{proposition}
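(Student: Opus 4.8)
The plan is to argue by contradiction, first reducing the $H^2$ regularity question to the continuity of the pressure across the interfaces, and then exploiting the strictly positive homeostatic forcing inside each tissue. Throughout I work with System $(S_2)$ with $q=0$, which is equivalent to $(T_{VM})$ by Proposition~\ref{prop:TP}.

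First I would assume, for contradiction, that $(v_1,v_2)\in H^2(\Theta)^2$. With $q=0$ the two lines of $(S_2)$ read $-\beta_i\Delta v_i+v_i=-\nabla p$ in $\mathcal{D}'(\Theta)$, where $p$ is the pressure \eqref{expP}. Since $v_i\in H^2(\Theta)$ the left-hand sides lie in $L^2(\Theta)$, hence $\nabla p\in L^2(\Theta)$ and $p\in H^1(\Theta)$; in particular $p$ has no jump across $\Gamma_1,\Gamma_2,\Gamma$. Combined with $p\equiv 0$ on $\Omega^c$, this forces $(p)_{\Omega_1}=0$ on $\Gamma_1$ and $(p)_{\Omega_2}=0$ on $\Gamma_2$, i.e. $p\in H^1_0(\Omega)$. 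Reading the transmission conditions \eqref{jumpyjumps} backwards, the vanishing of the pressure jumps is precisely the vanishing of the normal-derivative jumps of $v_1,v_2$; and by elliptic regularity for the Brinkman operator on the smooth domain $\Theta$ (with $L^2$ forcing $-\nabla p$), $p\in H^1(\Theta)$ conversely yields $v_i\in H^2(\Theta)$. This already shows that Proposition~\ref{v1v2h2} and the pressure jump of Proposition~\ref{p1p2jump} are two faces of the same statement: $(v_1,v_2)\in H^2(\Theta)^2$ if and only if the pressure is continuous across the interfaces.

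Next I would turn this into a closed scalar problem for $p$. Taking the divergence of the Brinkman equation and inserting the complementary relation $\nabla\cdot v_i=G_i(p)=g_i(p_i^\ast-p)$, valid on $\Omega_i$ (from \eqref{plim} and the linear growth \eqref{lineargrowth}), one obtains $-\Delta p+c_i\,p=c_i\,p_i^\ast$ on $\Omega_i$, with $c_i=g_i/(1+\beta_i g_i)>0$. Using Theorem~\ref{reg+} with the smooth datum $q=0$, the solution is smooth up to each interface from inside each subdomain, so that all the interface traces below are classical, and the source term $c_i p_i^\ast>0$ prevents $p$ from vanishing identically on $\Omega_i$.

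The crux is to derive a contradiction from $(p)_{\Omega_1}=0$ on $\Gamma_1$ (say). I expect this to be the \emph{main obstacle}, because the obstruction is genuinely nonlocal: pointwise matching and the natural energy identities remain consistent, since $p\in H^1(\Theta)\setminus H^2(\Theta)$ still permits $\nabla p$ (equivalently $\nabla(\nabla\cdot v_1)$) to jump across $\Gamma_1$, so no local Cauchy-data overdetermination appears. I therefore plan to use the exterior problem as a \emph{Dirichlet-to-Neumann mismatch}: on $\Omega^c$ one has the homogeneous equation $-\beta_1\Delta v_1+v_1=0$ with $v_1=0$ on $\partial\Theta$, so the exterior normal trace $(\partial_\nu v_1)_{\Omega^c}$ on $\partial\Omega$ is determined by the interface Dirichlet trace; the $H^2$ assumption demands that this output coincide with the interior normal trace, for which $\nabla\cdot v_1=g_1 p_1^\ast$ on $\Gamma_1$. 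Since $p\not\equiv 0$ on $\Omega_1$ is forced by the positive homeostatic source, this compatibility cannot hold, giving the contradiction. The delicate point is to make this global mismatch rigorous under the sole assumptions \eqref{cc1}--\eqref{cc3}; the most transparent route is to reduce to a scalar transmission problem whose coefficient jumps across $\Gamma_1$ (value $\beta_1$ on $\Omega^c$ against $\beta_1+1/g_1$ on $\Omega_1$) and to compute the interface trace explicitly in the concentric configuration of Figure~\ref{fig:schema}, admissible under \eqref{cc4}, where the equations reduce to modified-Bessel ODEs and the pressure trace is found to be a nonzero constant.
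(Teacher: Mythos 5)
There is a genuine gap: the contradiction itself is never established. Your reduction is correct and even illuminating --- assuming $(v_1,v_2)\in H^2(\Theta)^2$, the Brinkman equations give $\nabla p\in L^2(\Theta)$, hence $p\in H^1(\Theta)$, so the pressure cannot jump and in particular $(p)_{\Omega_1}=0$ on $\Gamma_1$, $(p)_{\Omega_2}=0$ on $\Gamma_2$; the scalar equation $-\Delta p+c_i p=c_i p_i^\ast$ with $c_i=g_i/(1+\beta_i g_i)>0$ is also correctly derived, and it does show $p\not\equiv 0$ on each $\Omega_i$. But at the crucial moment the argument becomes an assertion: ``this compatibility cannot hold.'' You yourself point out why this is not obvious --- the scalar problem for $p$ is \emph{not} overdetermined, since $p\in H^1\setminus H^2$ still allows $\nabla p$ to jump, so no local Cauchy-data contradiction is available. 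The proposed Dirichlet-to-Neumann mismatch is exactly the hard part, and it is left unproven. The fallback --- an explicit modified-Bessel computation in the concentric configuration --- would, even if carried out, only prove the proposition for that particular radially symmetric geometry, whereas the statement is for arbitrary domains satisfying \eqref{cc1}--\eqref{cc3}; moreover it invokes \eqref{cc4}, which is not among the stated hypotheses of your reduction for general $\Gamma_1,\Gamma_2,\Gamma$.

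The missing idea, which is how the paper closes the argument, is a \emph{global energy identity} rather than a trace/DtN analysis. Under the $H^2$ assumption, the continuity of the full gradients $\nabla v_1,\nabla v_2$ forces the right-hand sides of the jump conditions in $(T_{VM})$ to vanish, i.e. $(\nabla\cdot v_1)_{\Omega_1}=g_1 p_1^\ast$ on $\Gamma_1$, etc. One then multiplies each bulk equation for $v_1$ by $-\Delta v_1$ (and likewise for $v_2$), integrates on $\Omega_1$, $\Omega_2$, $\Omega^c$ and sums. After Green's identities, the interface terms involve only the constants $p_1^\ast,p_2^\ast$ (by the vanishing-jump relations) multiplying $\int_{\partial\Omega_j}\left[\nabla(\nabla\cdot v_1)-\Delta v_1\right]\cdot\vec{\nu}$, which is zero by the divergence theorem because $\nabla\cdot\left[\nabla(\nabla\cdot v_1)-\Delta v_1\right]=0$. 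What remains is a quadratic form in $(v_1,v_2)$ that is coercive under \eqref{cdd1} (via Young's inequality on the cross terms) and equals zero, hence $\nabla v_1\equiv 0$ on $\Theta$ --- which is incompatible with $(\nabla\cdot v_1)_{\Omega_1}=g_1 p_1^\ast>0$ on $\Gamma_1$. Note how this exploits the vector structure (continuity of the whole gradient, not just of $\nabla\cdot v_i$), which is precisely the information your scalar reduction to $p$ discards; that is why the scalar route stalls where it does.
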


\begin{proof}Let $(v_1,v_2)$ be the solution of the $(T_{VM})$. Then by Theorem \ref{wellpreg} the solution $(v_1,v_2)$ is in $H^1_0(\Theta)^2 \times H^1_0(\Theta)^2 $. Furthermore by Theorem \ref{reg+}, the solution $(v_1,v_2)$ lies in $\mathcal{C}^{\infty}(\overline{\Omega_1})\cap\mathcal{C}^{\infty}( \overline{\Omega_2}) \cap \mathcal{C}^{\infty}( \overline{\Omega^c})$. Then the solution $(v_1,v_2)$ is in $H^2({\Omega_1})\cap H^2( {\Omega_2}) \cap H^2({\Omega^c})$.
We now  proceed by contradiction: let us assume that $v_1,v_2 \in H^{2}(\Theta)^2$ and show that there is a contradiction.

For each of the three first equations in ($T_{VM}$), we multiply (dot product) by $-\Delta v_1$ and integrate on their respective domains. By summing the result, we obtain thanks to an additional Green's identity,
\begin{eqnarray}\label{jumpy1}
\beta_1 \|\Delta v_1\|_{L^2(\Theta)}^2 + \|\nabla v_1\|_{L^2(\Theta)}^2 =- \frac{1}{g_1}\int_{\Omega_1} \nabla \nabla\cdot v_1 \cdot \Delta v_1 - \frac{1}{g_2}\int_{\Omega_2} \nabla \nabla\cdot v_2 \cdot \Delta v_1.
\end{eqnarray}
Using twice Green's identity, we compute the last term as,
\begin{eqnarray}
\nonumber 
&& - \frac{1}{g_2}\int_{\Omega_2} \nabla \nabla\cdot v_2 \cdot \Delta v_1 \\
\nonumber 
&=& \frac{1}{g_2}\int_{\Omega_2} \nabla\cdot v_2 \, \Delta \nabla \cdot v_1 - \frac{1}{g_2} \int_{\partial \Omega_2} \nabla\cdot v_2 \, \Delta v_1 \cdot \vec{\mu} \\
\nonumber 
&=& - \frac{1}{g_2}\int_{\Omega_2} \nabla (\nabla\cdot v_2) \cdot \nabla (\nabla \cdot v_1) + \frac{1}{g_2}\int_{\partial \Omega_2} \nabla\cdot v_2 \, \nabla (\nabla \cdot v_1) \cdot \vec{\mu} - \frac{1}{g_2} \int_{\partial \Omega_2} \nabla\cdot v_2\, \Delta v_1 \cdot \vec{\mu} \, .
\end{eqnarray}
Reinserting in \eqref{jumpy1} and using the same computation for the first term of the right-hand-side, we obtain,
\begin{eqnarray}
\beta_1 \|\Delta v_1\|_{L^2(\Theta)}^2 & + & \|\nabla v_1\|_{L^2(\Theta)}^2 \nonumber \\
= & - &\frac{1}{g_1}\int_{\Omega_1} \nabla (\nabla\cdot v_1) \cdot \nabla (\nabla \cdot v_1) + \frac{1}{g_1}\int_{\partial \Omega_1} \nabla\cdot v_1 \, \left[ \nabla (\nabla \cdot v_1) -  \Delta v_1 \right] \cdot \vec{\nu}
\nonumber \\
& - & \frac{1}{g_2}\int_{\Omega_2} \nabla (\nabla\cdot v_2) \cdot \nabla (\nabla \cdot v_1) + \frac{1}{g_2}\int_{\partial \Omega_2} \nabla\cdot v_2 \, \left[ \nabla (\nabla \cdot v_1) -  \Delta v_1 \right] \cdot \vec{\mu} \,.
\label{jumpy2}
\end{eqnarray}
Now, using the assumption $v_1,v_2 \in H^{2}(\Theta)^2$, we get that $\nabla v_1$ and $\nabla v_2$ are continuous across all interfaces $\Gamma_1$, $\Gamma_2$, $\Gamma$. We therefore deduce from ($T_{VM}$) that,
\begin{eqnarray}
0=[p_1^{\ast}-\frac{1}{g_1}(\nabla \cdot v_1)_{\Omega_1})] \vec{\nu}& \text{\; on \;} \Gamma_1,
\label{nojumpG1} \\
0=[p_2^{\ast}-\frac{1}{g_2}(\nabla \cdot v_2)_{\Omega_2})]\vec{\mu}& \text{\; on \;} \Gamma_2,
\label{nojumpG2} \\
0=[(p_1^{\ast}-p_2^{\ast})+\frac{1}{g_2}(\nabla \cdot v_2)_{\Omega_2})-\frac{1}{g_1}(\nabla \cdot v_1)_{\Omega_1}] \vec{\nu}& \text{\; on \;} \Gamma\, .
\label{nojumpG} 
\end{eqnarray}
Coming back to \eqref{jumpy2}, we get,
\begin{eqnarray} \nonumber
&&\beta_1 \|\Delta v_1\|_{L^2(\Theta)}^2 + \|\nabla v_1\|_{L^2(\Theta)}^2
+ \frac{1}{g_1}\| \nabla (\nabla\cdot v_1) \|_{L^2(\Omega_1)}^2 + \frac{1}{g_2}\int_{\Omega_2} \nabla (\nabla\cdot v_2) \cdot \nabla (\nabla \cdot v_1) \\
\nonumber
&=& \frac{1}{g_1}\int_{\partial \Omega_1} \nabla\cdot v_1 \, \left[ \nabla (\nabla \cdot v_1) -  \Delta v_1 \right] \cdot \vec{\nu}
+ \frac{1}{g_2}\int_{\partial \Omega_2} \nabla\cdot v_2 \, \left[ \nabla (\nabla \cdot v_1) -  \Delta v_1 \right] \cdot \vec{\mu}
\\
\nonumber
&=&
 p_1^\ast \int_{\Gamma_1} \left[ \nabla (\nabla \cdot v_1) -  \Delta v_1 \right] \cdot \vec{\nu}
+ p_2^\ast \int_{\Gamma_2} \left[ \nabla (\nabla \cdot v_1) -  \Delta v_1 \right] \cdot \vec{\mu}
\\ \nonumber &&
+ \int_{\Gamma} \left(\frac{1}{g_1} \nabla\cdot v_1 - \frac{1}{g_2} \nabla\cdot v_2 \right) \, \left[ \nabla (\nabla \cdot v_1) -  \Delta v_1 \right] \cdot \vec{\nu} \\
\nonumber
&=&
 p_1^\ast \int_{\Gamma_1} \left[ \nabla (\nabla \cdot v_1) -  \Delta v_1 \right] \cdot \vec{\nu}
+ p_2^\ast \int_{\Gamma_2} \left[ \nabla (\nabla \cdot v_1) -  \Delta v_1 \right] \cdot \vec{\mu}
%\\ \nonumber
+ \int_{\Gamma} \left( p_1^\ast - p_2^\ast \right) \, \left[ \nabla (\nabla \cdot v_1) -  \Delta v_1 \right] \cdot \vec{\nu}
\nonumber \\
&=&
p_1^\ast \int_{\partial \Omega_1} \left[ \nabla (\nabla \cdot v_1) -  \Delta v_1 \right] \cdot \vec{\nu}
+ p_2^\ast \int_{\partial \Omega_2} \left[ \nabla (\nabla \cdot v_1) -  \Delta v_1 \right] \cdot \vec{\mu} \, .
\label{jumpy3}
\end{eqnarray}
Applying a final Green's identity gives, for $j=1,2$,
\begin{equation*}
\int_{\partial \Omega_j} \left[ \nabla (\nabla \cdot v_1) -  \Delta v_1 \right] \cdot \vec{\nu} = \int_{\Omega_j} \nabla \cdot \left[ \nabla (\nabla \cdot v_1) -  \Delta v_1 \right] =0,
\end{equation*}
so that,
\begin{equation*}
\beta_1 \|\Delta v_1\|_{L^2(\Theta)}^2 + \|\nabla v_1\|_{L^2(\Theta)}^2
+ \frac{1}{g_1}\| \nabla (\nabla\cdot v_1) \|_{L^2(\Omega_1)}^2 + \frac{1}{g_2}\int_{\Omega_2} \nabla (\nabla\cdot v_2) \cdot \nabla (\nabla \cdot v_1)
= 0 \, .
\end{equation*}
Following the same procedure for $v_2$, and summing the result, we finally get
\begin{eqnarray*}
\beta_1 \|\Delta v_1\|_{L^2(\Theta)}^2 + \beta_2 \|\Delta v_2\|_{L^2(\Theta)}^2 + \|\nabla v_1\|_{L^2(\Theta)}^2
+ \|\nabla v_2\|_{L^2(\Theta)}^2 & \\
+ \frac{1}{g_1}\| \nabla (\nabla\cdot v_1) \|_{L^2(\Omega_1)}^2
+ \frac{1}{g_2}\| \nabla (\nabla\cdot v_2) \|_{L^2(\Omega_2)}^2 & \\
+ \frac{1}{g_1}\int_{\Omega_1} \nabla (\nabla\cdot v_2) \cdot \nabla (\nabla \cdot v_1)
+ \frac{1}{g_2}\int_{\Omega_2} \nabla (\nabla\cdot v_2) \cdot \nabla (\nabla \cdot v_1) & = 0 \, .
\end{eqnarray*}

Then using Young's inequality on the two last terms we get
\begin{eqnarray}
\beta_1 \|\Delta v_1\|_{L^2(\Theta)^2} + \|\nabla v_1\|_{L^2(\Theta)^2} +\beta_2 \|\Delta v_2\|_{L^2(\Theta)^2} + \|\nabla v_2\|_{L^2(\Theta)^2}
\nonumber \\
+\frac{1}{g_2}\|\nabla \nabla \cdot v_2 \|^2_{L^2(\Omega_2)^2}+\frac{1}{g_1}\|\nabla \nabla \cdot v_1 \|^2_{L^2(\Omega_1)^2}
\nonumber \\
\leq\frac{1}{g_1}\|\nabla \nabla \cdot v_1\|^2_{L^2(\Omega_1)^2}+\frac{1}{4g_1}\|\nabla \nabla \cdot v_2\|^2_{L^2(\Omega_1)^2}+\frac{1}{g_2}\|\nabla \nabla \cdot v_2\|^2_{L^2(\Omega_2)^2}+\frac{1}{4g_2}\|\nabla \nabla \cdot v_1\|^2_{L^2(\Omega_2)^2}
\nonumber 
\end{eqnarray}
We then can write
\begin{eqnarray}
\left(\beta_1-\frac{1}{4g_2}\right) \|D^2 v_1\|^2_{L^2(\Theta)^2} + \|\nabla v_1\|^2_{L^2(\Theta)^2} %\\ 
\nonumber 
+\left(\beta_2-\frac{1}{4g_1}\right) \|D^2 v_2\|^2_{L^2(\Theta)^2} + \|\nabla v_2\|^2_{L^2(\Theta)^2} \leq 0.
\end{eqnarray}

The ellipticity condition (\ref{cdd1}) gives positive coefficients in the above inequality. Then we can deduce that
$\nabla v_1\equiv 0$ on $\Theta$.
This is incompatible with \eqref{nojumpG1}, so that we have a contradiction, which proves the result.

\end{proof}

As a corollary we obtain Proposition \ref{p1p2jump}.
\begin{proof}[Proof of Proposition \ref{p1p2jump}]
Taking $\phi \in (\mathcal{C}^{\infty}_c(\Theta))^2$ we have, using the $C^\infty$ regularity of $v_1$ on each subset $\overline \Omega_1$, $\overline \Omega_2$ and  $\overline {\Omega^c}$,
\begin{eqnarray}
\langle\Delta v_1,\phi\rangle &=&\int_{\Theta}v_1 \cdot \Delta \phi\nonumber \\
&=&\int_{\Omega_1} v_1 \cdot \Delta \phi + \int_{\Omega_2} v_1 \cdot \Delta \phi+\int_{\Omega^c} v_1 \cdot \Delta \phi \nonumber \\
&=&\int_{\Omega_1}\Delta v_1 \cdot  \phi +\int_{\Omega_2}\Delta v_1  \cdot \phi +\int_{\Omega^c}\Delta v_1 \cdot  \phi \nonumber \\
&&+\int_{\Gamma_1\cup \Gamma} \nabla \phi \,  v_1\cdot \vec{\nu} +\int_{\Gamma_2\cup \Gamma} \nabla \phi \, v_1\cdot \vec{\mu} +\int_{\Gamma_1\cup \Gamma_2\cup \partial \Theta} \nabla \phi \, v_1\cdot \vec{\tau} \nonumber\\
&& -\int_{\Gamma_1\cup \Gamma}( \nabla v_1)_{\Omega_1}\, \phi \cdot \vec{\nu}  -\int_{\Gamma_2\cup \Gamma} (\nabla v_1)_{\Omega_2} \, \phi \cdot \vec{\mu} -\int_{\Gamma_1\cup \Gamma_2\cup \partial \Theta}(\nabla v_1)_{\Omega^c} \, \phi \cdot \vec{\tau}, \nonumber
\end{eqnarray}
where $\vec{\tau}$ is the outward normal vector to $\Omega^c$. In particular we have $\vec{\tau}=-\vec{\nu}$ on $\Gamma_1$ and $\vec{\tau}=-\vec{\mu}$ on $\Gamma_2$.
Now, let us assume that $p$ is continuous across $\Gamma_1\cup\Gamma_2\cup\Gamma$. 
Then by the transmission conditions in \eqref{jumpyjumps}, we have that the jump of $\nabla v_1\, \vec{\nu}$ across $\Gamma_1\cup\Gamma_2\cup\Gamma$ is zero. Therefore, in the above computation, the integrals on all the interfaces vanish. Finally we can write,
\begin{eqnarray}
\left| \langle\Delta v_1,\phi \rangle \right| \leq \|\phi\|_{L^2(\Theta)} (\|D^2v_1\|_{L^2(\Omega_1)^2}+\|D^2v_1\|_{L^2(\Omega_2)^2})+\|D^2v_1\|_{L^2(\Omega^c)^2}), \nonumber
\end{eqnarray}
then $\Delta v_1 \in L^2(\Theta)^2$ and we obtain $v_1\in H^2(\Theta)^2$. Similarly we get $v_2\in H^2(\Theta)^2$. This is impossible according to Proposition \ref{v1v2h2}, and therefore, we have proved that $p$ cannot be continuous across the interfaces.
\end{proof}

%%%%%%%%%%%%%%%%%%%%%%%%%%%%%%%%%%%%%%%%%%%%%%%%%%%%%%%%%%%
%%%%%%%%%%%%%%%%%%%%%%%%%%%%%%%%%%%%%%%%%%%%%%%%%%%%%%%%%%%
%%%%%%%%%%%%%%%%%%%%%%%%%%%%%%%%%%%%%%%%%%%%%%%%%%%%%%%%%%%
\section{Discussion}\label{sec:discussion}
%%%%%%%%%%%%%%%%%%%%%%%%%%%%%%%%%%%%%%%%%%%%%%%%%%%%%%%%%%%
%%%%%%%%%%%%%%%%%%%%%%%%%%%%%%%%%%%%%%%%%%%%%%%%%%%%%%%%%%%
%%%%%%%%%%%%%%%%%%%%%%%%%%%%%%%%%%%%%%%%%%%%%%%%%%%%%%%%%%%

We presented an evolution model of two proliferating tissues in contact with each other and subject to an enforced segregation (ESVM). Apart from the enforced segregation, the novelty of this model lies in the law of the velocity: it is governed by the Brinkman law considered in a bounded domain, whose boundaries might affect the dynamics of the system.
This setting allows to see swirling motions (non trivial curl) within the tissues, which corresponds to our biological motivation coming from the modelling of the embryo elongation.

We then established the incompressible limit for this model and obtained an incompressible system (L-ESVM) with a geometric description of the free boundaries.
In this step we derived an evolution equation on the repulsion pressure, which, unintuitively, remains present at the limit, even though we attain full segregation. This repulsion pressure affects the dynamics of the shape of the tissues, specifically the velocities of the outer boundaries of the tissues and of their common interface. We called this a \emph{ghost effect}. This effect is also supported by our numerical tests that show a finite effect of the repulsion pressure in the ESVM in its incompressible asymptotic regime.

One first question that arises is: what does this remaining repulsion pressure reveal about the embryo's tissues ?
%does this remaining repulsion pressure correspond to the biological data ? 
%Our analysis on the asymptotic tends to show that the repulsion pressure may vanish or persist at the limit depending on the modeling situation. 
Having a closer look at our numerical simulations, we observed that the repulsion pressure affects the swirling motions in the PSM. More precisely, it allows adjacent zones of opposite curls to appear in the anterior part of the PSM, a pattern that is observed in the experiments on the bird embryo. Such pattern is not recovered when only passive segregation is assumed, that is, when the repulsion pressure is removed from the model. This suggests that an active segregation is at play to maintain tissue segregation in the vertebrate embryo. This question still requires to be fully explored numerically and compared to experimental data. This will be the subject of a future work.

%On the mathematical level, while preliminary numerical simulations indicate that the repulsion pressure $q$ may persist and affect the dynamics of the system at the limit, this question still requires to be fully explored numerically. This will be the subject of a future work.

A second striking feature arising from the model at the incompressible limit is the existence of a pressure jump across the interfaces. We showed this result in particular in situations where the repulsion pressure vanishes.
This feature had already been observed in the specific case where the velocity obeys the Brinkman law and can be written as a gradient in \citep{PV}. This situation is for example met if the system is considered in the whole space. Here, we extend the result to situations in bounded domains with boundary conditions that may affect the dynamics. We mention that the pressure jump should also exist in the case where the repulsion pressure does not vanish, as the jump is viscosity-induced.

The derivation of the incompressible system that we presented here is formal. The case of a velocity that is not in a gradient form, and in the case of a single species, has not been made rigorous yet. We pursue this aim in a future work.

%%%%%%%%%%%%%%%%%%%%%%%%%%%%%%%%%%%%%%%%%%%%%%%%%%%%%%%%%%%
%%%%%%%%%%%%%%%%%%%%%%%%%%%%%%%%%%%%%%%%%%%%%%%%%%%%%%%%%%%
%%%%%%%%%%%%%%%%%%%%%%%%%%%%%%%%%%%%%%%%%%%%%%%%%%%%%%%%%%%
\section{Appendix : Derivation of the transmission problem}\label{sec:appendix}
%%%%%%%%%%%%%%%%%%%%%%%%%%%%%%%%%%%%%%%%%%%%%%%%%%%%%%%%%%%
%%%%%%%%%%%%%%%%%%%%%%%%%%%%%%%%%%%%%%%%%%%%%%%%%%%%%%%%%%%
%%%%%%%%%%%%%%%%%%%%%%%%%%%%%%%%%%%%%%%%%%%%%%%%%%%%%%%%%%%

This section is devoted to the derivation of the transmission problems in $\mathbb{R}^2$. We first do the complete computations of the derivation of the transmission problem in the case of a single species for simplicity. Similar rigorous computations (though not shown in this paper) hold for the transmission problems $(T_2)$ and $(T_{VM})$ respectively for the L-ESVM for a given $q^\infty$ and for the L-VM. We then present a formal version of the transmission problem when $q^{\infty}$ is a stationary solution of (\ref{u1inf})-(\ref{u2inf}) (and not anymore a given $L^2$ function as in the simplified framework of section \ref{sec:studystatLESVM} and section \ref{sec:tran_mod1}).

\subsection{Derivation of the transmission problem for the single species case} 
If we take either densities ($n_1$ or $n_2$) equal to zero in the VM, we obtain a single species model (SS) for the density and the velocity. The system (SS) is as follows: for all $(t,x) \in [0;+\infty)\times \mathbb{R}^{d}$,
\begin{eqnarray}
    && \partial_{t}n+ \nabla \cdot (nv)=nG(p_{\epsilon}), \label{none}\\
    && -\beta\Delta v+v=-\nabla p_{\epsilon},\label{vone} \\
    && p_{\epsilon}=\epsilon \frac{n}{1-n}. \label{pone}
\end{eqnarray}
Notice here that this system, with the general form of the velocity we consider, includes the system in \citep{PV} where the velocity is of gradient form.
In this case, we can also obtain the incompressible limit by taking $\epsilon \xrightarrow{} 0$, and the limiting system can be easily deduced from (\ref{nn1})-(\ref{cong}) by taking either $n_1^\infty$ or $n_2^\infty$ equal to zero, and $q^\infty=0$.

A stationary transmission problem can also be formalized, and we obtain, as a straightforward consequence of the two-species case, the well-posedness and the elliptic regularity results (Theorem \ref{reg+}) as well as the expression of the pressure jump (Proposition \ref{p1p2jump}). Note that the one-species system is elliptic for any $\beta>0$ and $g>0$, so that the results apply without any supplementary condition on these parameters.

The stationary system on the velocity in the single species case is posed in a bounded domain $\Theta \subset \mathbb{R}^2$. We assume $\Omega \subset \Theta$ to be a smooth bounded subdomain with $\overline{\Omega}\subset \Theta$.

The one-species problem coupled with homogeneous Dirichlet boundary conditions on $v^\infty$ is as follows,
$$
(S_1) \;\; 
\left\{
\begin{array}{ll}
-\beta \Delta v^{\infty}+v^{\infty}=- \nabla [(p^{\ast}-\frac{1}{g}\nabla \cdot v^{\infty})\chi_{\Omega}]& \text{\; on \;} \Theta, \label{v'} \\
    v^{\infty}=0& \text{\; on \;} \partial \Theta, 
    \end{array}
     \right.
$$
with $\chi_{\Omega}$ the indicator function of the domain $\Omega$.

%%%%%%%%%%%%%%%%%%%%%%%%%%%%%%%%%%%%%%%%%%%%%%%%%%%%%%%%%%%
\begin{proposition}[Transmission problem, one species]\label{propo:transmission}
Let $\Omega$ be a smooth bounded domain with $\overline{\Omega}\subset \Theta$, and let $\Gamma\coloneqq \partial \Omega$ and $\Omega^c\coloneqq \Theta\backslash\overline{\Omega}$. Let $\beta>0$, $g>0$ and $p^\ast>0$.

Then the solution of system $(S_1)$ solves the following transmission problem $(T_1)$,
 $$
(T_1) \;\; 
\left\{
\begin{array}{ll}
-\beta \Delta v^{\infty} +v^{\infty} -\frac{1}{g} \nabla (\nabla \cdot v^{\infty})= 0 & \text{\; in \;} \Omega,  \\
    -\beta \Delta v^{\infty} +v^{\infty}=0 & \text{\; in \;} \Omega^c, \\
        \beta[(\nabla v^{\infty})_{\Omega}- (\nabla v^{\infty})_{\Omega^c}]\vec{\nu} = [p^{\ast}-\frac{1}{g}(\nabla \cdot v^{\infty})_{\Omega}] \vec{\nu} & \text{\; on \; } \Gamma,\\
        (v^{\infty})_{\Omega}=(v^{\infty})_{\Omega^c} & \text{\; on \; } \Gamma, \\
        v^{\infty}=0 & \text{\; on \; } \partial \Theta,
        %v^{\infty}=h & \text{\; on \; } \partial \Theta
    \end{array}
     \right.
$$
with $\vec{\nu}$ the outward normal on $\Gamma$ and with the notations $(\cdot)_\Omega$ and $(\cdot)_{\Omega^c}$ defined in \eqref{def_h_D}. 
\end{proposition}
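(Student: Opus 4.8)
The plan is to recover $(T_1)$ from $(S_1)$ in three stages — interior equations, continuity of the velocity, and the jump condition on $\Gamma$ — with the last being the crux. Throughout I rely on the one-species analogues of Theorems~\ref{wellpreg} and~\ref{reg+}: the single-species bilinear form is $\beta\|\nabla v^\infty\|_{L^2(\Theta)}^2+\|v^\infty\|_{L^2(\Theta)}^2+\tfrac1g\|\nabla\cdot v^\infty\|_{L^2(\Omega)}^2$, which is coercive on $H^1_0(\Theta)^2$ for any $\beta,g>0$ (no cross term appears, so condition \eqref{cdd1} is not needed), so Lax--Milgram yields a unique $v^\infty\in H^1_0(\Theta)^2$, and the transmission regularity theory of \cite{li} gives $v^\infty\in\mathcal{C}^{1,\alpha'}(\overline\Omega)\cap\mathcal{C}^{1,\alpha'}(\overline{\Omega^c})$ (the datum $p^\ast$ is constant, hence smooth on each subdomain). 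This $\mathcal{C}^1$-regularity up to $\Gamma$ from each side is what makes the one-sided traces $(\nabla v^\infty)_\Omega$, $(\nabla v^\infty)_{\Omega^c}$, and $(\nabla\cdot v^\infty)_\Omega$ meaningful.

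First I would record the weak formulation: testing $(S_1)$ against $\phi\in H^1_0(\Theta)^2$ and integrating the right-hand side by parts (legitimate since $(p^\ast-\tfrac1g\nabla\cdot v^\infty)\chi_\Omega\in L^2(\Theta)$) gives
\[
\beta\int_\Theta\nabla v^\infty:\nabla\phi+\int_\Theta v^\infty\cdot\phi+\frac1g\int_\Omega(\nabla\cdot v^\infty)(\nabla\cdot\phi)=p^\ast\int_\Omega\nabla\cdot\phi.
\]
Restricting test functions to $\mathcal{C}^\infty_c(\Omega)^2$ and to $\mathcal{C}^\infty_c(\Omega^c)^2$ respectively, and using $\nabla p^\ast=0$, immediately yields the two interior equations of $(T_1)$ (distributionally, and classically by interior elliptic regularity). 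The continuity $(v^\infty)_\Omega=(v^\infty)_{\Omega^c}$ on $\Gamma$ is then simply the statement that the single-valued $H^1_0(\Theta)$-trace of $v^\infty$ coincides with the one-sided continuous traces furnished by the regularity on $\overline\Omega$ and $\overline{\Omega^c}$.

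The main step is the jump condition. I would take a general $\phi\in\mathcal{C}^\infty_c(\Theta)^2$ in the weak formulation, split every integral over $\Theta$ into its parts over $\Omega$ and $\Omega^c$, and apply Green's identity on each subdomain separately, noting that the outward normal on $\partial\Omega=\Gamma$ is $\vec\nu$ while on $\partial\Omega^c\cap\Gamma$ it is $-\vec\nu$ (and $\phi$ vanishes on $\partial\Theta$). The Laplacian term produces $\beta\int_\Gamma[(\nabla v^\infty)_\Omega-(\nabla v^\infty)_{\Omega^c}]\vec\nu\cdot\phi$, the divergence term produces $\tfrac1g\int_\Gamma(\nabla\cdot v^\infty)_\Omega(\phi\cdot\vec\nu)$, and the right-hand side produces $p^\ast\int_\Gamma\phi\cdot\vec\nu$; all volume contributions cancel by the interior equations just established. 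Collecting the surface integrals leaves
\[
\int_\Gamma\Big(\beta[(\nabla v^\infty)_\Omega-(\nabla v^\infty)_{\Omega^c}]\vec\nu+\tfrac1g(\nabla\cdot v^\infty)_\Omega\,\vec\nu-p^\ast\vec\nu\Big)\cdot\phi\,dS=0,
\]
and since the traces on $\Gamma$ of functions $\phi\in\mathcal{C}^\infty_c(\Theta)^2$ are dense in $L^2(\Gamma)^2$, the bracketed vector field vanishes on $\Gamma$, which is exactly $\beta[(\nabla v^\infty)_\Omega-(\nabla v^\infty)_{\Omega^c}]\vec\nu=[p^\ast-\tfrac1g(\nabla\cdot v^\infty)_\Omega]\vec\nu$.

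The main obstacle is the rigorous justification of the subdomain Green's identities up to $\Gamma$: the global $H^1$-bound alone does not provide normal traces of $\nabla v^\infty$, so the derivation genuinely depends on the $\mathcal{C}^{1,\alpha'}$ regularity up to the interface from both sides, i.e. on the theory of \cite{li}. Conceptually this is where the jump originates: the distributional gradient of the discontinuous pressure $(p^\ast-\tfrac1g\nabla\cdot v^\infty)\chi_\Omega$ carries a single-layer term supported on $\Gamma$, and matching it against the jump of $\beta\nabla v^\infty$ is precisely the transmission condition. As an alternative — closer to the method sketched for Proposition~\ref{prop:TP} — one could mollify $\chi_\Omega$ by smooth $\chi_\Omega^n$, test against $\tilde\phi_n=\phi[1-\chi_\Omega^n-\chi_{\Omega^c}^n]$ concentrated near $\Gamma$, and pass to the limit $n\to\infty$, thereby localizing the computation to a neighborhood of the interface and reproducing the same surface identity.
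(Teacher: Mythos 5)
Your proposal is correct, but your primary route to the key step (the jump condition) is genuinely different from the paper's. The paper derives the interface condition by testing the weak formulation against $\tilde\phi_n=\phi\,\chi_n$, where $(\chi_n)_n$ is a sequence of smooth cutoffs concentrating at $\Gamma$, with $\chi_n\to0$ in $L^p(\Theta)$ and $\int_\Theta w\cdot\nabla\chi_n\to\int_\Gamma (w)_{\Omega^c}\cdot\vec\nu-\int_\Gamma (w)_{\Omega}\cdot\vec\nu$ for any $w$ continuous up to $\Gamma$ from each side; passing to the limit $n\to\infty$ produces the surface identity directly, using only the $\mathcal{C}^1$ regularity of $v^\infty$ on $\overline{\Omega}$ and $\overline{\Omega^c}$ and never manipulating second derivatives. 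You instead split the weak formulation over $\Omega$ and $\Omega^c$, integrate by parts on each subdomain, and cancel the volume terms via the interior equations; this is the classical derivation and is arguably more transparent, but, as you rightly flag, it requires justifying Green's identity up to $\Gamma$ when $\Delta v^\infty$ and $\nabla(\nabla\cdot v^\infty)$ are only known to be smooth in the open subdomains. That justification is routine but should be recorded: integrate by parts on a shrunk domain $\Omega_\delta=\{x\in\Omega:\,d(x,\Gamma)>\delta\}$, where the interior PDE annihilates the volume term exactly, and let $\delta\to0$; the boundary integrals converge by the uniform continuity of $\nabla v^\infty$ up to $\partial\Omega$ furnished by \cite{li}. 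This exhaustion step is precisely what the paper's cutoff device buys you for free, at the cost of constructing $(\chi_n)_n$. Both arguments consume the same regularity input, and your remaining steps match the paper's: coercivity of the one-species form without condition \eqref{cdd1}, interior equations by localized test functions, continuity of $v^\infty$ across $\Gamma$ from the $H^1_0(\Theta)$ trace, and the density of restrictions of $\mathcal{C}^\infty_c(\Theta)^2$ functions in $L^2(\Gamma)^2$ to conclude. (The mollification alternative you sketch in your last sentence is, in essence, the paper's own proof.)
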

%%%%%%%%%%%%%%%%%%%%%%%%%%%%%%%%%%%%%%%%%%%%%%%%%%%%%%%%%%%

\begin{remark}
Note that in the following proof we only use the $C^{1}$ regularity of $v^\infty$ in each subdomain (up to the boundary) $\overline\Omega$ and $\overline{\Omega^c}$. This ensures that the proof remains valid for the case of more than one species, thanks to Theorem \ref{wellpreg}.
\end{remark}

\begin{proof}
We consider the weak formulation of $(S_1)$,
\begin{eqnarray}
\beta \int_{\Theta} \nabla v^\infty:\nabla \phi + \int_{\Theta} v^\infty\cdot \phi +\int_{\Theta}\left(p^\ast-\frac{1}{g}\nabla \cdot v^\infty\right)\chi_{\Omega}\nabla \cdot \phi=0, \quad \text{ for all } \phi \; \in \mathcal{C}^\infty_c(\Theta)^2, \label{weakform}
\end{eqnarray}
where the Frobenius inner product is used in the first integral.
Then by extension of test functions $\phi \in \mathcal{C}^{\infty}_{c}(\Omega)^2$ (by 0 outside $\Omega$) we obtain,
\begin{equation}
    -\beta \Delta v^{\infty} -\frac{1}{g}\nabla (\nabla \cdot v^{\infty})+v^{\infty}=0 \text{\; \; in  } \mathcal{D}'(\Omega). \nonumber
\end{equation}
Similarly, by extension of test functions $\phi \in \mathcal{C}^{\infty}_{c}(\Omega^c)^2$ (by 0 outside $\Omega^c$) we obtain,
\begin{equation}
    -\beta \Delta v^{\infty} +v^{\infty}=0 \text{\; \; in  } \mathcal{D}'(\Omega^c). \nonumber
\end{equation}

It remains to obtain the transmission conditions at the interface. For this, we use a sequence of smooth functions $(\chi_n)_n$ that are localized around the interface. More precisely these functions satisfy the following properties,
\begin{equation}
\chi_n \in C^\infty(\Theta),
%\quad 0\le \chi_n \le 1, 
\qquad
\text{for all }n\ge 0 \text{ large enough},
\label{chin_n}
\end{equation}
as well as,
\begin{equation}
    \chi_{n} \xrightarrow[n \to \infty ]{L^{p}(\Theta)}0, \quad  \text{for all } p \in[1,\infty), \label{chin_Lp}
\end{equation}
and finally, for any function $w\in C(\overline{\Omega}) \cap C(\overline{\Omega^c})$,
\begin{equation}
   \lim_{n \to +\infty} \int_\Theta w \cdot \nabla \chi_{n} = \int_{\Gamma} (w)_{\Omega^c} \cdot \vec{\nu} - \int_{\Gamma} (w)_\Omega \cdot \vec{\nu},\label{chin_grad}
\end{equation}
with the notations of \eqref{def_h_D}, with $\vec{\nu}$ the normal vector to $\Gamma$. Such sequence can be constructed explicitely.

Now, we consider a test function $\phi \in \mathcal{C}^{\infty}_{c}(\Theta)^2$, which does not necessarily vanish close to $\Gamma$, and define $\tilde{\phi}_{n}\coloneqq  \phi \chi_n$. We insert the test function $\tilde{\phi}_{n}$  in \eqref{weakform} and get,
\begin{eqnarray}
\beta \int_{\Theta}\nabla v^{\infty} :\nabla \tilde{\phi}_{n} +\int_{\Theta} v^{\infty} \cdot \tilde{\phi}_{n}
     = \int_{\Theta}(p^{\ast}-\frac{1}{g}\nabla \cdot v^{\infty}) \chi_{\Omega}\nabla \cdot \tilde{\phi}_{n},
\end{eqnarray}
which we rewrite,
\begin{eqnarray}
\beta \int_{\Theta} \chi_n \nabla v^{\infty} :\nabla \phi &+& \beta \int_{\Theta}\nabla v^{\infty} :(\nabla \chi_n \otimes \phi) +\int_{\Theta} v^{\infty} \cdot \phi \chi_n\\
     &=& \int_{\Theta} \chi_n (p^{\ast}-\frac{1}{g}\nabla \cdot v^{\infty}) \chi_{\Omega}\nabla \cdot \phi + \int_{\Theta}(p^{\ast}-\frac{1}{g}\nabla \cdot v^{\infty}) \chi_{\Omega}(\nabla \chi_n) \cdot \phi.
\end{eqnarray}
Now using the convergences \eqref{chin_Lp} and \eqref{chin_grad} together with the $C^1$ regularity of $v^\infty$ on each subdomain $\overline{\Omega}$ and $\overline{\Omega^c}$ (up to the boundaries), we can pass to the limit $n\longrightarrow \infty$ and get,
\begin{eqnarray*}
\beta \int_{\Gamma}  \left( \left(\nabla v^{\infty}\right)_{\Omega^c} -\left(\nabla v^{\infty}\right)_{\Omega}\right) :(\vec{\nu} \otimes \phi) 
     =  - \int_{\Gamma}(p^{\ast}-\frac{1}{g}\left(\nabla \cdot v^{\infty}\right)_\Omega) \phi \cdot \vec{\nu}.
\end{eqnarray*}
As this is verified for all $\phi \in \mathcal{C}^{\infty}_{c}(\Theta)^{2}$, particularly by extension of $\phi \in \mathcal{C}^{\infty}_{c}(\Gamma)^{2}$, we have the following condition on the interface,
\begin{equation}
    \beta[(\nabla v^{\infty})_{\Omega}- (\nabla v^{\infty})_{\Omega^c}]\cdot \vec{\nu} = [p^{\ast}-\frac{1}{g}(\nabla \cdot v^{\infty})_{\Omega}]\vec{\nu} \text{\; on \; } \Gamma.
\end{equation}
Then we see that a solution to the variational problem $(S_1)$, using the regularity result and the computations above, is a solution to $(T_1)$,
where the continuity condition comes from the fact that $v^{\infty}$ is in $H^{1}(\Theta)^{2}$ and is then continuous across any hypersurface of $\Theta$.
\end{proof}

This section was dedicated to the rigorous derivation of the transmission problem in the single-species case. Similar computations can be made for the two-species case by adapting the choice of the test function $\tilde{\phi_n}$ to be localized on the interface between the two tissues and on their outer boundaries.

\subsection{A formal transmission problem for the general two species case}
We now present the transmission problem one can obtain formally for the stationary L-ESVM, when $\Theta$, $\Omega_1$, $\Omega_2$ are chosen as in \eqref{cc1}--\eqref{cc3}. Contrarily to the simplified framework of section \ref{sec:studystatLESVM} and section \ref{sec:tran_mod1}, where $q$ is considered given, $q$ is here a stationary solution of (\ref{u1inf})-(\ref{u2inf}). We can rewrite these equations as in \eqref{q1inf} in $\Omega_1$ and \eqref{q2inf} in $\Omega_2$. Then, supposing that $q$ is smooth enough inside each subdomain $\Omega_1$ and $\Omega_2$, the limiting model defined by the equations (\ref{nn1})-(\ref{u2inf}) is equivalent at equilibrium to the following transmission problem $(T_2^*)$ considered on $\Theta $, and coupled with Dirichlet boundary conditions on $\partial \Theta$,

\medskip
\hspace{-0.5cm}
\begin{tabular}{c}
$(T_{2,\Omega_1}^*)
\left\{
  \begin{array}{ll}
  -\beta_1 \Delta v_1+v_1-\frac{1}{g_1}\nabla \nabla \cdot v_1=0 & \text{ in } \Omega_1, \\
  -\beta_2 \Delta v_2+v_2-\frac{1}{g_1}\nabla \nabla \cdot v_1=-\nabla q & \text{ in } \Omega_1,\\
  \nabla \cdot (\log(q+1)v_2)=g_2\log(q+1)[p_2^{\ast}-(p_1^{\ast}-\frac{1}{g_1}\nabla \cdot v_1 +q)] & \text{ in } \Omega_1.
  \end{array}
\right.$\vspace{0.3cm}\\

$(T_{2,\Omega_2}^*)\left\{
  \begin{array}{ll}
  -\beta_1 \Delta v_1+v_1-\frac{1}{g_2}\nabla \nabla \cdot v_2=-\nabla q & \text{ in } \Omega_2, \\
  -\beta_2 \Delta v_2+v_2-\frac{1}{g_2}\nabla \nabla \cdot v_2=0 & \text{ in } \Omega_2, \\
  \nabla \cdot (\log(q+1)v_1)=g_1\log(q+1)[p_1^{\ast}-(p_2^{\ast}-\frac{1}{g_2}\nabla \cdot v_2 +q)] & \text{ in } \Omega_2.
  \end{array}
\right.$
\end{tabular}\vspace{0.3cm}

\hspace{-0.5cm}
\begin{tabular}{c}
$(T_{2,\Omega^c}^*) \left\{
  \begin{array}{ll}
  -\beta_1 \Delta v_1+v_1=0  & \text{ in } \Omega^c,\\
-\beta_2 \Delta v_2+v_2=0  & \text{ in } \Omega^c.\\
  \end{array}
\right.$
\end{tabular}\vspace{0.3cm}

\hspace{-0.5cm}
\begin{tabular}{c}
$(T_{2,\Gamma_1}^*)\left\{
\begin{array}{ll}
\beta_1 [(\nabla v_1)_{\Omega_1}-(\nabla v_1)_{\Omega^c}]\cdot \vec{\nu}=[p_1^{\ast}-\frac{1}{g_1}(\nabla \cdot v_1)_{\Omega_1}] \vec{\nu}  & \text{ on } \Gamma_1,\\
\beta_2 [(\nabla v_2)_{\Omega_1}-(\nabla v_2)_{\Omega^c}]\cdot \vec{\nu}=[p_1^{\ast}+(q)_{\Omega_1}-\frac{1}{g_1}(\nabla \cdot v_1)_{\Omega_1}] \vec{\nu} & \text{ on } \Gamma_1,\\
(v_1)_{\Omega_1}=(v_1)_{\Omega^c}, \quad (v_2)_{\Omega_1}=(v_2)_{\Omega^c}, \quad (v_2q)_{\Omega_1}\cdot \vec{\nu}=0 & \text{ on } \Gamma_1.
\end{array}
\right.$\\
\end{tabular}\vspace{0.3cm}

\hspace{-0.5cm}
\begin{tabular}{c}
$(T_{2,\Gamma_2}^*)\left\{
\begin{array}{ll}
\beta_1 [(\nabla v_1)_{\Omega_2}-(\nabla v_1)_{\Omega^c}]\cdot \vec{\nu}=[p_2^{\ast}+(q)_{\Omega_2}-\frac{1}{g_2}(\nabla \cdot v_2)_{\Omega_2}]\vec{\nu} & \text{ on } \Gamma_2,\\
\beta_2 [(\nabla v_2)_{\Omega_2}-(\nabla v_2)_{\Omega^c}]\cdot \vec{\nu}=[p_2^{\ast}-\frac{1}{g_2}(\nabla \cdot v_2)_{\Omega_2}]\vec{\nu} & \text{ on } \Gamma_2,\\
(v_1)_{\Omega_2}=(v_1)_{\Omega^c}, \quad (v_2)_{\Omega_2}=(v_2)_{\Omega^c}, \quad (v_1q)_{\Omega_2}\cdot \vec{\nu}=0 & \text{ on } \Gamma_2.
\end{array}
\right.$\\
 \end{tabular}\vspace{0.3cm}
 
 \hspace{-0.5cm}
\begin{tabular}{c}
$(T_{2,\Gamma}^*)\left\{
\begin{array}{ll}
\beta_1 [(\nabla v_1)_{\Omega_1}-(\nabla v_1)_{\Omega_2}]\cdot \vec{\nu}=[(p_1^{\ast}-p_2^{\ast})-(q)_{\Omega_2}+\frac{1}{g_2}(\nabla \cdot v_2)_{\Omega_2}-\frac{1}{g_1}(\nabla \cdot v_1)_{\Omega_1}] \vec{\nu} & \text{ on } \Gamma,\\
\beta_2 [(\nabla v_2)_{\Omega_1}-(\nabla v_2)_{\Omega_2}]\cdot \vec{\nu}=[(p_1^{\ast}-p_2^{\ast})+(q)_{\Omega_1}+\frac{1}{g_2}(\nabla \cdot v_2)_{\Omega_2}-\frac{1}{g_1}(\nabla \cdot v_1)_{\Omega_1}]\vec{\nu} & \text{ on } \Gamma,\\
(v_1)_{\Omega_1}=(v_1)_{\Omega_2}, \quad (v_2)_{\Omega_1}=(v_2)_{\Omega_2},\quad v_1\cdot \vec{\nu}=v_2\cdot \vec{\nu}, \quad (q)_{\Omega_1}=(q)_{\Omega_2} & \text{ on } \Gamma.
\end{array}
\right.$
\end{tabular}

\medskip

This transmission problem gives us some insight on the behaviour of the system. It shows on the one hand that $q$ does not have a jump on $\Gamma_1,  \Gamma_2, \Gamma$, unlike the pressure $p$. On the other hand we can observe that the jumps of the gradients of the velocities of the two species at the interface $\Gamma$ are not equal in general.

\section*{Acknowledgement}
PD holds a visiting professor association with the Department of
Mathematics, Imperial College London, UK. 
The authors thank Bertrand Bénazéraf for the stimulating discussions which raised the main questions addressed in this paper. 
MR thanks the CBI for their kind hospitality and support.
\bibliographystyle{plain}
\bibliography{ref}

\end{document}